\documentclass{article}
\usepackage{amsfonts,amsmath,amssymb,amsthm,amscd,latexsym,geometry}
\usepackage{graphicx}
\usepackage{color}

\newcommand{\R}{\mathbb{R}}
\newcommand{\PP}{\mathbb{P}}
\newcommand{\EE}{\mathbb{E}}
\newcommand{\Ba}{\mathcal{B}}
\newcommand{\e}{\varepsilon}
\newcommand{\D}{\Delta}
\newcommand{\ZZ}{\mathcal{N}}
\newcommand{\RR}{\mathcal{R}}

\newtheorem{theorem}{Theorem}
\newtheorem{criterion}{Criterion}

\newtheorem{proposition}{Proposition}

\title{Multiscale Piecewise Deterministic Markov Process in Infinite Dimension: Central Limit Theorem and Langevin Approximation} 

\author{
   A. Genadot\thanks{Laboratoire de Probabilit\'es et Mod\`eles Al\'eatoires, Universit\'e Pierre et Marie Curie, Paris 6, Case courrier 188, 4 Place Jussieu, 75252 Paris Cedex 05, France. This work has been supported by the Agence Nationale de la Recherche through the project MANDy, Mathematical Analysis of Neuronal Dynamics, ANR-09-BLAN-0008-01.} \thanks{email: algenadot@gmail.com}
   \and
   M. Thieullen$^*$\thanks{email: michele.thieullen@upmc.fr}
}

\date{Version 1}

\begin{document}



\maketitle
\begin{abstract}
In \cite{GT}, the authors addressed the question of the averaging of a slow-fast Piecewise Deterministic Markov Process (PDMP) in infinite dimension. In the present paper, we carry on and complete this work by the mathematical analysis of the fluctuation of the slow-fast system around the averaged limit. A central limit theorem is derived and the associated Langevin approximation is considered. The motivation of this work is a stochastic Hodgkin-Huxley model which describes the propagation of an action potential along the nerve fiber. We study this PDMP in detail and provide more general results for a class of Hilbert space valued PDMP.
\end{abstract}
\section{Introduction}

In \cite{GT}, the authors addressed the question of the averaging for the multiscale stochastic Hodgkin-Huxley model. This model describes the evolution of an action potential or nerve impulse along the nerve axon of a neuron with a finite number of channels which display stochastic gating mechanisms. Mathematically, this stochastic Hodgkin-Huxley model belongs to the class of Piecewise Deterministic Markov Processes (PDMP) with multiple time scales. In \cite{GT} we derived averaging results for this class of models. The averaged model is still a PDMP of lower dimension. In the present paper, we study the fluctuation of the original slow-fast system around its averaged limit. A central limit theorem is derived and the associated Langevin approximation is considered. A numerical example is also provided at the end of the paper.\\
The mathematical analysis of PDMP, and more generally of hybrid systems, constitutes a very active area of current research since a few years. An hybrid system can be defined as a dynamical system describing the interactions between a continuous macroscopic dynamic and a discrete microscopic one. For PDMPs, between the jumps, the motion of the macroscopic component is given by a deterministic flow. It is essentially this fact that gives the PDMPs their most important peculiarity as hybrid systems: they enjoy the Markov property.\\
The PDMPs, also called markovian hybrid systems, have been introduced by Davis in \cite{Davis1,Davis2} for the finite dimensional setting and generalized in \cite{BR} to cover the infinite dimensional case. Recently, the asymptotic behavior of PDMPs have been investigated in \cite{BLMZ1,BLMZ2,CDMR,T}, limit theorems for infinite dimensional PDMPs in \cite{RTW}, control problems in \cite{CD_C1,CD_C2,Goreac}, numerical methods in \cite{R,Sap}, time reversal in \cite{LP} and to end up this list with no claim of completeness, estimation of the jump rates for PDMPs in \cite{Azais}.\\ 
Hybrid systems are the object of great attention because they offer an accurate description of a large class of phenomena arising in various domains such as physics or biology. For example, in mathematical neuroscience, a domain the authors are more particularly interested in, PDMP models arise naturally in the description of the propagation of the nerve impulse, see \cite{Austin, BR}. This mathematical description has been proved to be consistent with classical models such as the Hodgkin-Huxley model and compartment type models, see \cite{RTW}.\\
In this paper, the authors are interested in the question of averaging for a spatially extended PDMP model of propagation of the nerve impulse and corresponding fluctuations. Averaging is of first importance because it allows to simplify the dynamic of a system which contains intrinsically two different time-scales. Moreover, the averaged limit preserves the qualitative behavior of the original dynamical system, see \cite{PS,YZ}. In the finite dimensional case these questions have been addressed in \cite{Wainrib2} and \cite{Fa_Cri}.\\
Let us mention at this point that the study of slow-fast systems of Stochastic Partial Differential Equations (SPDEs), a framework  different from ours but very instructive, is an area of very active research. Averaging results have been derived in \cite{CF1, RW} and fluctuation around the limit and large deviations have been studied in \cite{DRW}.\\
The paper is organized as follows. Section \ref{Not} gathers the main notations in use throughout the text. In Section \ref{Mod} and \ref{Sing} we recall as briefly as possible the model and the main results of \cite{GT} and in particular the different properties of the averaged process. Section \ref{Main} introduces the main results of the present paper: the central limit theorem and the attached Langevin approximation are stated. The description of the general class of PDMP which can be included in our framework is described. (Section \ref{GF}). In Section \ref{Preuve}, we begin by proving the Central Limit Theorem in the so-called all fast case before considering the multi-scale case, this is Section \ref{All} and \ref{Multi}. We divide the proof in the all fast case in two parts: the tightness in Section \ref{Tight} and the identification of the limit in Section \ref{Id}. Properties of the diffusion operator related to the fluctuations are investigated in Section \ref{Diff_op}. In Section \ref{Langevin}, the Langevin approximation associated to the averaged model and its fluctuations is considered. A numerical example is presented in Section \ref{Exemple}.
\subsection{Notations}\label{Not}

Let $I=[0,1]$. $L^2(I)$ is the space of measurable and squared integrable functions. It is a Hilbert space endowed with the usual scalar product
\[
(f,g)_{L^2(I)}=\int_I f(x)g(x)dx
\]
and norm $\|\cdot\|^2_{L^2(I)}=(\cdot,\cdot)_{L^2(I)}$. $H=H^1_0(I)$ denotes the completion of the set of $\mathcal{C}^\infty$ functions with compact support on $I$ with respect to the norm $\|\cdot\|_H$ defined by
\[
\|f\|_H=\sqrt{\int_I (f(x))^2+(f'(x))^2dx}
\]
H is also a Hilbert space and we denote its scalar product simply by $(\cdot,\cdot)$. A Hilbert basis of $H$ (resp. $L^2(I)$) is given by the following functions on $I$
\[
e_k(\cdot)=\frac{\sqrt2}{\sqrt{1+(k\pi)^2}}\sin(k\pi \cdot)\quad (\text{resp. } f_k(\cdot)=\sqrt2\sin(k\pi \cdot))
\]
for $k\geq1$. The dual space of $H$ which is $H^{-1}$ is denoted by $H^*$. $<\cdot,\cdot>$ is the duality pairing between $H$ and $H^*$. The triple of Banach spaces $H\subset L^2(I)\subset H^*$ is an evolution triple or Gelfand triple. The embeddings in between these three spaces are continuous and dense.  For any $h\in L^2(I)$ and any $u\in H$: $ <h,u>=(h,u)_{L^2(I)}$ and for any $x\in I$, $k\geq1$
\[
<\delta_x,e_k>=(1+(k\pi)^2)e_k(x)
\]
The embedding $H\subset \mathcal{C}(I,\R)$ also holds and we denote by $C_P$ the constant such that, for all $u\in H$
\[
\sup_{I}|u|\leq C_P\|u\|_H
\]
We refer the reader to \cite{Henry}, Chapter 1, Section 1.3 for more details.\\ 
In $H$, the Laplacian with zero Dirichlet boundary conditions has the following spectral decomposition
\begin{equation*}
\D u=-\sum_{k\geq1}(k\pi)^2(u,e_k)e_k
\end{equation*}
for $u$ in the domain $\mathcal{D}(\D)=\{u\in H;\sum_{k\geq1}k^4 (u,e_k)^2<\infty\}$. It generates the semi-group of operators $\{e^{\D t},t\geq0\}$ defined for $u\in H$ by
\[
e^{\D t}u=\sum_{k\geq1}e^{-(k\pi)^2t}(u,e_k)e_k
\]
We say that a function $f:H\mapsto\R$ has a Fr\'echet derivative in $u\in H$ if there exists a bounded linear operator $T_u:H\mapsto \R$ such that
\[
\lim_{h\to 0}\frac{f(u+h)-f(u)-T_u(h)}{\|h\|_H}=0
\]
We then write $\frac{df}{du}(u)$ for the operator $T_u$. For example, the square of the $\|\cdot\|_H$-norm and the Dirac distribution in $x\in I$ are Fr\'echet differentiable on $H$. For all $u\in H$
\[
\frac{d\|\cdot\|^2_H}{du}(u)[h]=2(u,h),\quad \frac{d\delta_x}{du}(u)[h]=h(x)
\]
for all $h\in H$. In the same way, we can define the Fr\'echet derivative of order $2$. The second Fr\'echet derivative of a twice Fr\'echet differentiable function $f:H\to\R$ is denoted by $\frac{d^2f}{du^2}(u)$. It can be considered as a bilinear form on $H\times H$. For instance
\[
\frac{d^2\|\cdot\|^2_H}{du^2}(u)[h,k]=2(h,k),\quad \frac{d^2\delta_x}{du^2}(u)[h,k]=0,
\]
for all $(h,k)\in H\times H$. Fr\'echet differentiation is stable by summation and multiplication.

\subsection{The model}\label{Mod}

In this section, we introduce the multiscale stochastic Hodgkin-Huxley model. This model was first considered in \cite{Austin}, and later in \cite{BR,GT, RTW}. Although we are interested in the multi scale stochastic Hodgkin-Huxley model, we start by describing the model that does not display different time scales, for the sake of clarity. The spatially extended  stochastic Hodgkin-Huxley model describes the propagation of an action potential along an axon at the scale of the ion channels. The axon, or nerve fiber, is the component of a neuron which allows the propagation of an incoming signal from the soma to another neuron on long distances. The length of the axon is large relative to its radius, thus, for mathematical convenience, we consider the axon as a segment $I$. We choose here $I=[0,1]$. All along the axon are the ion channels which amplify and allow the propagation of the incoming impulse. We assume that there are $N\geq1$ ion channels along the axon at the locati!
 ons $z_i\in I$ for $i\in\ZZ\subset]0,1[$ with $|\ZZ|=N$. $\ZZ$ is thus a finite set. In \cite{Austin,GT} for instance, $\ZZ=\mathbb{N}\cap{N\mathring{I}}$, which means that the ion channels are regularly spaced. Each ion channel can be in a state $\xi\in E$ where $E$ is a finite state space, for instance, in the Hodgkin-Huxley model, a state can be: "receptive to sodium ions and open". When a ion channel is open, it allows some ionic species to enter or leave the cell, generating in this way a current. For a greater insight into the underlying biological phenomena governing the model, the authors refer to \cite{Hille}.\\
The ion channels switch between states according to a continuous time Markov chain whose jump intensities depend on the local potential of the axon membrane. For two states $\xi,\zeta\in E$ we define by $\alpha_{\xi,\zeta}$ the jump intensity or transition rate function from the state $\xi$ to the state $\zeta$. It is a real valued function of a real variable supposed to be, as its derivative, Lipschitz-continuous. We assume moreover that: $0\leq\alpha_{\xi,\zeta}\leq\alpha^+$  for any $\xi,\zeta\in E$ and either $\alpha_{\xi,\zeta}$ is constant equal to zero or is strictly positive bounded below by a strictly positive constant $\alpha_-$. That is, the non-zero rate functions are bounded below and above by strictly positive constants. For a given channel, the rate function describes the rate at which it switches from one state to another.\\
A possible configuration of all the $N$ ion channels is denoted by $r=(r(i),i\in\ZZ)$, a point in the space of all configurations $\RR=E^{\ZZ}$: $r(i)$ is the state of the channel located at $z_i$, for $i\in\ZZ$. The channels, or stochastic processes $r(i)$, are supposed to evolve independently over infinitesimal time-scales. Denoting by $u_t(z_i)$ the local potential at point $z_i$ at time $t$, we have
\begin{equation}\label{saut}
\PP(r_{t+h}(i)=\zeta|r_t(i)=\xi)=\alpha_{\xi,\zeta}\left(u_t\left(z_i\right)\right)h+o(h)
\end{equation}
For any $\xi\in E$ we also define the maximal conductance $c_\xi$ and the steady state potentials, or driven potentials, $v_\xi$ of a channel in state $\xi$ which are both  constants, the first being non negative.\\
The transmembrane potential $u_t(x)$, that is the difference of electrical potential between the outside and the inside of the axon, evolves according to the following hybrid reaction-diffusion PDE
\begin{equation}\label{dyn}
\partial_t u_t=\nu\Delta u_t +\frac{1}{N}\sum_{i\in \ZZ}c_{r_t(i)}(v_{r_t(i)}-u_t(z_i))\delta_{z_i}
\end{equation}
We assume zero Dirichlet boundary conditions for this PDE (clamped axon). The positive constant $\nu$ is the intensity of the diffusion part of the above PDE, for simplicity in the notations we assume that $\nu=1$. We are interested in the process $(u_t,r_t)_{t\in[0,T]}$. The applied current given by the reaction term of the PDE is denoted in the sequel by $G_r(u)$ and and satisfies
\begin{equation}
G_r(u)=\frac{1}{N}\sum_{i\in \ZZ}c_{r(i)}(v_{r(i)}-u(z_i))\delta_{z_i}\in H^*
\end{equation}
for $(u,r)\in H\times\RR$. The following result of \cite{Austin} states that there exists a stochastic process satisfying equations (\ref{dyn}) and (\ref{saut}). Let $u_0$ be in $H$ such that $\min_{\xi\in E}v_\xi\leq u_0\leq\max_{\xi\in E}v_\xi$, the initial potential of the axon. Let $q_0\in\RR$ be the initial configuration of the ion channels.
\begin{proposition}[\cite{Austin},\cite{BR}]\label{prop_A1}
Fix $N\geq 1$ and let $(\Omega, \mathcal{F},(\mathcal{F}_t)_{0\leq t\leq T},\PP)$ be a filtered probability space satisfying the usual conditions. There exists a pair $(u_t,r_t)_{0\leq t\leq T}$ of c\`adl\`ag adapted stochastic processes such that each sample path of $u$ is in $\mathcal{C}([0,T],H)$ and $r_t$ is in $\RR$ for all $t\in[0,T]$ and $(u_t,r_t)_{0\leq t\leq T}$ satisfies (\ref{dyn}-\ref{saut}). Moreover  $(u_t,r_t)_{0\leq t\leq T}$ is a so called Piecewise Deterministic Markov Process.
\end{proposition}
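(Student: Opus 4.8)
The plan is to construct $(u_t,r_t)$ by the classical piecewise–deterministic recipe of Davis: with the channel configuration frozen, flow the potential according to the deterministic PDE; wait a random time governed by the instantaneous switching rates; then let a single channel jump; and iterate. The assertion that the resulting object is a PDMP — in particular the Markov property — will then follow from Davis' general construction theorem in the Hilbert-space form of \cite{BR}, once three ingredients are in place: (i) for each fixed $r\in\RR$ the reaction–diffusion equation (\ref{dyn}) generates a well-defined, globally defined continuous semiflow on $H$; (ii) the total switching intensity is bounded and continuous on $H\times\RR$; and (iii) the jump times do not accumulate on $[0,T]$.

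For (i), fix $r\in\RR$. Because $I$ is one–dimensional we have the embedding $H\hookrightarrow\mathcal C(I,\R)$ with constant $C_P$, so every point evaluation $u\mapsto u(z_i)$ is a bounded linear functional on $H$, $\delta_{z_i}\in H^*$, and $G_r:H\to H^*$ is affine, $G_r(u)=b_r-L_r u$ with $b_r\in H^*$ and $L_r\in\mathcal L(H,H^*)$, the norms of $b_r$ and $L_r$ being bounded uniformly in $r$. Writing (\ref{dyn}) in mild form
\[
u_t=e^{\Delta t}u_0+\int_0^t e^{\Delta(t-s)}G_r(u_s)\,ds,
\]
I would first establish from the spectral series of $\{e^{\Delta t}\}$ the smoothing bound $\|e^{\Delta \tau}\delta_{z}\|_H\le C\tau^{-3/4}$ for $z\in I$ — an \emph{integrable} singularity at $\tau=0$, coming from $\sum_k (k\pi)^2 e^{-2(k\pi)^2\tau}\sim \tau^{-3/2}$ — whence $\|e^{\Delta\tau}G_r(u)\|_H\le C(1+\|u\|_H)\tau^{-3/4}$. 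A fixed–point argument on a small time interval, together with the affine dependence on $u$ and the singular Gr\"onwall lemma (\cite{Henry}, Chapter 1), yields a unique mild solution $u_t=\Phi_t^r(u_0)$, defined for all $t\in[0,T]$ since the equation is affine; one also reads off that $t\mapsto\int_0^t e^{\Delta(t-s)}G_r(u_s)\,ds$ is continuous into $H$, so $\Phi^r_\cdot(u_0)\in\mathcal C([0,T],H)$, and that $(t,u_0)\mapsto\Phi_t^r(u_0)$ is jointly continuous. Finally, as in \cite{Austin}, the sign structure of the forcing gives the a priori bound $\min_{\xi\in E}v_\xi\le\Phi_t^r(u_0)\le\max_{\xi\in E}v_\xi$ on the physiologically relevant initial data.

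For (ii)–(iii), the total intensity is $\lambda(u,r)=\sum_{i\in\ZZ}\sum_{\zeta\in E}\alpha_{r(i),\zeta}(u(z_i))$; by the standing assumptions on the rate functions it is bounded by $N|E|\alpha^+$, and it is continuous on $H\times\RR$ because the $\alpha_{\xi,\zeta}$ and the maps $u\mapsto u(z_i)$ are continuous. Let $Q$ be the transition kernel that leaves $u$ unchanged and replaces one coordinate $r(i)$ by $\zeta$ with probability proportional to $\alpha_{r(i),\zeta}(u(z_i))$. Starting from $(u_0,q_0)$, let the first jump time have survivor function $t\mapsto\exp\bigl(-\int_0^t\lambda(\Phi_s^{q_0}(u_0),q_0)\,ds\bigr)$, apply $Q$ at that time, and restart from the new state; the uniform bound on $\lambda$ lets one dominate the successive jump times by those of a rate-$N|E|\alpha^+$ Poisson process, so a.s. only finitely many jumps occur on $[0,T]$ and the construction produces a process on the whole interval. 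By construction $r$ is c\`adl\`ag, piecewise constant and $\RR$-valued, $u\in\mathcal C([0,T],H)$ (only the forcing term of (\ref{dyn}) is discontinuous at a jump, while the mild formula keeps $u$ continuous), and the pair solves (\ref{dyn})–(\ref{saut}). That the data $(\Phi,\lambda,Q)$ defines a strong Markov PDMP is then Davis' theorem in the infinite-dimensional formulation of \cite{BR}.

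The only genuinely analytic obstacle is step (i): proving that, despite the Dirac forcing, the convolution term lives in $\mathcal C([0,T],H)$, which rests on the smoothing estimate above and hence on the one–dimensionality of $I$ (so that $\delta_z\in H^*$ and $H\hookrightarrow\mathcal C(I)$). Everything else is the standard PDMP bookkeeping, and since the statement is quoted from \cite{Austin,BR}, the proof amounts to recalling this construction and checking the three hypotheses (i)–(iii).
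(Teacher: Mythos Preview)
Your proposal is correct and follows precisely the \cite{BR} route that the paper cites (and does not itself reprove): verify the Davis/Buckwar--Riedler hypotheses --- global continuous semiflow for each frozen $r$, bounded continuous total intensity, non-accumulation of jumps --- and invoke the infinite-dimensional PDMP construction. The paper also records the alternative \cite{Austin} argument (Schaeffer fixed point for the PDE at unit jump rate, then Girsanov to recover the true rates), but your Davis-style construction is the one that yields the PDMP structure and generator directly, which is what the paper uses downstream.
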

\noindent The existence of a stochastic process solution of (\ref{dyn}) and (\ref{saut}) has been first proved in \cite{Austin}. In this paper, the author uses the Schaeffer fixed point theorem to show that when the jump process $r$ jumps at rate $1$, there exists a solution to (\ref{dyn}) and uses the Girsanov theorem for c\'adl\'ag processes with finite state space to recover the dynamic of $r$. Another approach has been developed in \cite{BR}. In this paper, the authors construct explicitly the process $(u,r)$ as a piecewise deterministic Markov process generalizing in this way the theory of piecewise deterministic Markov process developed by Davis from the finite to the infinite dimension setting, see \cite{Davis1,Davis2}. The authors in \cite{BR} prove that their process is markovian and moreover characterize its generator. Another approach based on the marked point process theory is also possible, see for instance \cite{J} and the extension to our framework in \cite{RTW}.\\
We proceed now by recalling the form of the generator of the process $(u,r)$. For $(u_0,r)\in H\times \RR$, we denote by $(\psi_{r}(t,u_0),t\in[0,T])$ the unique solution starting from $u_0$ of the PDE
\begin{equation}\label{dyn:fix}
\partial_t u_t=\Delta u_t +\frac{1}{N}\sum_{i\in \ZZ}c_{r(i)}(v_{r(i)}-u_t(z_i))\delta_{z_i}
\end{equation}
with zero Dirichlet boundary conditions. 
\begin{proposition}
Let $f$ be a locally bounded measurable function on $H\times\RR$ such that the map $t\mapsto f(\psi_r(t,u_0),r)$ is continuous for all $(u_0,r)\in H\times\RR$. Then $f$ is in the domain $\mathcal{D}(\mathcal{A})$ of the extended generator of the process $(u,r)$. The extended generator is given for almost all $t$ by
\begin{equation}\label{gene}
\mathcal{A}f(u_t,r_t)=\frac{d}{dt}f(u_\cdot,r_t)(t)+\Ba(u_t) f(u_t,\cdot)(r_t)
\end{equation}
where
\[
\Ba(u_t) f(u_t,\cdot)(r_t)=\sum_{i\in\ZZ}\sum_{\zeta\in E}[f(u_t,r_t(r_t(i)\to\zeta))-f(u_t,r_t)]\alpha_{r_t(i),\zeta}(u_t(z_i))
\]
with $r_t(r_t(i)\to\zeta)$ is the element of $\RR$ with $r_t(r_t(i)\to\zeta)(j)$ equal to $r_t(j)$ if $j\neq i$ and to $\zeta$ if $j=i$. The notation $\frac{d}{dt}f(u_\cdot,r_t)(t)$ means that the function $s\mapsto f(u'_s,r)$ is differentiated at $s=t$, where $u'$ is the solution of the PDE (\ref{dyn:fix}) with the channel state $r_t$ held fixed equal to $r$.
\end{proposition}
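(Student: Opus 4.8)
The plan is to verify directly the defining property of the extended generator. Recall that $f\in\mathcal{D}(\mathcal{A})$ with $\mathcal{A}f=h$ means: $h$ is measurable, $t\mapsto h(u_t,r_t)$ is a.s.\ locally integrable, and $M^f_t:=f(u_t,r_t)-f(u_0,r_0)-\int_0^t h(u_s,r_s)\,ds$ is a local martingale for the filtration $(\mathcal{F}_t)$. We take $h$ to be the right-hand side of (\ref{gene}). Three things then have to be checked: that $h$ is well defined for a.a.\ $t$ and locally integrable along the trajectory, that $M^f$ enjoys the martingale property, and that $M^f$ can be localized when $f$ is merely locally bounded. This is, in essence, the general PDMP result of Davis (see \cite{Davis1,Davis2}) and \cite{BR} specialized to the model (\ref{dyn}--\ref{saut}), so the task is mainly to match our data with the general framework.

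The key structural input is the piecewise-deterministic description of $(u,r)$ from Proposition \ref{prop_A1}: there is an increasing sequence of jump times $0=T_0<T_1<T_2<\cdots$, a.s.\ with only finitely many $T_n$ in $[0,T]$ (immediate, since the total jump rate $\lambda(u,r)=\sum_{i\in\ZZ}\sum_{\zeta\in E}\alpha_{r(i),\zeta}(u(z_i))$ is bounded by $N|E|\alpha^+$); on each $[T_n,T_{n+1})$ one has $r_t=r_{T_n}$ and $u_t=\psi_{r_{T_n}}(t-T_n,u_{T_n})$; the $H$-valued path $u$ is continuous, so the jumps of $(u,r)$ are carried by the $\RR$-component only and $u_{T_n^-}=u_{T_n}$; and at a jump exactly one channel changes state, the channel $i$ and its new state $\zeta$ being selected with probability $\alpha_{r_{T_n^-}(i),\zeta}(u_{T_n}(z_i))/\lambda(u_{T_n},r_{T_n^-})$. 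On $[T_n,T_{n+1})$ the map $s\mapsto f(u_s,r_{T_n})$ is continuous by hypothesis; once one knows it is differentiable a.e.\ with derivative $\frac{d}{ds}f(u_\cdot,r_s)(s)$ and that the fundamental theorem of calculus applies, we get $f(u_t,r_t)-f(u_{T_n},r_{T_n})=\int_{T_n}^t\frac{d}{ds}f(u_\cdot,r_s)(s)\,ds$ on that interval, and telescoping over successive intervals yields
\[
f(u_t,r_t)-f(u_0,r_0)=\int_0^t\frac{d}{ds}f(u_\cdot,r_s)(s)\,ds+\sum_{n\geq1,\,T_n\leq t}\bigl[f(u_{T_n},r_{T_n})-f(u_{T_n},r_{T_n^-})\bigr].
\]

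It remains to compensate the jump sum. Conditionally on $\mathcal{F}_{T_n^-}$, the increment $f(u_{T_n},r_{T_n})-f(u_{T_n},r_{T_n^-})$ has conditional expectation $\sum_{i\in\ZZ}\sum_{\zeta\in E}\bigl[f(u_{T_n},r_{T_n^-}(r_{T_n^-}(i)\to\zeta))-f(u_{T_n},r_{T_n^-})\bigr]\,\alpha_{r_{T_n^-}(i),\zeta}(u_{T_n}(z_i))/\lambda(u_{T_n},r_{T_n^-})$, and, given the path of $u$, the jump times form a point process with intensity $\lambda(u_t,r_t)$. The classical L\'evy-system / marked-point-process argument (as in \cite{Davis2,J}, used in \cite{BR,RTW}) then shows that
\[
\sum_{n\geq1,\,T_n\leq t}\bigl[f(u_{T_n},r_{T_n})-f(u_{T_n},r_{T_n^-})\bigr]-\int_0^t\Ba(u_s)f(u_s,\cdot)(r_s)\,ds
\]
is a local martingale, with predictable compensator exactly the $\Ba$-term of (\ref{gene}). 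Adding the two displays identifies $M^f$ with this local martingale, hence $f\in\mathcal{D}(\mathcal{A})$ with $\mathcal{A}f$ given by (\ref{gene}). Local boundedness of $f$, together with boundedness of the rates and finiteness of $\ZZ$, gives integrability of $s\mapsto\Ba(u_s)f(u_s,\cdot)(r_s)$ along the path and allows the standard localization by the stopping times $T_n\wedge\inf\{t:\|u_t\|_H>m\}$ to pass from bounded to locally bounded $f$.

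The main difficulty — and the only genuinely non-formal point — is the inter-jump step: upgrading the assumed continuity of $t\mapsto f(\psi_r(t,u_0),r)$ to the a.e.\ differentiability in $t$, together with validity of the fundamental theorem of calculus, that is needed to write $f(u_t,r_t)-f(u_{T_n},r_{T_n})$ as $\int\frac{d}{ds}f(u_\cdot,r_s)(s)\,ds$. This is precisely the content hidden behind the words ``for almost all $t$'': one uses the smoothing properties of the flow $\psi_r$, solution of the parabolic problem (\ref{dyn:fix}), and the chain rule for the Fr\'echet derivative to identify $\frac{d}{ds}f(u_\cdot,r_s)(s)$ at Lebesgue-a.e.\ $s$. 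All the remaining bookkeeping — finiteness of the number of jumps, measurability of $h$, the martingale property of the compensated jump sum — follows routinely from the PDMP structure recalled above and from the bounds assumed on the transition rates $\alpha_{\xi,\zeta}$.
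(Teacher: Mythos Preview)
The paper does not prove this proposition; it is stated without proof as a specialization of the general PDMP machinery developed in \cite{Davis1,Davis2} for finite dimensions and extended to the present infinite-dimensional setting in \cite{BR} (see the discussion immediately following Proposition~\ref{prop_A1}). Your argument is exactly the standard Davis route used in those references: decompose along the jump times $T_n$, use the deterministic flow $\psi_r$ on each inter-jump interval, and compensate the sum of jump increments via the L\'evy-system/compensator identity. So methodologically you are aligned with what the cited literature does, and your handling of the jump part --- bounded total rate, finitely many jumps on $[0,T]$, compensator equal to the $\Ba$-term --- is correct.

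The one point that deserves a sharper treatment is the inter-jump step, which you yourself flag. In Davis's characterization of the extended domain (and in \cite{BR}, Theorem~4), the hypothesis is that $t\mapsto f(\psi_r(t,u_0),r)$ be \emph{absolutely continuous}, not merely continuous; absolute continuity is what guarantees a.e.\ differentiability together with the validity of the fundamental theorem of calculus, which you need to write $f(u_t,r_t)-f(u_{T_n},r_{T_n})=\int_{T_n}^t \frac{d}{ds}f(u_\cdot,r_s)(s)\,ds$. Mere continuity does not give this, and invoking ``smoothing properties of the parabolic flow'' does not close the gap for a general locally bounded $f$: the flow $\psi_r(\cdot,u_0)$ may be smooth in $H$, but without Fr\'echet differentiability of $f$ (or some other structure) you cannot apply a chain rule to deduce absolute continuity of the composition. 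The statement in the paper is slightly loose on this point; the honest fix is either to strengthen the hypothesis to absolute continuity along the flow (as in \cite{BR}) or to restrict to $f$ that are $\mathcal{C}^1$ in the $H$-variable, in which case your chain-rule argument goes through cleanly. With that amendment, your proof is complete and matches the literature.
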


\medskip

\noindent Let us mention at this point that in Section \ref{Langevin} we will work with a slightly different model where the Dirac distributions $\delta_{z_i}$ are replaced by approximations $\phi_{z_i}$ in the sense of distributions, in the same way as in so called compartment models (see Section \ref{Main} for more details). In this model the reaction term is given by
\[
G_{r,\phi}(u)=\frac{1}{N}\sum_{i\in \ZZ}c_{r(i)}(v_{r(i)}-(u,\phi_{z_i})_{L^2(I)})\phi_{z_i}
\]
for $(r,u)\in\RR\times L^2(I)$. For $i\in\ZZ$, the function $\phi_{z_i}$ which belongs to $L^2(I)$ approximates the Dirac distribution $\delta_{z_i}$. Replacing $\delta_{z_i}$ by $\phi_{z_i}$ corresponds to consider that when the channel located at $z_i$ is open and allows a current to pass, not only the voltage at the point $z_i$ is affected, but also the voltage on a small area around $z_i$ (see \cite{BR}). The family of functions $\phi_{z_i}$  is indexed by a parameter $\kappa$ related to the membrane area considered: the smaller $\kappa$, the smaller the area. When $u$ is held fixed, the dynamic of the ion channel at location $z_i$ is given by
\begin{equation}\label{saut:phi}
\PP(r_{t+h}(i)=\zeta|r_t(i)=\xi)=\alpha_{\xi,\zeta}\left((u,\phi_{z_i})_{L^2(I)}\right)h+o(h)
\end{equation}
for $\xi,\zeta\in E$ and $t,h\geq0$. 

\noindent All the results stated for the model with the Dirac mass $\delta_{z_i}$ hold also true for the model with the approximations $\phi_{z_i}$. We prefer to work in a first part with the model introduced above (that is to say with the Dirac distribution) because it corresponds exactly to the model studied in \cite{Austin, GT}. However, when considering the Langevin approximation associated to the central limit theorem, the formulation with the mollifier $\phi_{z_i}$ appears to be more tractable.

\subsection{Singular perturbation and previous averaging results}\label{Sing}

In this section, we introduce a slow-fast dynamic in the stochastic Hodgkin-Huxley model: some states of the ion channels communicate faster between each other than others. This is biologically relevant as remarked for example in \cite{Hille}. Mathematically, this leads to the introduction of an additional small parameter $\e>0$ in our previously described model: the states which communicate at a faster rate communicate at the previous rate $\alpha_{\xi,\zeta}$ divided by $\e$. For an introduction on slow-fast system, we refer to \cite{PS}, for a general theory of slow-fast continuous time Markov chain, see \cite{YZ} and for the case of slow-fast systems with diffusion, see \cite{BG}.\\
We make a partition of the state space $E$ according to the different orders in $\e$ of the rate functions
\[
E=E_1\sqcup\cdots\sqcup E_l
\]
where $l\in\{1,2,\cdots\}$ is the number of classes. Inside a class $E_j$, the states communicate faster at jump rates of order $\frac1\e$. States of different classes communicate at the usual rate of order $1$. For $\e>0$ fixed, we denote by $(u^\e,r^\e)$ the modification of the PDMP introduced in the previous section with now two time scales. Its generator is, for $f\in\mathcal{D}(\mathcal{A^\e})$
\begin{equation}\label{gene_eps}
\mathcal{A^\e}f(u^\e_t,r^\e_t)=\frac{d}{dt}f(u^\e_\cdot,r^\e_t)(t)+\Ba^\e (u^\e_t)f(u^\e_t,\cdot)(r^\e_t)
\end{equation}
$\Ba^\e$ is the component of the generator related to the continuous time Markov chain $r^\e$. According to (\ref{gene}) and our slow-fast description we have the two time scales decomposition of this generator
\begin{equation*}\label{gene_saut}
\Ba^\e =\frac1\e\Ba+\hat\Ba
\end{equation*}
where the "fast" generator $\Ba$ is given by
\begin{equation*}\label{gene_fast}
\Ba(u^\e_t) f(u^\e_t,r^\e_t)=\sum_{i\in\ZZ}\sum_{j=1}^l1_{E_j}(r^\e_t(i))\sum_{\zeta\in E_j}[f(u^\e_t,r^\e_t(r^\e_t(i)\to\zeta))-f(u^\e_t,r^\e_t)]\alpha_{r^\e_t(i),\zeta}(u^\e_t(z_i))
\end{equation*}
and the "slow" generator $\hat\Ba$ is given by
\begin{equation*}\label{gene_slow}
\hat\Ba(u^\e_t) f(u^\e_t,r^\e_t)=\sum_{i\in\ZZ}\sum_{j=1}^l1_{E_j}(r^\e_t(i))\sum_{\zeta\notin E_j}[f(u^\e_t,r^\e_t(r^\e_t(i)\to\zeta))-f(u^\e_t,r^\e_t)]\alpha_{r^\e_t(i),\zeta}(u^\e_t(z_i))
\end{equation*}
For $y\in \R$ fixed and $g:\R\times E\to\R$, we denote by $\Ba_j(y)$, $j\in\{1,\cdots,l\}$ the following generator
\[
\Ba_j(y)g(\xi)=1_{E_j}(\xi)\sum_{\zeta\in E_j}[g(y,\zeta)-g(y,\xi)]\alpha_{\xi,\zeta}(y)
\]
For any $y\in \R$ fixed, and any $j\in\{1,\cdots,l\}$, we assume that the fast generator $\Ba_j(y)$ is weakly irreducible on $E_j$, i.e has a unique quasi-stationary distribution denoted by $\mu_j(y)$.  This quasi-stationary distribution is supposed to be Lipschitz-continuous in $y$, as well as its derivative.\\
Following \cite{YZ}, the states in $E_j$ can be considered as equivalent. For any $i\in\ZZ$ we define the new stochastic process $(\bar r_t^\epsilon)_{t\geq0}$ by $\bar r_t^\e(i)=j$ when $ r_t^\e(i)\in E_j$ and abbreviate $E_j$ by $j$ . We then obtain an aggregate process $\bar r^\e(i)$ with values in $\{1,\cdots,l\}$. This process is also often called the coarse-grained process. It is not a Markov process for $\e>0$ but a Markovian structure is recovered at the limit when $\e$ goes to $0$. More precisely, we have the following proposition
\begin{proposition}[\cite{YZ}]\label{YZ}
For any $y\in\R$, $i\in\ZZ$, the process $\bar r^\e(i)$ converges weakly when $\e$ goes to $0$ to a Markov process $\bar r(i)$ generated by
\[
\bar\Ba(y) g(\bar r(i))=\sum_{j=1}^l 1_{j}(\bar r(i))\sum_{k=1,k\neq j}^l (g(k)-g(j))\sum_{\xi\in E_j}\sum_{\zeta\in E_k}\alpha_{\zeta,\xi}(y)\mu_j(y)(\zeta)
\]
with $g:\{1,\cdots,l\}\to\R$ measurable and bounded.
\end{proposition}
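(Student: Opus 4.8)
My plan is to recognize the statement as a classical averaging (aggregation) result for singularly perturbed finite-state Markov chains and to prove it by the martingale problem method, the underlying fact being \cite{YZ}; I sketch a self-contained argument. Fix $y\in\R$ and $i\in\ZZ$. Once the voltage argument is frozen at $y$, the component $r^\e(i)$ is, for each $\e>0$, a time-homogeneous continuous-time Markov chain on the finite set $E$ with bounded generator
\[
Q^\e(y)=\tfrac{1}{\e}\,\Ba(y)+\hat\Ba(y),
\]
where $\Ba(y):=\sum_{j=1}^l\Ba_j(y)$ is the block-diagonal fast single-channel generator and $\hat\Ba(y)$ the complementary, class-changing slow one, both obtained by freezing the membrane potential at $y$. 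Write $\pi:E\to\{1,\dots,l\}$ for the lumping map, $\pi(\xi)=j$ for $\xi\in E_j$, so that $\bar r^\e_t(i)=\pi(r^\e_t(i))$.

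First I would establish tightness, which rests on the structural fact that $\Ba(y)$ annihilates every function of the form $g\circ\pi$ with $g:\{1,\dots,l\}\to\R$ (the fast transitions never change the class), a property stable under squaring since $(g\circ\pi)^2=g^2\circ\pi$. Hence $Q^\e(y)(g\circ\pi)=\hat\Ba(y)(g\circ\pi)$ and $Q^\e(y)((g\circ\pi)^2)=\hat\Ba(y)(g^2\circ\pi)$ are bounded uniformly in $\e$ (for instance by $2\|g\|_\infty\alpha^+|E|$ and $2\|g\|_\infty^2\alpha^+|E|$). Consequently $g(\bar r^\e_\cdot(i))$ is a semimartingale whose drift and predictable quadratic variation have densities bounded uniformly in $\e$, and since the state space $\{1,\dots,l\}$ is finite a standard tightness criterion (Aldous--Rebolledo) gives tightness of $\big(\bar r^\e(i)\big)_{\e>0}$ in $D([0,T],\{1,\dots,l\})$.

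Next I would identify the limit by a perturbed test function. Fix $g:\{1,\dots,l\}\to\R$. On each block $E_j$ the generator $\Ba_j(y)$ is weakly irreducible with quasi-stationary law $\mu_j(y)$, so by the Fredholm alternative on the finite set $E_j$ the Poisson equation $\Ba_j(y)h=\varphi$ is solvable precisely when $\sum_{\xi\in E_j}\mu_j(y)(\xi)\varphi(\xi)=0$. I would apply this block by block to the restriction of $-\hat\Ba(y)(g\circ\pi)$ recentred by its $\mu_j(y)$-average over $E_j$; that average is exactly the averaged between-class rate appearing in the statement, so the recentring constant on $E_j$ equals $\bar\Ba(y)g(j)$. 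Patching the block solutions into a bounded $g_1:E\to\R$ yields
\[
\Ba(y)g_1+\hat\Ba(y)(g\circ\pi)=\big(\bar\Ba(y)g\big)\circ\pi,
\]
whence, setting $f^\e:=g\circ\pi+\e\,g_1$ and using $\Ba(y)(g\circ\pi)=0$, one finds $Q^\e(y)f^\e=(\bar\Ba(y)g)\circ\pi+\e\,\hat\Ba(y)g_1$. Therefore
\[
f^\e(r^\e_t(i))-f^\e(r^\e_0(i))-\int_0^t\big[\bar\Ba(y)g(\bar r^\e_s(i))+\e\,\hat\Ba(y)g_1(r^\e_s(i))\big]\,ds
\]
is a martingale; since $f^\e\to g\circ\pi$ and $\e\,\hat\Ba(y)g_1\to0$ uniformly as $\e\to0$, passing to the limit along any weakly convergent subsequence of $\bar r^\e(i)$ shows its limit solves the martingale problem for the bounded generator $\bar\Ba(y)$ on $\{1,\dots,l\}$. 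That martingale problem is well posed, so together with tightness the family $\bar r^\e(i)$ converges weakly to the Markov chain generated by $\bar\Ba(y)$.

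I expect the only genuinely delicate point to be the solvability of the corrector equation, namely checking that the recentred driving term lies in the range of $\Ba(y)$; this is exactly where the weak irreducibility hypothesis and the quasi-stationary measures $\mu_j(y)$ are used, and it is what pins down the precise form of $\bar\Ba(y)$. Everything else is elementary and uniform in $\e$. One may of course instead simply invoke \cite{YZ}, whose aggregation theorem for singularly perturbed Markov chains gives the statement directly once the partition $E=E_1\sqcup\cdots\sqcup E_l$ and the quasi-stationary laws $\mu_j(y)$ are available.
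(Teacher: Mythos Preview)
The paper does not prove this proposition; it is quoted as a result of \cite{YZ} and no argument is given in the text. Your sketch is a correct and complete outline of the standard perturbed--test--function/martingale--problem proof of the aggregation theorem for singularly perturbed finite-state chains: the key observations that $\Ba(y)$ annihilates $g\circ\pi$, that the blockwise Poisson equation is solvable by weak irreducibility with recentring constant exactly $\bar\Ba(y)g(j)$, and that the resulting corrector $f^\e=g\circ\pi+\e g_1$ yields $Q^\e(y)f^\e=(\bar\Ba(y)g)\circ\pi+O(\e)$ are all right, and together with the elementary tightness and the well-posedness of the limiting martingale problem on $\{1,\dots,l\}$ they give the result. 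This is in fact the method underlying \cite{YZ}, so your approach and the cited reference coincide; there is nothing further in the paper to compare against.
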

\noindent In order to determine the complete limit when $\e$ goes to zero, we need to average the reaction term $G_r(u)$ against the quasi-invariant distributions. That is we consider that each cluster of states $E_j$ has reached its stationary behavior. For any $\bar r\in\bar\RR=\{1,\cdots,l\}^{\ZZ}$ we define the averaged function by
\begin{equation}\label{F}
F_{\bar r}(u)=\frac{1}{N}\sum_{i\in \ZZ}\sum_{j=1}^l 1_{j}(\bar r(i))\sum_{\zeta\in E_j}c_\zeta \mu_j(u(z_i))(\zeta)(v_\zeta-u(z_i))\delta_{z_i}
\end{equation}
Therefore, we call the following PDE 
\begin{equation}\label{Au_moy}
\partial_t u_t=\D u_t + F_{\bar r_t}(u_t)
\end{equation}
the {\it averaged equation} of (\ref{dyn}). We take zero Dirichlet boundary conditions and initial conditions $u_0$ and $\bar q_0$ where $\bar q_0$ is the aggregation of the initial channel configuration $q_0$. In equation (\ref{Au_moy}), the jump process $(\bar r_t)_{t\in[0,T]}$ evolves, each coordinate independently over infinitesimal time intervals, according to the averaged jump rates between the subsets $E_j$ of $E$. For $k$ and $j$ in $\{1,\cdots,l\}$, the average jump rate from class $E_j$ to class $E_k$ is given by
\begin{equation}
\bar \alpha_{jk}(y)=\sum_{\zeta\in E_j}\sum_{\xi\in E_k}\alpha_{\zeta,\xi}(y)\mu_j(y)(\zeta)
\end{equation}
We recall now the most important results of \cite{GT} and we refer the interested reader to this paper for proofs. The first important result is the uniform boundedness in $\e$ of the process $u^\e$.
\begin{proposition}[\cite{GT}]\label{Prop_bound}
For any $T>0$, there is a deterministic constant $C>0$ independent of $\e\in]0,1]$ such that
\begin{equation*}
\sup_{t\in[0,T]} \|u^\e_t\|_H\leq C
\end{equation*}
almost surely.
\end{proposition}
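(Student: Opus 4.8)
The plan is to work with the mild (variation-of-constants) formulation of the PDE satisfied by $u^\e$ between the jumps of $r^\e$ and to exploit the smoothing of the heat semigroup. The key point is to measure the reaction term not in $H^*=H^{-1}$ but in a slightly more regular negative-order Sobolev space $H^{-\theta}$, with $1/2<\theta<1$: on the one hand $H^{-\theta}$ still contains the finite sums of Dirac masses that make up $G_r(u)$, and on the other hand $e^{\D\tau}$ maps $H^{-\theta}$ into $H$ with an \emph{integrable} singularity at $\tau=0$, which would fail for $H^{-1}$. So first I would fix such a $\theta$ and introduce $H^{-\theta}$ through the spectral decomposition of $-\D$. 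Since between two consecutive jump times $u^\e$ solves $\partial_t u=\D u+G_r(u)$ with $r\in\RR$ frozen, and $u^\e$ has continuous $H$-valued paths (Proposition \ref{prop_A1}), gluing these pieces over the a.s.\ finitely many jump times in $[0,T]$ gives, almost surely and for every $t\in[0,T]$,
\[
u^\e_t=e^{\D t}u_0+\int_0^t e^{\D(t-s)}G_{r^\e_s}(u^\e_s)\,ds .
\]
Note that $\e$ enters this identity only through the c\`adl\`ag $\RR$-valued path $s\mapsto r^\e_s$, not through the semigroup nor through $G$; this will force all the constants below to be independent of $\e$.

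Then I would prove the two estimates that make the scheme work. First, since $I$ is one-dimensional the embedding $H^\theta(I)\hookrightarrow\mathcal{C}(I)$ holds, hence $\delta_x\in H^{-\theta}$ for every $x\in I$ with $\sup_{x\in I}\|\delta_x\|_{H^{-\theta}}<\infty$; combining this with $\sup_I|u|\le C_P\|u\|_H$ and with the boundedness of the finitely many constants $c_\xi\ge0$ and $v_\xi$ gives
\[
\|G_r(u)\|_{H^{-\theta}}\le C_1+C_2\|u\|_H\qquad\text{for all }(u,r)\in H\times\RR ,
\]
with $C_1,C_2$ depending only on the finite state space $E$ and on the model constants. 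Second, by analyticity of $\{e^{\D t}\}$ on the scale of fractional powers of $-\D$ one has the smoothing estimate
\[
\|e^{\D\tau}\|_{\mathcal{L}(H^{-\theta},H)}\le C_\theta\,\tau^{-(1+\theta)/2},\qquad\tau>0 ,
\]
whose exponent $(1+\theta)/2$ is strictly less than $1$ precisely because $\theta<1$, so that $\tau\mapsto\tau^{-(1+\theta)/2}$ is integrable near $0$.

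Combining these with the contraction bound $\|e^{\D t}u_0\|_H\le\|u_0\|_H$, I would get for every $t\in[0,T]$ an inequality of the form
\[
\|u^\e_t\|_H\le a+b\int_0^t(t-s)^{-(1+\theta)/2}\|u^\e_s\|_H\,ds ,
\]
with $a=\|u_0\|_H+C_\theta C_1\int_0^T\tau^{-(1+\theta)/2}\,d\tau<\infty$ and $b=C_\theta C_2$ deterministic and independent of $\e$, and with $t\mapsto\|u^\e_t\|_H$ finite on $[0,T]$ by Proposition \ref{prop_A1}. Since $(1+\theta)/2<1$, the generalized (weakly singular) Gronwall inequality, see e.g.\ \cite{Henry}, then bounds $t\mapsto\|u^\e_t\|_H$ on $[0,T]$ by a deterministic constant depending only on $a$, $b$, $\theta$, $T$, hence independent of $\e\in\,]0,1]$; taking the supremum over $t$ finishes the proof.

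The step I expect to be the real obstacle is the choice of functional setting for the reaction term. Because $G_r(u)$ is a finite sum of Dirac masses it does not belong to $L^2(I)$, so a naive energy estimate in $H$ is ruled out: pairing the equation with $-\D u$ would need $u\in H^2$, which fails since the Dirac sources create corners in $u$. One is forced to work in a negative-order space, and the argument closes only because in dimension one a Dirac mass already lies in some $H^{-\theta}$ with $\theta<1$, leaving just enough room for the regularization exponent $(1+\theta)/2$ to stay below $1$; everything else is routine analytic-semigroup bookkeeping. A possible alternative is to first establish the pointwise bound $\min_\xi v_\xi\le u^\e_t\le\max_\xi v_\xi$ (legitimate since $u_0$ satisfies it), using the positivity of the Dirichlet heat kernel together with the restoring sign of $c_\xi(v_\xi-u)$; this makes $G_{r^\e_s}(u^\e_s)$ bounded in $H^{-\theta}$ uniformly, after which the mild formulation yields the bound directly, with no Gronwall needed --- but justifying a comparison principle with such singular sources is itself delicate.
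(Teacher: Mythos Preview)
Your argument is correct. The mild formulation does glue across the jump times as you describe (this is a routine induction using the semigroup property and the continuity of $u^\e$ in $H$), the linear-growth bound $\|G_r(u)\|_{H^{-\theta}}\le C_1+C_2\|u\|_H$ follows exactly as you say from $H^\theta\hookrightarrow\mathcal{C}(I)$ for $\theta>1/2$, the smoothing estimate with exponent $(1+\theta)/2<1$ is the standard analytic-semigroup bound (indeed available in \cite{Henry}), and the weakly singular Gronwall lemma closes the argument with a constant depending only on $\|u_0\|_H$, $T$, $\theta$ and the model data --- none of which involve $\e$.

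As for comparison: the present paper does not give a proof but defers entirely to \cite{GT}. The standing hypothesis $\min_\xi v_\xi\le u_0\le\max_\xi v_\xi$ on the initial datum, which plays no role in your argument, is a strong hint that the route taken in \cite{GT} is the comparison/maximum-principle alternative you sketch at the end: one propagates the pointwise bound $\min_\xi v_\xi\le u^\e_t\le\max_\xi v_\xi$ (the sign structure $c_\xi(v_\xi-u)$ is exactly what is needed for this), which makes $G_{r^\e_s}(u^\e_s)$ bounded in $H^{-\theta}$ outright, and then the mild formula gives the $H$-bound directly. Your approach trades that PDE-flavored step for a purely functional-analytic one; it is slightly more robust (it does not need the restoring sign of the reaction term, only its linear growth) at the cost of invoking the singular Gronwall lemma. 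Either route is perfectly adequate here.
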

\noindent The second result states that the averaged model is well posed and is still a PDMP.
\begin{proposition}[\cite{GT}]\label{APDMP}
For any $T>0$ there exists a probability space satisfying the usual conditions such that equation (\ref{Au_moy}) defines a PDMP $(u_t,\bar{r}_t)_{t\in[0,T]}$ in infinite dimension in the sense of \cite{BR}. Moreover, there is a constant $C$ such that
\begin{equation*}
\sup_{t\in[0,T]}\|u_t\|_H\leq C
\end{equation*}
and $u\in\mathcal{C}([0,T],H)$ almost-surely.
\end{proposition}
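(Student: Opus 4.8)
The plan is to observe that the averaged equation (\ref{Au_moy}) is a hybrid reaction-diffusion system of exactly the same structure as the model of Section \ref{Mod}, with the finite state space $E$ replaced by $\{1,\dots,l\}$, the rate functions $\alpha_{\xi,\zeta}$ replaced by the averaged rates $\bar\alpha_{jk}$, and the reaction term $G_r$ replaced by $F_{\bar r}$; it then suffices to check that $(\bar\alpha_{jk})_{j,k}$ and $(F_{\bar r})_{\bar r\in\bar\RR}$ inherit the structural assumptions imposed on $(\alpha_{\xi,\zeta})$ and $G_r$ in Section \ref{Mod}, and to invoke the infinite-dimensional PDMP construction of \cite{BR} as in Proposition \ref{prop_A1}. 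For the jump mechanism, each $\bar\alpha_{jk}(y)=\sum_{\zeta\in E_j}\sum_{\xi\in E_k}\alpha_{\zeta,\xi}(y)\mu_j(y)(\zeta)$ is nonnegative, bounded above by $|E|\,\alpha^+$ (using $\sum_{\zeta\in E_j}\mu_j(y)(\zeta)=1$), and Lipschitz in $y$ (a finite sum of products of the Lipschitz functions $\alpha_{\zeta,\xi}$ and the Lipschitz coordinates of $\mu_j$). Hence the total jump intensity $\lambda(\bar r,u)=\sum_{i\in\ZZ}\sum_{k\neq\bar r(i)}\bar\alpha_{\bar r(i)k}(u(z_i))$ is uniformly bounded, which prevents accumulation of the jump times, so the usual iterative construction (follow the deterministic semiflow, jump at the first point of an inhomogeneous Poisson clock of rate $\lambda(\bar r_t,u_t)$, redraw the aggregated configuration from the kernel built out of the $\bar\alpha_{jk}$, iterate) produces a process well defined on all of $[0,T]$ with only finitely many jumps almost surely.

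For the deterministic part, i.e.\ the PDE (\ref{Au_moy}) with the discrete component frozen at some $\bar r\in\bar\RR$, the key observation is that $u\mapsto F_{\bar r}(u)$ is the composition of the evaluation map $H\to\R^{\ZZ}$, $u\mapsto(u(z_i))_{i\in\ZZ}$, which has operator norm at most $C_P$ by the embedding $H\subset\mathcal{C}(I,\R)$, of the Lipschitz maps $y\mapsto\mu_j(y)$, of the polynomial maps $y\mapsto\sum_{\zeta\in E_j}c_\zeta\mu_j(y)(\zeta)(v_\zeta-y)$, and of the bounded linear maps $\R\to H^*$, $a\mapsto a\,\delta_{z_i}$. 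Thus, just like $G_r$, the map $F_{\bar r}:H\to H^*$ is locally Lipschitz and sends bounded subsets of $H$ into bounded subsets of $H^*$, and the Banach fixed-point argument of \cite{Austin,BR} applied to the mild formulation
\begin{equation*}
u_t=e^{\D t}u_0+\int_0^t e^{\D(t-s)}F_{\bar r}(u_s)\,ds
\end{equation*}
--- which relies on the smoothing of $e^{\D t}$ with an integrable singularity at $t=0$, available here precisely because $F_{\bar r}(u)$ is a finite combination of the distributions $\delta_{z_i}$, exactly as in the original model --- produces a unique local mild solution $\psi_{\bar r}(\cdot,u_0)\in\mathcal{C}([0,\tau],H)$, to be continued globally once an a priori bound is secured.

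That a priori bound is obtained as in \cite{GT} (cf.\ Proposition \ref{Prop_bound}). For each $i$ the quantity $\sum_{\zeta\in E_j}c_\zeta\mu_j(a)(\zeta)(v_\zeta-a)$ is a convex combination of the terms $c_\zeta(v_\zeta-a)$, hence it is $\geq 0$ for $a\leq\min_\xi v_\xi$ and $\leq 0$ for $a\geq\max_\xi v_\xi$, so the comparison argument of \cite{Austin,GT} applies verbatim and shows that $[\min_\xi v_\xi,\max_\xi v_\xi]$ is invariant; since $u_0$ lies in this interval one gets $\sup_I|u_t|\leq\max_\xi|v_\xi|$ for all $t\in[0,T]$. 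Consequently $\sup_{s\leq T}\|F_{\bar r_s}(u_s)\|_{H^*}$ is bounded by a deterministic constant, and feeding this back into the mild formulation together with the smoothing estimate yields $\|u_t\|_H\leq\|u_0\|_H+C\int_0^t(t-s)^{-\gamma}\,ds$ with $\gamma<1$, a bound that is uniform on $[0,T]$ and over all sample paths. The same estimates give $u\in\mathcal{C}([0,T],H)$ on each inter-jump interval, and because a jump of $\bar r$ leaves $u$ unchanged, $u$ is continuous on the whole of $[0,T]$.

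With the flow, the jump rates and the a priori bounds in hand, the strong Markov property and the identification of $(u_t,\bar r_t)_{t\in[0,T]}$ as a PDMP in the sense of \cite{BR} follow exactly as for Proposition \ref{prop_A1}. The main point that requires genuine care --- the only real departure from the setting of that proposition --- is that $F_{\bar r}$ is no longer affine in $u$, owing to the factors $\mu_j(u(z_i))$: one must verify that the local Lipschitz estimate and, above all, the sup-norm a priori bound that globalizes the semiflow still go through, the latter resting on a comparison principle for a reaction-diffusion equation whose source is a signed combination of Dirac masses, which one justifies by a standard mollification argument.
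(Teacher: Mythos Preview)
Your proposal is correct and follows the natural route. Note, however, that the present paper does not actually prove this proposition: it is quoted from \cite{GT}, and the authors explicitly write ``we refer the interested reader to this paper for proofs.'' Your sketch is precisely what one expects the argument in \cite{GT} to be --- verify that the averaged data $(\bar\alpha_{jk},F_{\bar r})$ inherit the structural hypotheses imposed on $(\alpha_{\xi,\zeta},G_r)$, invoke the infinite-dimensional PDMP construction of \cite{BR} as in Proposition~\ref{prop_A1}, and obtain the uniform $H$-bound from the invariance of the interval $[\min_\xi v_\xi,\max_\xi v_\xi]$ under the averaged flow. Your identification of the one genuinely new point --- that $F_{\bar r}$ is nonlinear in $u$ through the factors $\mu_j(u(z_i))$, so the local Lipschitz estimate and the comparison principle with Dirac sources require a word of justification (mollification) --- is well placed.
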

\noindent We can now state the averaging result.
\begin{theorem}[\cite{GT}]\label{APDE}
When $\e$ goes to $0$ the stochastic process $u^\e$ solution of (\ref{gene_eps}) converges in distribution in the space $\mathcal{C}([0,T],H)$ to $u$, solution of (\ref{Au_moy}).
\end{theorem}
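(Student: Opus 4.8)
\medskip
\noindent\textbf{Proof (sketch).}
The plan is the classical two-step averaging scheme: first establish tightness of $(u^\e)_{\e\in(0,1]}$ in $\mathcal{C}([0,T],H)$, then identify every subsequential weak limit as a solution of the averaged equation (\ref{Au_moy}), and finally conclude, by the well-posedness of (\ref{Au_moy}) (Proposition \ref{APDMP}), that the whole family converges. All estimates are carried out on the mild formulation
\begin{equation*}
u^\e_t=e^{\D t}u_0+\int_0^t e^{\D(t-s)}G_{r^\e_s}(u^\e_s)\,ds ,
\end{equation*}
coupled with the jump component $r^\e$ and its aggregation $\bar r^\e$.

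For tightness, the starting point is the uniform bound of Proposition \ref{Prop_bound}, which confines $u^\e_t$ to a fixed ball of $H$ almost surely; since moreover the transmembrane potential stays between $\min_\xi v_\xi$ and $\max_\xi v_\xi$ and the conductances are bounded, $G_{r^\e_s}(u^\e_s)$ is, uniformly in $\e$ and $s$, a bounded finite combination of the Diracs $\delta_{z_i}$. Using the smoothing of the analytic semigroup $\{e^{\D\tau}\}$ applied to this only mildly singular forcing (in one space dimension the $\delta_{z_i}$ are better than generic elements of $H^*$, so $\tau\mapsto\|e^{\D\tau}\delta_{z_i}\|_H$ is integrable and still gains some fractional regularity), the mild formula upgrades the $H$-bound to a uniform bound in a space $\mathcal D((-\D)^{\beta})$ with $\beta>0$ small, which embeds compactly in $H$; this gives tightness of the one-dimensional marginals in $H$. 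Equicontinuity in time follows from the same representation by splitting $u^\e_t-u^\e_s=(e^{\D(t-s)}-\mathrm{Id})u^\e_s+\int_s^t e^{\D(t-r)}G_{r^\e_r}(u^\e_r)\,dr$ and estimating each term with the regularity just gained, yielding a H\"older-type modulus of continuity uniform in $\e$. Since the sample paths are continuous (between the jumps of $r^\e$ the field $u^\e$ solves (\ref{dyn:fix})), an Arzel\`a--Ascoli / Kolmogorov tightness criterion closes this step.

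The identification of the limit is the core of the argument and is where the ergodicity of the fast chain enters; I would use the perturbed test function (equivalently, occupation-measure) method. For a bounded cylindrical smooth test function $f(u,\bar r)$ of the slow--aggregate pair, lifted to $\tilde f(u,r):=f(u,\bar r)$ (so that the fast generator annihilates it, $\Ba(u)\tilde f=0$, since transitions inside a class $E_j$ do not change $\bar r$), one seeks a corrector $f_1(u,r)$, bounded together with its relevant derivatives, solving componentwise a Poisson equation $\Ba(u)f_1(u,\cdot)(r)=\Phi(u,r)$ whose right-hand side $\Phi$ is the $\mu_j(u(z_i))$-centred part of the drift-and-jump fluctuation $\tfrac{d}{dt}\tilde f+\hat\Ba(u)\tilde f-\bar{\mathcal A}f$, where $\bar{\mathcal A}$ is the generator of the limit PDMP $(u,\bar r)$ (drift $\D u+F_{\bar r}(u)$, averaged jump rates $\bar\alpha_{jk}$ of Proposition \ref{YZ}). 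This equation is solvable precisely because each $\Ba_j(y)$ is weakly irreducible with invariant measure $\mu_j(y)$ (Fredholm alternative on the finite set $E_j$), and its solution inherits the Lipschitz dependence on $y$ from $\mu_j$; indeed $\Phi$ is centred exactly by the definitions (\ref{F}) of $F_{\bar r}$ and of $\bar\alpha_{jk}$. Then $f^\e:=\tilde f+\e f_1$ satisfies $\mathcal A^\e f^\e=\bar{\mathcal A}f+O(\e)$ in the uniform norm. Writing the Dynkin martingale associated with $f^\e$ and $\mathcal A^\e$, passing along a weakly convergent subsequence (Skorokhod representation plus the tightness above), and using the continuity of the rates $\alpha_{\xi,\zeta}$, of the distributions $\mu_j$, of the evaluation $u\mapsto u(z_i)$ on $H$ (via $H\hookrightarrow\mathcal C(I)$) and of $F_{\bar r}$, one obtains that the limit solves the martingale problem for $\bar{\mathcal A}$; uniqueness for that problem (Propositions \ref{YZ} and \ref{APDMP}) identifies $u$ as the solution of (\ref{Au_moy}), so the full family converges in $\mathcal C([0,T],H)$.

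The main obstacle is the interplay between the singular, pointwise forcing and the averaging. On the one hand $\delta_{z_i}\notin H$, so one cannot manipulate the generator and the corrector directly in $H$: the estimates proving that the corrector term $\e f_1$ contributes nothing in the limit and that the remainder is genuinely $O(\e)$ uniformly have to be run on the mild formulation and exploit the precise integrable smoothing of $e^{\D\tau}$ against $\delta_{z_i}$. On the other hand the fast chain at each site $z_i$ has rates depending on the moving slow field through $u^\e(z_i)$, an evaluation that is only continuous for the $H$-topology; this is exactly why tightness must be obtained in $\mathcal C([0,T],H)$ rather than in a weaker space, and why the Lipschitz regularity in $y$ of the quasi-stationary distributions is assumed. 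Verifying, in the limit, that the time-occupation of the fast states inside each class $E_j$ converges to $\mu_j(u(z_i))$, hence that the effective forcing is exactly $F_{\bar r}(u)$ as in (\ref{F}), uniformly along the (spatially discrete but time-continuous) dynamics, is the delicate point of the proof.
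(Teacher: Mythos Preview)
The present paper does not actually prove Theorem~\ref{APDE}: it is recalled from \cite{GT}, and the text explicitly says ``We recall now the most important results of \cite{GT} and we refer the interested reader to this paper for proofs.'' So there is no proof in this paper to compare your sketch against directly. That said, the paper does briefly review the method of \cite{GT} inside the proof of Proposition~\ref{bound:z} (the Poisson-equation / perturbed-test-function step, see equation~(\ref{Poisson})), and your identification argument is exactly that approach: solve $\Ba(u)f_1=\Phi$ on each fast class by the Fredholm alternative, use the corrector $\tilde f+\e f_1$ so that $\mathcal A^\e f^\e=\bar{\mathcal A}f+O(\e)$, and pass to the limit in the Dynkin martingale. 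On this core step your proposal matches \cite{GT} essentially verbatim.

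Where you take a somewhat different route is tightness. You argue via the mild formulation and analytic-semigroup smoothing of $e^{\D\tau}\delta_{z_i}$ to land in a fractional domain $\mathcal D((-\D)^\beta)$ compactly embedded in $H$, then use Arzel\`a--Ascoli. The paper, by contrast (both in \cite{GT} and, for the CLT process $z^\e$, in Section~\ref{Tight} here), works with M\'etivier's criterion: direct spectral estimates on Fourier coefficients $(u^\e_t,e_k)^2$ to show the tail $\sum_{k>p}(u^\e_t,e_k)^2$ vanishes uniformly in $\e$, plus Aldous's condition for temporal regularity. Both approaches are valid and rely on the same underlying fact (the integrable singularity of $\|e^{\D\tau}\delta_{z_i}\|_H$ at $\tau=0$, equivalently the summability of $(1+(k\pi)^2)e^{-(k\pi)^2 t}$); yours is perhaps more conceptual via compact embeddings, theirs is more hands-on and avoids invoking fractional domains. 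Your sketch is correct and complete at the level of a plan; the ``main obstacle'' you flag (coupling through the pointwise evaluation $u\mapsto u(z_i)$, requiring tightness genuinely in $H$ and Lipschitz dependence of $\mu_j$) is indeed the delicate point identified in \cite{GT}.
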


\section{Main results}\label{Main}

\subsection{Fluctuations and Langevin Approximation}

We present in this section the main results of the present paper. The averaging result of Theorem \ref{APDE} above can be seen as a law of large numbers. The natural next step is then to study the fluctuations of the slow-fast system around its averaged limit, in other words to look for a central limit theorem. For this purpose, we introduce the process
\begin{equation}
z^\e_t=\frac{u^\e_t-u_t}{\sqrt\e}
\end{equation}
for $t\in[0,T]$ and $\e\in]0,1]$.
\begin{theorem}\label{DEDP}
When $\e$ goes to $0$, the process $z^\e$ converges in distribution in $\mathcal{C}([0,T],H)$ towards a process $z$ uniquely defined as the solution of the following martingale problem: for any measurable, bounded and twice Fr\'echet differentiable function $\psi:H\to\R$, the process 
\begin{equation}\label{m:prob}
\bar N_{\psi}(t):=\psi(z_t)-\int_0^t\bar{\mathcal{G}}^1(s)\psi(z_s)ds
\end{equation}
with
\begin{equation}\label{gen1:moy}
\bar{\mathcal{G}}^1(t)\psi(z)
=\frac{d\psi\mathbf{1}}{dz}(z)[\D z+\frac{dF_{\bar r}}{du}(u_t)[z]]+{\rm Tr }\frac{d^2\psi\mathbf{1}}{dz^2}(z)C_{\bar r}(u_t)
\end{equation}
for $t\in[0,T]$, is a martingale. We denote by $\mathbf{1}$ the vector $(1,\cdots,1)^t\in\R^N$. The diffusion operator $C_{\bar r}(u)=B_{\bar r}(u)B_{\bar r}(u)^*:H^*\to H^*$ is characterized by
\begin{eqnarray*}
&~&<C_{\bar r}(u)e^*_j,e_i>\\
&=&\int_{E_{\bar r(1)}\times\cdots E_{\bar r(N)}}<G_r(u)-F_{\bar r}(u),e_i><\Phi(r,u),e_j>\mu_{\bar r(1)}(u)(dr(1))\cdots\mu_{\bar r(N)}(u)(dr(N))
\end{eqnarray*}
where $e^*_j(e_i)=\delta_{ij}$ (Kronecker symbol) for $i,j\geq1$. Moreover $\Phi: H\times \RR\to H^*$ is the unique solution of
\begin{equation}\label{def:Phi:ms}
\left\{\begin{array}{ccc}
\Ba(u)\Phi(r,u)&=&-(G_r(u)-F_{\bar r}(u)),\quad \forall (u,r)\in H\times\RR\\
\int_{E_{j_1}\times\cdots E_{j_N}}\Phi(u,r)\mu_{j_1}(u)(dr)\times\cdots\times\mu_{j_{N}}(u)(dr)&=&0,\quad \forall (j_1,\cdots,j_N)\in \{1,\cdots,l\}^N
\end{array}\right.
\end{equation}
where $\Ba$ is the "fast" generator.
\end{theorem}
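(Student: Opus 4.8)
The plan is to establish $z^\e\Rightarrow z$ in $\mathcal{C}([0,T],H)$ in three stages --- tightness of $\{z^\e\}_{\e\in(0,1]}$, identification of every limit point as a solution of the martingale problem (\ref{m:prob})--(\ref{def:Phi:ms}), and uniqueness of that solution --- and, following the roadmap of the excerpt, to carry this out first in the \emph{all fast} case ($l=1$, so $\bar r$ and $u$ are deterministic and $z$ is Gaussian) and then to import the argument into the multiscale case by adjoining the coarse-grained component. Two structural facts should be proved beforehand, in the section on the diffusion operator. First, for each fixed $u\in H$ the Poisson equation $\Ba(u)\Phi(r,u)=-(G_r(u)-F_{\bar r}(u))$ is solvable in $H^\ast$ with the stated normalisation: this is the Fredholm alternative for the fast generator $\Ba_j(u)$ on $E_j$, whose solvability condition is precisely that $G_r(u)-F_{\bar r}(u)$ integrate to zero against $\mu_{\bar r(1)}(u)\otimes\cdots\otimes\mu_{\bar r(N)}(u)$, which holds by the very definition (\ref{F}) of $F_{\bar r}$. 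Second, $C_{\bar r}(u)=B_{\bar r}(u)B_{\bar r}(u)^\ast$ is a well-defined nonnegative operator of the required class, with $u\mapsto\Phi(\cdot,u)$ and $u\mapsto C_{\bar r}(u)$ Lipschitz, inherited from the Lipschitz regularity assumed on $\alpha_{\xi,\zeta}$ and on the quasi-stationary measures $\mu_j$.

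For tightness, I would start from the mild equation $z^\e_t=e^{\D t}z^\e_0+\e^{-1/2}\int_0^t e^{\D(t-s)}\big(G_{r^\e_s}(u^\e_s)-F_{\bar r_s}(u_s)\big)\,ds$ and split the integrand as $\big(G_{r^\e_s}(u^\e_s)-F_{\bar r^\e_s}(u^\e_s)\big)+\big(F_{\bar r^\e_s}(u^\e_s)-F_{\bar r^\e_s}(u_s)\big)+\big(F_{\bar r^\e_s}(u_s)-F_{\bar r_s}(u_s)\big)$. Dividing the middle term by $\sqrt\e$ produces $\frac{dF_{\bar r^\e_s}}{du}(u_s)[z^\e_s]$ plus a remainder of order $\sqrt\e\|z^\e_s\|_H^2$; together with the dissipative sign of $\langle G_r(u^\e)-G_r(u),z^\e\rangle\le 0$ (the $c_\zeta$ are nonnegative) and the integrability $\int_0^T\|e^{\D t}\delta_{z_i}\|_H\,dt<\infty$, this feeds a Gronwall estimate, after a localisation argument handling the quadratic remainder. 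The first term is centred against $\bigotimes_i\mu_{\bar r^\e_s(i)}(u^\e_s(z_i))$, so writing it as $-\Ba(u^\e_s)\Phi(r^\e_s,u^\e_s)$ and applying the Dynkin formula for the PDMP $(u^\e,r^\e)$ to $s\mapsto\Phi(r^\e_s,u^\e_s)$ shows, after integrating by parts against the semigroup, that its contribution to $z^\e_t$ is $O(\sqrt\e)$ (a boundary term, a bounded-variation term involving $\hat\Ba\Phi$ and $\frac{d\Phi}{du}$, and a jump martingale, each carrying a $\sqrt\e$ prefactor). The last term is controlled by a coupling estimate in the spirit of Proposition \ref{YZ} and the averaging of \cite{GT}: $\bar r^\e$ and $\bar r$ can be realised so that $\PP(\bar r^\e_s\neq\bar r_s\text{ for some }s\le T)=O(\e)$, so this term is $O(\sqrt\e)$ in $L^1$. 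These bounds yield $\EE\sup_{t\le T}\|z^\e_t\|_H^2\le C$; combined with the Aldous--Kurtz criterion applied to the scalar semimartingales $\langle z^\e_\cdot,e_i\rangle$ and with the compactness of bounded subsets of $\mathcal{D}(\D)$ in $H$, this gives tightness in $\mathcal{C}([0,T],H)$.

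For the identification I would use the perturbed test function (corrector) method. For $\psi:H\to\R$ bounded, measurable and twice Fréchet differentiable, set $\psi^\e=\psi(z^\e)+\sqrt\e\,\psi_1(u^\e,r^\e,z^\e)+\e\,\psi_2(u^\e,r^\e,z^\e)$, with $\psi_1,\psi_2$ built from $\Phi$ so that the $O(\e^{-1/2})$ and the fast-oscillating $O(1)$ parts of $\mathcal{A}^\e\psi^\e$ cancel by the Poisson equation; what survives is $\mathcal{A}^\e\psi^\e(u^\e_s,r^\e_s,z^\e_s)=\bar{\mathcal{G}}^1(s)\psi(z^\e_s)+o(1)$, the linear corrector contributing the drift $\frac{d\psi\mathbf{1}}{dz}(z)[\D z+\frac{dF_{\bar r}}{du}(u_s)[z]]$ and the quadratic corrector $\psi_2$ contributing ${\rm Tr}\,\frac{d^2\psi\mathbf{1}}{dz^2}(z)C_{\bar r}(u_s)$ through exactly the bilinear form $\langle G_r(u)-F_{\bar r}(u),\cdot\rangle\langle\Phi(r,u),\cdot\rangle$ that defines $C_{\bar r}(u)$. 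Passing to the limit along a subsequence provided by tightness and Skorokhod's representation --- using $u^\e\Rightarrow u$ (Theorem \ref{APDE}) and $\bar r^\e\Rightarrow\bar r$ (Proposition \ref{YZ}), which is what makes $\bar{\mathcal{G}}^1(s)$, depending on $(\bar r_s,u_s)$, meaningful in the limit --- yields that $\bar N_\psi(t)=\psi(z_t)-\int_0^t\bar{\mathcal{G}}^1(s)\psi(z_s)\,ds$ is a martingale for the limit process. Finally, for uniqueness: conditionally on the path of $(\bar r_\cdot,u_\cdot)$, whose law is entirely fixed by Proposition \ref{APDMP}, (\ref{m:prob}) is the martingale problem of a time-inhomogeneous infinite-dimensional Ornstein--Uhlenbeck process --- affine drift $\D z+\frac{dF_{\bar r_s}}{du}(u_s)[z]$ and additive Gaussian noise with covariance $C_{\bar r_s}(u_s)$ --- which is well posed with a unique law by the standard theory of linear SPDEs; integrating over the law of $(\bar r,u)$ gives uniqueness of the law of $z$, and together with tightness this upgrades subsequential convergence to full convergence.

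The main obstacle I expect is the identification step in infinite dimension: making the corrector expansion rigorous when the reaction term is the $H^\ast$-valued, $\delta$-singular $G_r$ and $\D$ is unbounded, so that $\mathcal{A}^\e\psi_1$, $\mathcal{A}^\e\psi_2$ and all remainder terms must be estimated in the correct (dual) norms with the right integrable singularities of $e^{\D t}$. This is also precisely what forces the separate technical work mentioned above --- the regularity theory for $\Phi$ and for $C_{\bar r}$, and the $O(\e)$ coupling bound for the coarse-grained chain --- and, in the multiscale case, the verification that the slow-jump fluctuation $\e^{-1/2}\int_0^t e^{\D(t-s)}(F_{\bar r^\e_s}(u_s)-F_{\bar r_s}(u_s))\,ds$ contributes nothing in the limit, so that no additional noise term beyond $C_{\bar r}(u_t)$ appears.
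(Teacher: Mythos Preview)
Your overall architecture --- tightness, corrector expansion, martingale identification, uniqueness via the linear SPDE theory --- matches the paper's, and your instinct to treat the all-fast case first is exactly what the paper does. Two implementation differences are worth noting. First, the paper uses only a \emph{single} corrector: with $\phi=\psi(z)+\sqrt\e\,\gamma(z,r,t)$ and $\gamma=\frac{d\psi\mathbf{1}}{dz}(z)[\Phi(r,u_t)]$, the trace term ${\rm Tr}\,\frac{d^2\psi\mathbf{1}}{dz^2}(z)C(u_t)$ does \emph{not} come from a second corrector $\psi_2$ but from the order-one contribution $\frac{d\gamma}{dz}(z,r,t)[G_r(u_t)-F(u_t)]=\frac{d^2\psi\mathbf{1}}{dz^2}(z)[\Phi(r,u_t),G_r(u_t)-F(u_t)]$; the remaining $r$-dependence at order one is then averaged out not by a further Poisson equation but by a separate argument in the martingale verification (inserting an auxiliary scale $\e^1$ and using the averaging result once more). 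Your two-corrector scheme is a legitimate alternative, but be aware that you are not reproducing the paper's computation. Second, for tightness the paper does not use the mild formulation: it differentiates $\|u^\e_t-u_t\|^2_H$ directly, uses the spectral gap of $\D$ and the Lipschitz property of $F$, and controls the cross term $\EE\int_0^t\langle G_{r^\e_s}(u^\e_s)-F(u^\e_s),u^\e_s-u_s\rangle\,ds=O(\e)$ by yet another Poisson-equation trick; tightness in $H$ (not just $L^2(I)$) then comes from a uniform tail estimate on the Fourier coefficients $(z^\e_t,e_k)$, not from a bound in $\mathcal{D}(\D)$.

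One point in your outline is a genuine gap rather than a stylistic difference: the coupling claim $\PP(\bar r^\e_s\neq\bar r_s\text{ for some }s\le T)=O(\e)$. The paper neither states nor uses such a strong pathwise coupling --- Proposition~\ref{YZ} gives only weak convergence of $\bar r^\e$ to $\bar r$ --- and in the multiscale proof the term $F_{\bar r^\e_s}(u_s)-F_{\bar r_s}(u_s)$ is not isolated and bounded this way. If you rely on that coupling, you will need an independent argument (and $O(\e)$ in probability is stronger than what singular-perturbation results for slow--fast Markov chains typically deliver). The paper instead treats the multiscale case as a notational extension of the all-fast argument, with the slow component $\bar r^\e$ carried along and its convergence handled through the averaging machinery of \cite{GT,YZ}, not through an explicit coupling bound.
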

\noindent The evolution equation associated to the martingale problem (\ref{m:prob}) is the following hybrid SPDE (see \cite{DPZ})
\begin{equation}\label{eq:dev}
dz_t=\left(\D z_t +\frac{dF_{\bar r_t}}{du}(u_t)[z_t]\right)dt+B_{\bar r_t}(u_t)dW_t
\end{equation}
with initial condition $0$ and zero Dirichlet boundary conditions. $W$ denotes the standard cylindrical Wiener process on the Hilbert space $H$. Formally, the cylindrical Wiener process $W$ is defined as follows: let $((\beta_k(t))_{t\geq0},k\geq1)$ be a family of independent brownian motions, then
\[
W_t=\sum_{k\geq1}\beta_k(t)e_k
\]
See \cite{DPZ} for more information about the construction of $W$. A complete description of the diffusion operator $C$ is provided is Section \ref{Diff_op}. The well definitness of equation (\ref{eq:dev}) may be an issue since the operator $C_j(u)$ is not of trace class in $H$ for $(j,u)\in \{1,\cdots,l\}\times H$. However, for any $(j,u)\in \{1,\cdots,l\}\times \mathcal{C}([0,T],H)$ and $t>0$, the operator
\[
Q^j_t: \psi\mapsto\int_0^te^{\D (t-s)}C_j(u_s)e^{\D (t-s)}\psi ds
\]
is of trace class in $L^2(I)$. Thus we can apply classical results from the theory of SPDE in Hilbert spaces to deduce the existence and uniqueness of a mild solution to equation (\ref{eq:dev}), see the classical reference  \cite{DPZ} on this topic. See also \cite{Yin:Zhu} for an introduction to switching diffusions.\\
A natural step after having obtained a central limit theorem corresponding to an averaged model is to look at the associated Langevin approximation. Formally, the Langevin approximation corresponds to the averaged model \emph{plus} fluctuations. In our case this results in the study of the process $\tilde u^\e$ defined in the all-fast case for $\e>0$ by
\begin{equation}\label{Lang_ini}
d\tilde u^\e=[\Delta \tilde u^\e + F_{\bar r}(\tilde u^\e)]dt+\sqrt{\e}B_{\bar r}(\tilde u^\e)dW_t
\end{equation}
with initial condition $u_0$ and zero Dirichlet boundary conditions. However, it appears that the operator
\[
Q^j_t: \psi\in\mapsto \int_0^t e^{\D (t-s)}C_j(u_s)e^{\D (t-s)}\psi ds
\]
with $(j,u)\in \{1,\cdots,l\}\times \mathcal{C}([0,T],H)$ is not of trace class in $H$ (see the example in Section \ref{Exemple}) while by definition, the reaction term $F$ is well defined from $H$ to $H^*$. This implies technical issues in trying to define properly a solution to equation (\ref{Lang_ini}). In order to circumvent this difficulty, we consider the reaction term $F_{\bar r,\phi}$ instead of $F_{\bar r}$ as follows
\[
F_{\bar r,\phi}(u)=\frac{1}{N}\sum_{i\in \ZZ}\sum_{j=1}^l 1_{j}(\bar r(i))\sum_{\zeta\in E_j}c_\zeta \mu_j((u,\phi_{z_i})_{L^2(I)})(\zeta)(v_\zeta-(u,\phi_{z_i})_{L^2(I)})\phi_{z_i}
\]
where for $i\in\ZZ$ the functions $\phi_{z_i}$ are defined on $I$ by
\[
\phi_{z_i}(x)=\frac1\kappa M\left(\frac{x-z_i}{\kappa}\right)
\]
with $\kappa$ small enough such that $\phi_{z_i}$ is compactly supported in $I$. The mollifier $M$ is defined on $\R$ by
\[
M(x)=e^{-\frac{1}{1-x^2}}1_{[-1,1]}(x)
\]
The corresponding non-averaged reaction term is given by
\[
G_{r,\phi}(u)=\frac{1}{N}\sum_{i\in \ZZ}c_{r(i)}(v_{r(i)}-(u,\phi_{z_i})_{L^2(I)})\phi_{z_i}
\]
for $(r,u)\in\RR\times L^2(I)$. For $i\in\ZZ$, the function $\phi_{z_i}$ is therefore an approximation in the sense of distributions of the Dirac distribution $\delta_{z_i}$. Notice that each $\phi_{z_i}$ is in $L^2(I)$. The biological meaning of replacing $\delta_{z_i}$ by $\phi_{z_i}$ is that when the channel located at $z_i$ is open and allows a current to pass, it is not just the voltage at $z_i$ that is affected, but also the voltage on a small area around $z_i$ (see \cite{BR}). The smaller $\kappa$, the smaller this the area. When $u$ is held fixed, the dynamic of the ion channel at location $z_i$ is evolves according to
\begin{equation}\label{saut:phi}
\PP(r_{t+h}(i)=\zeta|r_t(i)=\xi)=\alpha_{\xi,\zeta}\left((u,\phi_{z_i})_{L^2(I)}\right)h+o(h)
\end{equation}
for $\xi,\zeta\in E$ and $t,h\geq0$. It is easy to see that all the results stated in the present paper for the the model described in Section 1 are still valid for this class of models. In this case the averaged model is given by
\begin{equation}\label{Av:phi}
\partial_tu=[\Delta u + F_{\bar r,\phi}(u)]dt
\end{equation}
with initial condition $u_0$ and zero Dirichlet boundary conditions. The coupled averaged dynamic of the ion channels obeys
\[
\PP(\bar r_{t+h}(i)=k|\bar r_t(i)=j)=\bar\alpha_{jk}((u_t,\phi_{z_i}))_{L^2(I)})h+o(h)
\]
for $i\in\ZZ$, $j,k\in\{1,\cdots,l\}$ and $t,h\geq0$.\\
We would like to compare the above averaged equation with the following Langevin approximation
\begin{equation}\label{Lang:phi}
d\tilde u^\e=[\Delta \tilde u^\e + F_{\bar r,\phi}(\tilde u^\e)]dt+\sqrt{\e}B_{\bar r,\phi}(\tilde u^\e)dW_t
\end{equation}
where $B_{\bar r,\phi}$ is defined as the squared root of $C_{\bar r,\phi}=B_{\bar r,\phi} B^*_{\bar r,\phi}$ and $C_{\bar r,\phi}$ is as in Theorem (\ref{DEDP}), taking into account the changes due to the new form of the model.
\begin{proposition}\label{A1}
The following estimate holds, where the trace is taken in the $L^2(I)$-sense
\[
\text{Tr }\int_0^t e^{\D (t-s)}C_{\bar r,\phi}(u_s)e^{\D (t-s)}ds\leq \sum_{k\geq1}\int_0^t(\alpha\|u_s\|^2_{L^2(I)}+\beta\|u_s\|_{L^2(I)}+\gamma )e^{-2(k\pi)^2(t-s)}ds 
\]
for any $t\in[0,T]$, and any functions $u\in\mathcal{C}([0,T],L^2(I))$ and averaged state $\bar r\in\{1,\cdots,l\}$ with $\alpha,\beta,\gamma$ three constants.
\end{proposition}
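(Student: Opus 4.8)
\emph{Proof strategy.} The computation rests on diagonalising the heat semigroup in $L^2(I)$. The functions $f_k=\sqrt2\sin(k\pi\cdot)$, $k\geq1$, form a Hilbert basis of $L^2(I)$ consisting of eigenvectors of $\D$, with $\D f_k=-(k\pi)^2f_k$ and $e^{\D\tau}f_k=e^{-(k\pi)^2\tau}f_k$. Since $e^{\D(t-s)}$ is self-adjoint on $L^2(I)$ and $C_{\bar r,\phi}(u_s)$ is a nonnegative operator on $L^2(I)$ (indeed of finite rank $\leq N$, its range lying in the span of the $\phi_{z_i}$), all terms below are nonnegative and
\[
\text{Tr}\int_0^t e^{\D(t-s)}C_{\bar r,\phi}(u_s)e^{\D(t-s)}\,ds=\sum_{k\geq1}\int_0^t e^{-2(k\pi)^2(t-s)}\bigl(C_{\bar r,\phi}(u_s)f_k,f_k\bigr)_{L^2(I)}\,ds .
\]
Hence it suffices to establish the pointwise bound $\bigl(C_{\bar r,\phi}(u)f_k,f_k\bigr)_{L^2(I)}\leq\alpha\|u\|^2_{L^2(I)}+\beta\|u\|_{L^2(I)}+\gamma$ with constants $\alpha,\beta,\gamma$ independent of $k\geq1$, of the averaged configuration $\bar r$ and of $u\in L^2(I)$; substituting it into the display yields the claim.

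For the pointwise bound I would use the analogue for the mollified model of the integral representation of $C$ from Theorem~\ref{DEDP}, namely
\[
\bigl(C_{\bar r,\phi}(u)f_k,f_k\bigr)_{L^2(I)}=\int\bigl(G_{r,\phi}(u)-F_{\bar r,\phi}(u),f_k\bigr)_{L^2(I)}\bigl(\Phi_\phi(r,u),f_k\bigr)_{L^2(I)}\prod_{i\in\ZZ}\mu_{\bar r(i)}(u)(dr(i)),
\]
where $\Phi_\phi$ solves the Poisson equation~(\ref{def:Phi:ms}) written with $G_{r,\phi}$, $F_{\bar r,\phi}$ and the fast generator $\Ba$. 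Applying the Cauchy--Schwarz inequality to the probability measure $\prod_{i}\mu_{\bar r(i)}(u)(dr(i))$, it is enough to bound, uniformly in $k$ and $\bar r$, the quantities $\int\bigl(G_{r,\phi}(u)-F_{\bar r,\phi}(u),f_k\bigr)^2_{L^2(I)}\prod_i\mu$ and $\int\bigl(\Phi_\phi(r,u),f_k\bigr)^2_{L^2(I)}\prod_i\mu$ by $C(1+\|u\|_{L^2(I)})^2$.

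The elementary point that makes the estimate uniform in $k$ is that the mollifiers have $k$-independent $L^1$-mass: since $M\geq0$, $\|\phi_{z_i}\|_{L^1(I)}=\|M\|_{L^1(\R)}$ for every $i$, hence $|(\phi_{z_i},f_k)_{L^2(I)}|\leq\|f_k\|_\infty\|\phi_{z_i}\|_{L^1(I)}=\sqrt2\,\|M\|_{L^1(\R)}$ for all $k,i$. Now $G_{r,\phi}(u)-F_{\bar r,\phi}(u)=\tfrac1N\sum_{i\in\ZZ}h_i(r(i))\phi_{z_i}$ with coefficients bounded, using $0\leq\mu_j\leq1$, $0\leq c_\xi\leq\max_\xi c_\xi$, $|v_\xi|\leq\max_\xi|v_\xi|$ and $\|\phi_{z_i}\|^2_{L^2(I)}=\kappa^{-1}\|M\|^2_{L^2(\R)}$, by $|h_i(\cdot)|\leq C(1+|(u,\phi_{z_i})_{L^2(I)}|)\leq C(1+\kappa^{-1/2}\|M\|_{L^2(\R)}\|u\|_{L^2(I)})$; pairing against $f_k$ and invoking the $L^1$-bound gives $|(G_{r,\phi}(u)-F_{\bar r,\phi}(u),f_k)_{L^2(I)}|\leq C(1+\|u\|_{L^2(I)})$ uniformly in $k,r$, which controls the first quantity. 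For the second I would solve the Poisson equation channel by channel: positing $\Phi_\phi(r,u)=\tfrac1N\sum_{i\in\ZZ}g_i(r(i))\phi_{z_i}$, independence of the channels reduces $\Ba(u)\Phi_\phi(r,u)=-(G_{r,\phi}(u)-F_{\bar r,\phi}(u))$ to the finite linear systems $\Ba_{\bar r(i)}\bigl((u,\phi_{z_i})_{L^2(I)}\bigr)g_i=-h_i$ on each class $E_{\bar r(i)}$, where $h_i$ is $\mu_{\bar r(i)}\bigl((u,\phi_{z_i})_{L^2(I)}\bigr)$-centred, so a solution exists, and the normalisation in~(\ref{def:Phi:ms}) picks the $\mu$-centred one. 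Since the jump graph inside each $E_j$ is fixed and connected and the non-zero rates lie in $[\alpha_-,\alpha^+]$, the spectral gap of $\Ba_j(y)$ on the mean-zero subspace is bounded below uniformly in $y$, so $\|g_i\|_\infty\leq C\|h_i\|_\infty\leq C(1+\|u\|_{L^2(I)})$, and pairing against $f_k$ with the uniform $L^1$-bound gives $|(\Phi_\phi(r,u),f_k)_{L^2(I)}|\leq C(1+\|u\|_{L^2(I)})$ uniformly in $k$. Combining the two bounds yields $(C_{\bar r,\phi}(u)f_k,f_k)_{L^2(I)}\leq C^2(1+\|u\|_{L^2(I)})^2$, which is of the asserted form.

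\emph{Main obstacle.} The delicate step is the resolution of the Poisson equation: one must check that it can genuinely be solved within the span of the $\phi_{z_i}$ channelwise and, above all, that the pseudo-inverse of the fast generator on the mean-zero subspace is bounded \emph{uniformly} in the voltage argument $y=(u,\phi_{z_i})_{L^2(I)}$, which is a priori unbounded. This uniformity is exactly where the structural assumptions on the rates enter (each $\alpha_{\xi,\zeta}$ is either identically $0$ or bounded between $\alpha_-$ and $\alpha^+$): the connectivity graph is then independent of $y$, and a standard finite-dimensional perturbation/compactness argument yields a uniform-in-$y$ lower bound on the spectral gap. Everything else is a routine estimate.
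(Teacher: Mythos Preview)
Your argument is correct and follows the same overall route the paper has in mind: diagonalise the heat semigroup in the $L^2(I)$-basis $f_k$, reduce to a $k$-uniform bound on $(C_{\bar r,\phi}(u)f_k,f_k)_{L^2(I)}$, and control the latter through the integral formula for $C$ in terms of $G_{r,\phi}-F_{\bar r,\phi}$ and the Poisson solution $\Phi_\phi$. The paper's own proof is a one-line reference to Proposition~\ref{SRDO} together with the worked example in Section~\ref{Exemple}, so your write-up is considerably more detailed than what the paper actually provides.

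The only genuine methodological difference is in how $\Phi_\phi$ is bounded. The paper points to the time-integral representation $\Phi(u,r)=\int_0^\infty\EE_r\bigl(G_{r^u_s}(u)-F(u)\bigr)ds$ of Proposition~\ref{SRDO}, from which a bound of the form $C(1+\|u\|_{L^2(I)})$ follows once one knows the fast chain is exponentially ergodic uniformly in the frozen voltage. You instead solve the Poisson equation channelwise and invoke a uniform-in-$y$ lower bound on the spectral gap of $\Ba_j(y)$ on the mean-zero subspace. These are two faces of the same coin (uniform exponential ergodicity $\Leftrightarrow$ uniform spectral gap for a finite irreducible chain with rates in $[\alpha_-,\alpha^+]$), so neither approach buys anything the other does not; your version is arguably more self-contained since it avoids setting up the time-integral and appeals only to finite-dimensional linear algebra. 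Your identification of the ``main obstacle'' is exactly right and is precisely where the structural hypothesis on the rates is used.
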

\noindent In particular, the operators $Q^j_t$ are of trace class in $L^2(I)$ and the Langevin approximation of $u$ is then well defined.
\begin{proposition}\label{lang:def}
Let $\e>0$. The hybrid SPDE:
\begin{equation}\label{lang}
d\tilde u^\e=[\Delta \tilde u^\e + F_{\bar r,\phi}(\tilde u^\e)]dt+\sqrt{\e}B_{\bar r,\phi}(\tilde u^\e)dW_t
\end{equation}
with initial condition $u_0$ and zero Dirichlet boundary condition, has a unique solution with sample paths in $\mathcal{C}([0,T],L^2(I))$. Moreover 
\begin{equation}\label{bound:lang}
\sup_{t\in[0,T]}\EE(\|\tilde u^\e_t\|^2_{L^2(I)})<+\infty
\end{equation}
\end{proposition}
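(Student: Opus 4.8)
The plan is to cast equation (\ref{lang}) in the standard mild/variational framework for semilinear SPDEs with additive (state-dependent but in the diffusion coefficient only) noise on the Hilbert space $L^2(I)$, following the classical theory of \cite{DPZ}, and to verify the two structural hypotheses that theory requires: first, that the deterministic part $\Delta + F_{\bar r,\phi}(\cdot)$ generates a well-behaved (analytic) semigroup perturbed by a Lipschitz nonlinearity; second, that the stochastic convolution is well defined, which is exactly where Proposition \ref{A1} enters. Since the jump component $\bar r$ only changes at a discrete (locally finite) set of times and takes finitely many values, I would argue on each inter-jump interval with $\bar r$ held fixed, solve the resulting SPDE, and then concatenate; between consecutive jumps the coefficients are genuinely autonomous (given the fixed $\bar r$), so this reduces the problem to the fixed-$\bar r$ case.

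First I would establish the regularity of the drift: for fixed $\bar r$, the map $u\mapsto F_{\bar r,\phi}(u)$ is globally Lipschitz from $L^2(I)$ into $L^2(I)$. This is a routine computation: each summand in $F_{\bar r,\phi}(u)$ depends on $u$ only through the scalars $(u,\phi_{z_i})_{L^2(I)}$, which are Lipschitz (indeed linear) functionals of $u$ bounded by $\|\phi_{z_i}\|_{L^2(I)}\|u\|_{L^2(I)}$; since $\mu_j$ and its derivative are Lipschitz by assumption, and $c_\zeta$, $v_\zeta$ are constants, the composition $y\mapsto c_\zeta\mu_j(y)(\zeta)(v_\zeta - y)$ is locally Lipschitz in $y$, but because the relevant arguments $(u,\phi_{z_i})_{L^2(I)}$ may a priori be unbounded one should either invoke the a priori bound on $u$ (as in Proposition \ref{Prop_bound}/\ref{APDMP}) or, more cleanly, note that $\mu_j$ takes values in a compact set (it is a probability vector) so the only unboundedness is the affine factor $(v_\zeta - (u,\phi_{z_i})_{L^2(I)})$, giving a drift with linear growth and Lipschitz increments on bounded sets; a standard truncation/localization argument then upgrades local existence to global. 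The key quantitative bound needed downstream is the linear-growth estimate $\|F_{\bar r,\phi}(u)\|_{L^2(I)} \le a\|u\|_{L^2(I)} + b$, which follows by the same Cauchy--Schwarz estimates.

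Next I would handle the noise term. The operator $\Delta$ with zero Dirichlet conditions generates the analytic semigroup $e^{\Delta t}$ on $L^2(I)$, and by Proposition \ref{A1} the stochastic convolution $W_\Delta^{C}(t) = \int_0^t e^{\Delta(t-s)} B_{\bar r,\phi}(u_s)\,dW_s$ has covariance operator $Q^{\bar r}_t = \int_0^t e^{\Delta(t-s)} C_{\bar r,\phi}(u_s) e^{\Delta(t-s)}\,ds$ whose trace in $L^2(I)$ is bounded by $\sum_{k\ge1}\int_0^t (\alpha\|u_s\|^2_{L^2(I)} + \beta\|u_s\|_{L^2(I)} + \gamma) e^{-2(k\pi)^2(t-s)}\,ds$, and the series $\sum_{k\ge1}\int_0^t e^{-2(k\pi)^2(t-s)}\,ds = \sum_{k\ge1}\frac{1 - e^{-2(k\pi)^2 t}}{2(k\pi)^2} < \infty$ converges since $\sum k^{-2}<\infty$; hence $Q^{\bar r}_t$ is trace class in $L^2(I)$ for every $t\in[0,T]$, so the stochastic convolution defines a well-defined $L^2(I)$-valued process with continuous modification (via the factorization method of \cite{DPZ}, using analyticity of $e^{\Delta t}$ to absorb a fractional power). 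With the Lipschitz drift of linear growth and this trace-class stochastic convolution in hand, the standard fixed-point argument in the Banach space of adapted processes with $\sup_{t\le T}\EE\|X_t\|^2_{L^2(I)} < \infty$ — iterating the mild formulation $\tilde u^\e_t = e^{\Delta t}u_0 + \int_0^t e^{\Delta(t-s)} F_{\bar r,\phi}(\tilde u^\e_s)\,ds + \sqrt\e\int_0^t e^{\Delta(t-s)} B_{\bar r,\phi}(\tilde u^\e_s)\,dW_s$ and using Gronwall — yields existence and uniqueness of a mild solution with paths in $\mathcal{C}([0,T],L^2(I))$ on each inter-jump interval; concatenating over the (a.s. finitely many on $[0,T]$) jumps of $\bar r$ gives the global statement. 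The bound (\ref{bound:lang}) then drops out of the same Gronwall estimate applied to $\EE\|\tilde u^\e_t\|^2_{L^2(I)}$, using the linear growth of $F_{\bar r,\phi}$ and the trace bound of Proposition \ref{A1}.

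The main obstacle I anticipate is not any single estimate but the bookkeeping of the state-dependence of the noise coefficient $B_{\bar r,\phi}(\tilde u^\e)$: Proposition \ref{A1} controls the trace of the \emph{convolved} covariance but one also needs that $u\mapsto B_{\bar r,\phi}(u)$ is, say, Lipschitz as a map into the Hilbert--Schmidt operators from $L^2(I)$ into $L^2(I)$ (or into an appropriate interpolation space), so that the stochastic-convolution term in the fixed-point map is a contraction; establishing this requires understanding how the square root $B_{\bar r,\phi} = C_{\bar r,\phi}^{1/2}$ depends on $u$, and the cleanest route is to exhibit $B_{\bar r,\phi}(u)$ explicitly from the defining integral formula for $C_{\bar r,\phi}$ (it is a finite sum over $i\in\ZZ$ of rank-structured operators built from $\phi_{z_i}$ and the corrector $\Phi$), show each building block is Lipschitz in $u$ into the relevant operator ideal, and then, since after convolution with $e^{\Delta(t-s)}$ everything lands in trace class uniformly, conclude that the fixed-point argument closes; alternatively one appeals directly to the general well-posedness theorem of \cite{DPZ} for SPDEs with Lipschitz coefficients once the trace-class property is verified, which makes this a matter of citation rather than new work. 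I would present the argument in the latter, citation-heavy style, reserving the explicit estimates for the drift's linear growth and for the verification that $\sum_k \int_0^t e^{-2(k\pi)^2(t-s)}\,ds$ converges, these being the only genuinely model-specific points.
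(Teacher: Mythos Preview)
Your proposal is correct and follows essentially the same route as the paper: the paper's own proof is a two-line appeal to the classical SPDE well-posedness theory of \cite{DPZ}, invoking the properties of the Laplacian, the (local) Lipschitz continuity of $F_\phi$, and Proposition \ref{A1} for the trace-class property, with the moment bound (\ref{bound:lang}) deferred to the Gronwall-type estimates used in the proof of Theorem \ref{thm:Lang}. Your write-up is simply a more explicit unpacking of that citation---including the inter-jump concatenation for the hybrid component and the square-root-regularity issue for $B_{\bar r,\phi}$---which the paper leaves entirely implicit.
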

\noindent We can now compare the Langevin approximation to the averaged model $u$.
\begin{theorem}\label{thm:Lang}
Let $T>0$ held fixed. The Langevin approximation satisfies 
\begin{equation}
\lim_{\e\to0}\EE\left(\sup_{t\in[0,T]}\|u_t-\tilde u^\e_t\|^2_{L^2(I)}\right)=0.
\end{equation}
Therefore, it is indeed an approximation of $u$.
\end{theorem}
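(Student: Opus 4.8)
\emph{Proof strategy.} The plan is to realise $u$ and $\tilde u^\e$ on a common filtered probability space by an explicit coupling, to subtract the mild (variation of constants) formulations of the two equations, and to run a Gronwall argument on $v^\e_t:=u_t-\tilde u^\e_t$ \emph{at the level of first moments} before upgrading to the statement. On a space carrying the cylindrical Wiener process $W$ of \eqref{lang} together with an auxiliary Poisson random measure dominating all the jump rates, one builds by thinning the coarse-grained jump process $\bar r$ that drives the averaged model \eqref{Av:phi} and the coarse-grained jump process $\tilde{\bar r}^\e$ that drives \eqref{lang} in such a way that, as long as $\bar r_s=\tilde{\bar r}^\e_s$, the two chains attempt the same transitions and jump together at rate $\min\bigl(\bar\alpha_{jk}((u_s,\phi_{z_i})_{L^2(I)}),\bar\alpha_{jk}((\tilde u^\e_s,\phi_{z_i})_{L^2(I)})\bigr)$; let $\tau^\e$ be the first instant at which the two chains disagree. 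Writing $S(t)=e^{\Delta t}$, this yields $v^\e_t=I^\e_1(t)+I^\e_2(t)+I^\e_3(t)$ with
\[
I^\e_1(t)=\int_0^tS(t-s)\bigl[F_{\bar r_s,\phi}(u_s)-F_{\bar r_s,\phi}(\tilde u^\e_s)\bigr]ds,\qquad I^\e_3(t)=-\sqrt\e\int_0^tS(t-s)B_{\tilde{\bar r}^\e_s,\phi}(\tilde u^\e_s)\,dW_s,
\]
and $I^\e_2(t)=\int_0^tS(t-s)\bigl[F_{\bar r_s,\phi}(\tilde u^\e_s)-F_{\tilde{\bar r}^\e_s,\phi}(\tilde u^\e_s)\bigr]ds$, whose integrand vanishes off $\{\bar r_s\neq\tilde{\bar r}^\e_s\}\subset\{\tau^\e\le s\}$. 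Since $u\in\mathcal C([0,T],H)\subset\mathcal C([0,T],L^2(I))$ and $\tilde u^\e\in\mathcal C([0,T],L^2(I))$ by Proposition \ref{lang:def}, $\sup_{t\le T}\|v^\e_t\|_{L^2(I)}$ is a genuine random variable.

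The noise term is $O(\e)$: because $\|S(t)\|_{L^2(I)\to L^2(I)}\le1$, standard stochastic convolution estimates (the It\^o isometry for the covariance, the factorisation method for the supremum, \cite{DPZ}) together with Proposition \ref{A1} give $\EE\sup_{t\le T}\|I^\e_3(t)\|^2_{L^2(I)}\le C\e\,\EE\sum_{k\ge1}\int_0^T(\alpha\|\tilde u^\e_s\|^2_{L^2(I)}+\beta\|\tilde u^\e_s\|_{L^2(I)}+\gamma)e^{-2(k\pi)^2(T-s)}ds\le C'\e$, using $\sum_{k\ge1}\int_0^Te^{-2(k\pi)^2(T-s)}ds<\infty$ and the fact that, as $\sqrt\e\le1$, the a priori bounds behind Proposition \ref{lang:def} hold uniformly in $\e\in\,]0,1]$; in particular $\sup_{\e\in\,]0,1]}\EE\sup_{t\le T}\|\tilde u^\e_t\|^p_{L^2(I)}<\infty$ for some $p>2$. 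For $I^\e_1$ and $I^\e_2$ we use that the maps $y\mapsto\mu_j(y)(\zeta)$ and $\bar\alpha_{jk}$ are bounded and Lipschitz (their derivatives being bounded, e.g.\ by the Landau--Kolmogorov inequality), while $u\mapsto(u,\phi_{z_i})_{L^2(I)}$ is bounded linear and $\phi_{z_i}\in L^2(I)$, whence $F_{\cdot,\phi}$ is Lipschitz on bounded subsets of $L^2(I)$, with $\|F_{\bar r,\phi}(u)-F_{\bar r,\phi}(u')\|_{L^2(I)}\le C(1+\|u\|_{L^2(I)}+\|u'\|_{L^2(I)})\|u-u'\|_{L^2(I)}$. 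To tame the linear growth I introduce $\sigma^\e_R:=\inf\{t:\|v^\e_t\|_{L^2(I)}>R\}$; since $\|u\|$ is a.s.\ bounded (Proposition \ref{APDMP}), on $[0,\sigma^\e_R]$ one has $\|F_{\bar r_s,\phi}(u_s)-F_{\bar r_s,\phi}(\tilde u^\e_s)\|_{L^2(I)}\le C_R\|v^\e_s\|_{L^2(I)}$ and $\|F_{\bar r_s,\phi}(\tilde u^\e_s)-F_{\tilde{\bar r}^\e_s,\phi}(\tilde u^\e_s)\|_{L^2(I)}\le C_R$.

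The crucial point is that the decoupling probability is controlled \emph{linearly} by the first moment of $\|v^\e\|_{L^2(I)}$. By construction, at any time $s<\tau^\e\wedge\sigma^\e_R$ the rate at which the coupled pair decouples is at most $\sum_i\sum_{j\neq k}\bigl|\bar\alpha_{jk}((u_s,\phi_{z_i})_{L^2(I)})-\bar\alpha_{jk}((\tilde u^\e_s,\phi_{z_i})_{L^2(I)})\bigr|\le C\|v^\e_s\|_{L^2(I)}$ (Lipschitz rates, $\phi_{z_i}\in L^2(I)$, finitely many channels and classes), hence, writing $\psi^\e_R(t):=\EE\sup_{s\le t\wedge\sigma^\e_R}\|v^\e_s\|_{L^2(I)}$, a compensator/optional-stopping argument gives $\PP(\tau^\e\le t\wedge\sigma^\e_R)\le C\int_0^t\psi^\e_R(s)\,ds$. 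Taking $L^2(I)$-norms and expectations in the decomposition of $v^\e_{t\wedge\sigma^\e_R}$, and using $\EE\sup_{u\le T}\|I^\e_3(u)\|\le(\EE\sup_{u\le T}\|I^\e_3(u)\|^2)^{1/2}\le C\e^{1/2}$, $\EE\sup_{u\le t}\|I^\e_1(u\wedge\sigma^\e_R)\|\le C_R\int_0^t\psi^\e_R(s)\,ds$, and $\EE\sup_{u\le t}\|I^\e_2(u\wedge\sigma^\e_R)\|\le C_R\,\EE\!\int_0^{t\wedge\sigma^\e_R}\!\mathbf 1_{\{\tau^\e\le s\}}ds\le C_R T\,\PP(\tau^\e\le t\wedge\sigma^\e_R)\le C'_R\int_0^t\psi^\e_R(s)\,ds$, one arrives at the \emph{linear} inequality $\psi^\e_R(t)\le C\e^{1/2}+C''_R\int_0^t\psi^\e_R(s)\,ds$, so Gronwall's lemma yields $\psi^\e_R(T)\le C\e^{1/2}e^{C''_RT}\to0$ as $\e\to0$, for each fixed $R$. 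Removing the truncation is now routine: on $\{\sigma^\e_R\ge T\}$ the supremum of $\|v^\e\|$ over $[0,T]$ equals the truncated one, while $\PP(\sigma^\e_R<T)\le R^{-2}\EE\sup_{t\le T}\|v^\e_t\|^2_{L^2(I)}\le R^{-2}M$ (with $M:=\sup_\e\EE\sup_{t\le T}\|v^\e_t\|^2_{L^2(I)}<\infty$ by Proposition \ref{APDMP} and the uniform bounds above), so $\EE\sup_{t\le T}\|v^\e_t\|_{L^2(I)}\le\psi^\e_R(T)+M/R$; letting first $\e\to0$ and then $R\to\infty$ gives $\EE\sup_{t\le T}\|v^\e_t\|_{L^2(I)}\to0$. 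Finally $\sup_{t\le T}\|v^\e_t\|^2_{L^2(I)}$ is bounded in $L^{1+\delta}(\Omega)$ uniformly in $\e$ (by the uniform $p$-th moment bound, $p>2$), hence uniformly integrable; combined with the just-proved convergence to $0$ in probability, this upgrades to $\EE\sup_{t\le T}\|v^\e_t\|^2_{L^2(I)}\to0$, which is the claim.

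The genuine obstacle, which forces this two-stage scheme rather than a one-line contraction estimate, is the two-way feedback between the continuous and the discrete components: the transition rates of \emph{both} $\bar r$ and $\tilde{\bar r}^\e$ depend on their own continuous component, so the decoupling time $\tau^\e$ — and thus the extra drift $I^\e_2$ coming from the mismatch of the jump processes — can only be bounded in terms of $v^\e$ itself; what rescues the argument is that this bound is \emph{linear} in the first moment $\EE\|v^\e_s\|_{L^2(I)}$ (thanks to Lipschitz rates), which closes the Gronwall loop at the level of first moments, after which the square and the supremum are recovered by elementary interpolation/uniform integrability once uniform $L^p(\Omega)$ bounds, $p>2$, on $\sup_{t\le T}\|\tilde u^\e_t\|_{L^2(I)}$ are in hand. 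All the remaining ingredients — the trace-class character in $L^2(I)$ of $\int_0^te^{\Delta(t-s)}C_{\bar r,\phi}(u_s)e^{\Delta(t-s)}ds$ and the well-posedness and a priori bounds for \eqref{lang} — are precisely Propositions \ref{A1} and \ref{lang:def}.
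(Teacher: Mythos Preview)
Your argument is correct, but it is substantially more elaborate than the paper's, and the reason is that you are proving a stronger statement than the one the paper actually proves. The paper carries out the proof only in the all-fast case (and says so explicitly at the start of Section \ref{Langevin}), where there is no surviving jump process $\bar r$ at all: the averaged equation is $\partial_t u=\Delta u+F_\phi(u)$ and the Langevin approximation is $d\tilde u^\e=[\Delta\tilde u^\e+F_\phi(\tilde u^\e)]\,dt+\sqrt\e\,B_\phi(\tilde u^\e)\,dW_t$. Consequently there is no $I^\e_2$, no coupling of jump processes, and no decoupling time. The paper simply subtracts the two mild formulations, uses the local Lipschitz bound on $F_\phi$ together with the almost-sure bound on $u$ to get $\|\int_0^t S(t-s)[F_\phi(\tilde u^\e_s)-F_\phi(u_s)]\,ds\|_{L^2(I)}\le C\int_0^t\|\tilde u^\e_s-u_s\|_{L^2(I)}\,ds$, bounds the stochastic convolution directly by Burkholder--Davis--Gundy and Proposition \ref{A1} as $\EE\sup_{t\le T}\|\cdot\|^2_{L^2(I)}\le C$, and closes a Gronwall inequality for $t\mapsto\EE\sup_{s\le t}\|\tilde u^\e_s-u_s\|^2_{L^2(I)}$ in one step at the level of second moments; no first-moment detour, no stopping time, no uniform-integrability upgrade.

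What your proof buys is the genuinely multi-scale situation in which the Langevin approximation carries its own coarse-grained chain $\tilde{\bar r}^\e$ coupled to $\tilde u^\e$, distinct from the $\bar r$ driving $u$. In that reading your coupling/thinning argument and the linear bound $\PP(\tau^\e\le t\wedge\sigma^\e_R)\le C\int_0^t\psi^\e_R(s)\,ds$ on the decoupling probability are exactly what is needed and are not available from the paper's computation. Two remarks: first, if one instead reads the hybrid SPDE \eqref{lang} as sharing the \emph{same} $\bar r$ as \eqref{Av:phi} (the paper is not explicit on this point outside the all-fast case), then your $I^\e_2\equiv0$ and your argument collapses to the paper's direct second-moment Gronwall without any truncation. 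Second, your upgrade step rests on uniform bounds $\sup_{\e\in\,]0,1]}\EE\sup_{t\le T}\|\tilde u^\e_t\|^p_{L^2(I)}<\infty$ for some $p>2$; these are standard for SPDEs of this type but go beyond the $\sup_t\EE\|\tilde u^\e_t\|^2$ bound stated in Proposition \ref{lang:def}, so you should flag that a short additional moment estimate is needed there.
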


\subsection{General Framework}\label{GF}

The arguments developed for the averaging in \cite{GT} as well as those leading to a central limit theorem and a Langevin approximation presented in the present paper are also valid in a more general setting that we describe in the present section. As mentioned in the Introduction, we provide detailed proofs for the stochastic Hodgkin-Huxley model. Their extension to the general class of PDMP described below is straightforward. We first describe a set of assumptions on the infinite dimensional PDMP that will ensure these results. Let $A$ be a self-adjoint linear operator on a Hilbert space $H$ such that there exists a Hilbert basis $\{e_k,k\geq1\}$ of $H$ made up with eigenvectors of $A$
\[
Ae_k=-\l_k e_k
\]
for $k\geq1$ and such that:
\[
\sup_{k\geq1}\sup_{y\in I}|e_k(y)|<\infty
\]
The eigenvalues $\{\l_k,k\geq1\}$ are assumed to form an increasing sequence of positive numbers enjoying the following property
\[
\sum\frac{1}{\l_k}<\infty
\]
Let $\RR$ be a finite space. For any $r\in\RR$, the reaction term $G_r:H\mapsto H$ is globally Lipschitz on $H$ uniformly on $r\in\RR$. That is to say, there exists a constant $L_G>0$ such that for any $(r,u,\tilde u)\in \RR\times H\times H$ we have
\[
\|G_r(u)-G_r(\tilde u)\|_H\leq L_G\|u-\tilde u\|_H
\]
For fixed $u\in H$ let $Q(u):=(q_{r\tilde r}(u))_{(r,\tilde r)\in\RR\times\RR}$ be an intensity matrix on $\RR$ associated to a continuous time Markov chain $(r_t,t\geq0)$. We assume that for $r\neq\tilde r$, the intensity rate functions $q_{r\tilde r}:H\mapsto\R_+$ are uniformly bounded and Lipschitz. There exists two constants $B_q,L_q$ such that for any $(r,\tilde r,u,\tilde u)\in \RR\times \RR\times H\times H$ we have
\[
\sup_{(r,\tilde r)\in\RR\times\RR}\sup_{u\in H}q_{r\tilde r}(u)\leq B_q,\quad |q_{r\tilde r}(u)-q_{r\tilde r}(\tilde u)|\leq L_q\|u-\tilde u\|_H
\]
Moreover the intensity rate functions are supposed to be uniformly bounded below: there exists a positive constant $q_-$such that
\[
\inf_{(r,\tilde r)\in\RR\times\RR}\inf_{u\in H}q_{r\tilde r}(u)\geq q_-
\]
We also assume that there exists a unique pseudo-invariant measure $\mu(u)$ associated to the generator $Q(u)$ which is bounded and Lipschitz in $u$.\\
Let $T>0$ be a fixed time horizon. We consider the following two time-scale evolution problem
\begin{equation}\label{gene_e}
\left\{\begin{array}{ccc}
\partial_t u^\e_t&=&Au^\e_t+G_{r_t}(u^\e_t)\\
\PP(r^\e_{t+h}=\tilde r|r^\e_t=r)&=& \frac1\e q_{r\tilde r}(u^\e_t)h+o(h)
\end{array}
\right.
\end{equation}
for $t\in[0,T]$, with $\e>0$. This is the so called all-fast case: there is only one class of fast transitions in the state space $\RR$ and this is $\RR$ itself. The multi-classes case can be deduced from the all-fast case without difficulty. Let us first state the averaging result.
\begin{theorem}
The process $(u^\e_t,t\in[0,T])$ converges in law in $\mathcal{C}([0,T],H)$ towards $u$ solution of the averaged problem
\[
\partial_t u_t=Au_t+F(u_t)
\]
for $t\in[0,T]$, where the averaged reaction term is given for $u\in H$ by
\[
F(u)=\int_\RR G_r(u)\mu(u)(dr)
\]
\end{theorem}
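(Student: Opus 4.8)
The plan is to follow the classical martingale-problem approach to stochastic averaging, mirroring the argument of \cite{GT} for the Hodgkin-Huxley PDMP and adapting it to the abstract hypotheses of Section \ref{GF}. The first step is to establish an a priori bound on $(u^\e)$ analogous to Proposition \ref{Prop_bound}: using the mild formulation $u^\e_t = e^{At}u_0 + \int_0^t e^{A(t-s)}G_{r^\e_s}(u^\e_s)\,ds$, the uniform Lipschitz (hence linear growth) bound on $G_r$ together with the contraction property $\|e^{At}\|\le 1$ and a Gronwall argument yields $\sup_{t\in[0,T]}\|u^\e_t\|_H\le C$ uniformly in $\e$ and $\omega$. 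This bound restricts the dynamics to a bounded set on which all coefficients are bounded and Lipschitz, which is what the later estimates need.

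The second step is tightness of $(u^\e)$ in $\mathcal{C}([0,T],H)$. Because $\partial_t u^\e = Au^\e + G_{r^\e}(u^\e)$ has no fast-oscillating term in the $u$-equation itself (the fast scale only enters the jump rates of $r^\e$), the family $(u^\e)$ is in fact equicontinuous: write $u^\e_t - u^\e_{t'} = (e^{A(t-t')}-I)u^\e_{t'} + \int_{t'}^t e^{A(t-s)}G_{r^\e_s}(u^\e_s)\,ds$ and use the a priori bound plus strong continuity of the semigroup to control both terms; smoothing of $e^{At}$ for $t>0$ (available since $\sum 1/\lambda_k<\infty$ forces $\lambda_k\to\infty$) gives the needed compactness in $H$. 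So $(u^\e)$ is tight and we may extract a subsequential limit $\bar u$ with paths in $\mathcal{C}([0,T],H)$.

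The third and central step is identification of the limit. Fix a test function $\psi\in\mathcal{C}^2_b(H)$ and consider $M^\e_\psi(t) = \psi(u^\e_t) - \psi(u_0) - \int_0^t \big(\tfrac{d\psi}{du}(u^\e_s)[Au^\e_s + G_{r^\e_s}(u^\e_s)]\big)\,ds$, which is a martingale. The obstruction to passing to the limit is the term $\tfrac{d\psi}{du}(u^\e_s)[G_{r^\e_s}(u^\e_s)]$, which depends on the fast variable $r^\e_s$ and does \emph{not} converge pathwise. The standard remedy is a corrector / perturbed-test-function argument: since $Q(u)$ is weakly irreducible with invariant law $\mu(u)$, for each fixed $u$ the Poisson equation $Q(u)\chi(\cdot,u) = -\big(G_{\cdot}(u) - F(u)\big)$ admits a (centered) solution $\chi(r,u)$, bounded and Lipschitz in $u$ by the uniform ellipticity bound $q_-$ and the Lipschitz/boundedness hypotheses on $q_{r\tilde r}$ and $\mu$. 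One then shows that $\sqrt\e\,\tfrac{d\psi}{du}(u^\e_s)[\chi(r^\e_s,u^\e_s)]$ is negligible and that, after adding the corrector to $M^\e_\psi$ and expanding via the generator $\mathcal{A}^\e = \partial_t + \tfrac1\e Q(u)$, the $O(1/\e)$ contribution cancels and the remaining $O(1)$ term is precisely $\tfrac{d\psi}{du}(u^\e_s)[F(u^\e_s)]$ plus an error tending to $0$ in $L^1$. Passing to the limit along the subsequence (using the a priori bound, boundedness of $\psi$, continuity of $\tfrac{d\psi}{du}$, $F$, and the convergence $u^\e\Rightarrow\bar u$) shows that $\psi(\bar u_t) - \psi(u_0) - \int_0^t \tfrac{d\psi}{du}(\bar u_s)[A\bar u_s + F(\bar u_s)]\,ds$ is a martingale.

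Finally, one checks well-posedness of the limiting (deterministic PDE-valued) martingale problem: since $F$ is globally Lipschitz on $H$ and $A$ generates a contraction semigroup, the averaged equation $\partial_t u = Au + F(u)$ has a unique mild solution in $\mathcal{C}([0,T],H)$ (Picard iteration), and this solution solves the martingale problem; uniqueness of the martingale problem then forces $\bar u = u$, and since every subsequential limit is the same, the whole family $(u^\e)$ converges in law to $u$. The main obstacle is the identification step — more precisely, controlling the corrector term and verifying the regularity ($H$-Lipschitz continuity, uniform boundedness) of the solution $\chi$ to the Poisson equation, which is where the hypotheses $q_{r\tilde r}$ bounded/Lipschitz, $\inf q_{r\tilde r}\ge q_->0$, and $\mu(u)$ Lipschitz are all used; everything else is routine given the uniform a priori bound.
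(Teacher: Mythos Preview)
Your outline is correct and follows essentially the same route as the paper, which for this general statement simply refers back to the argument of \cite{GT}: an a priori bound (Proposition~\ref{Prop_bound}), tightness in $\mathcal{C}([0,T],H)$ via Aldous-type criteria plus the spectral/semigroup smoothing, identification of the limit through a Poisson-equation corrector (this is exactly the mechanism reviewed around equation~(\ref{Poisson}) in the proof of Proposition~\ref{bound:z}), and uniqueness of the averaged PDE. One small slip: in the perturbed-test-function step the corrector should be added at order $\e$, not $\sqrt\e$; since the fast generator enters as $\tfrac1\e Q(u)$, one takes $\phi=\psi+\e\,\tfrac{d\psi}{du}[\chi(\cdot,u)]$ so that the $\tfrac1\e Q$ applied to the $\e$-term produces the $O(1)$ cancellation $G_r-F$. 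The $\sqrt\e$ scaling is the one used later for the CLT, not here.
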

\noindent Let $z^\e$ be the fluctuation around the averaged limit. For $t\in[0,T]$, $z_t$ is defined as
\[
z^\e_t=\frac{u^\e_t-u_t}{\sqrt{\e}}
\]
Then, the central limit theorem takes the following form.
\begin{theorem}
The process $(z^\e_t,t\in[0,T])$ converges in law in $\mathcal{C}([0,T],H)$ towards $z$ solution of the SPDE
\[
dz_t=[Az_t+\frac{dF}{du}(u_t)[z_t]]dt+B(u_t)dW_t
\]
The diffusion operator $C(u)=B(u)B(u)^*$ is characterized by
\begin{equation*}
<C(u)e^*_j,e_i>
=\int_{\RR}(G_r(u)-F(u),e_i)(\Phi(r,u),e_j)\mu(u)(dr)
\end{equation*}
where $e^*_j(e_i)=\delta_{ij}$ for $i,j\geq1$. Moreover $\Phi$ is the unique solution of
\begin{equation}\label{def:Phi:ms}
\left\{\begin{array}{cccc}
\Ba(u)\Phi(r,u)&=&-(G_r(u)-F(u))& \forall (u,r)\in H\times\RR\\
\int_\R\Phi(r,u)\mu(u)(dr)&=&0&
\end{array}\right.
\end{equation}
\end{theorem}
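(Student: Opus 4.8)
This convergence is the general-framework transcription of Theorem~\ref{DEDP}, and its proof follows the same two-step scheme carried out below for the stochastic Hodgkin--Huxley model: first prove tightness of the family $(z^\e)_{\e\in(0,1]}$ in $\mathcal{C}([0,T],H)$, then identify each weak limit point as a solution of the martingale problem attached to the SPDE $dz_t=[Az_t+\frac{dF}{du}(u_t)[z_t]]dt+B(u_t)dW_t$, and conclude by well-posedness of that martingale problem (valid since, after convolution with the semigroup generated by $A$, the covariance operator becomes trace class, exactly as discussed after Theorem~\ref{DEDP}). Since $u$ is the deterministic solution of $\partial_t u_t=Au_t+F(u_t)$ and $z^\e_0=0$, subtracting this from the first line of \eqref{gene_e} yields
\[
z^\e_t=\int_0^t e^{A(t-s)}\frac{1}{\sqrt\e}\bigl(G_{r^\e_s}(u^\e_s)-F(u_s)\bigr)\,ds,\qquad t\in[0,T],
\]
and one splits the integrand as $\frac{1}{\sqrt\e}\bigl(G_{r^\e_s}(u^\e_s)-F(u^\e_s)\bigr)+\frac{1}{\sqrt\e}\bigl(F(u^\e_s)-F(u_s)\bigr)$, the first summand being the fast, $\mu(u^\e_s)$-centered term that generates the noise and the second the slow drift. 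The central tool is the corrector $\Phi$ of \eqref{def:Phi:ms}: weak irreducibility of $\Ba(u)$ puts $G_r(u)-F(u)$ in the range of $\Ba(u)$ for each $u$, so the Fredholm alternative provides a unique centered $\Phi(\cdot,u)$, bounded and Lipschitz in $u$ (and componentwise differentiable) by the assumed regularity of the rates and of $\mu$, exactly as in \cite{GT}.

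First I would upgrade the averaging convergence into the quantitative estimate $\sup_\e\EE\bigl(\sup_{t\le T}\|z^\e_t\|_H^2\bigr)<\infty$: applying Dynkin's formula to $(u,r)\mapsto\Phi(r,u)$ along $(u^\e,r^\e)$ --- whose fast generator part is $\frac1\e\Ba(u^\e)$ with $\Ba(u)\Phi(r,u)=-(G_r(u)-F(u))$ --- one rewrites $\frac1\e\int_0^t(G_{r^\e_s}(u^\e_s)-F(u^\e_s))\,ds$ as a martingale plus boundary and remainder terms, all of order $\sqrt\e$ once the analogue of Proposition~\ref{Prop_bound} is used, and then closes via the global Lipschitz bound on $F$ and Gronwall's lemma. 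This $L^2$-bound makes $z^\e$ an honest $O(1)$ object and justifies $\frac1{\sqrt\e}(F(u^\e_s)-F(u_s))=\int_0^1\frac{dF}{du}(u_s+\theta\sqrt\e z^\e_s)[z^\e_s]\,d\theta\to\frac{dF}{du}(u_s)[z^\e_s]$, using the $\mathcal{C}^1$-regularity of $F$. The same Dynkin identity (after moving $A$ onto the semigroup by an integration by parts in time) recasts the equation as $z^\e_t=\int_0^t e^{A(t-s)}\frac{dF}{du}(u_s)[z^\e_s]\,ds+\int_0^t e^{A(t-s)}\,dM^\e_s+R^\e_t$, where $M^\e$ is $\sqrt\e$ times the Dynkin martingale of $\Phi$ and $R^\e_t$ gathers $\sqrt\e\bigl(\Phi(r^\e_0,u^\e_0)-\Phi(r^\e_t,u^\e_t)\bigr)$, the Taylor remainder, the term $\int_0^t A e^{A(t-s)}\sqrt\e\,\Phi(r^\e_s,u^\e_s)\,ds$, and $\sqrt\e\int_0^t e^{A(t-s)}\frac{d\Phi}{du}(r^\e_s,u^\e_s)[\partial_s u^\e_s]\,ds$. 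Tightness in $\mathcal{C}([0,T],H)$ then follows: the stochastic convolution is handled by the factorization method using $\sum_k\l_k^{-1}<\infty$ and the uniform bound $\EE\langle M^\e\rangle_t\le Ct$ (its limit being governed by the carr\'e du champ of $\Ba(u)$); the drift is controlled by the Lipschitz bound on $dF/du$ and the moment bound; $R^\e\to0$ in probability; and an Aldous--Kurtz equicontinuity estimate completes the argument.

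To identify the limit I would use the perturbed test function method. For $\psi:H\to\R$ bounded and twice Fr\'echet differentiable, set $\psi^\e(z,u,r)=\psi(z)+\sqrt\e\,\frac{d\psi}{dz}(z)[\Phi(r,u)]$ (adding an $\e$-order second corrector, if necessary, to annihilate the residual centered term) and compute the action of the generator of $(t,z^\e_t,u^\e_t,r^\e_t)$ on $\psi^\e$. The $\e^{-1}$ contribution cancels by \eqref{def:Phi:ms}; the $\e^{-1/2}$ contribution cancels using \eqref{def:Phi:ms} together with $\partial_t z^\e=Az^\e+\e^{-1/2}(G_{r^\e}(u^\e)-F(u))$; the surviving $O(1)$ part, after ergodic averaging of the fast chain against $\mu(u_s)$ and the centering condition, converges to $\frac{d\psi}{dz}(z_s)[Az_s+\frac{dF}{du}(u_s)[z_s]]+\mathrm{Tr}\,\frac{d^2\psi}{dz^2}(z_s)\,C(u_s)$ with precisely $C(u)=\int_\RR(G_r(u)-F(u))\otimes\Phi(r,u)\,\mu(u)(dr)$; the remaining terms are $O(\sqrt\e)$. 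Since $\psi^\e\to\psi$ uniformly, passing to the limit along a tight subsequence shows that the limit solves the martingale problem, and uniqueness of that problem concludes.

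The main obstacle I anticipate is the regularity bookkeeping imposed by the unbounded operator $A$: the terms $\frac{d\Phi}{du}(r^\e_s,u^\e_s)[\partial_s u^\e_s]$ and $A e^{A(t-s)}\Phi(r^\e_s,u^\e_s)$ above, as well as their analogues in the computation of the generator on $\psi^\e$, involve $\partial_s u^\e_s=Au^\e_s+G_{r^\e_s}(u^\e_s)$, which is not bounded in $H$ because $u^\e_s\notin\mathcal{D}(A)$ in general. As for the Langevin approximation in Section~\ref{Langevin}, I would circumvent this by first running the whole argument for the mollified model (reaction terms $G_{r,\phi}$ and $F_\phi$, for which the relevant solutions and the corrector $\Phi$ are more regular) and then removing the mollification, keeping elsewhere only the mild formulation so that all occurrences of $A$ sit inside the smoothing semigroup. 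A minor accompanying point is to verify that the symmetric part of $C(u)$ --- which is all that enters, since $C=BB^*$ --- is nonnegative and admits a square root $B(u)$; this follows from the identity $C(u)+C(u)^*=\int_\RR\Gamma_{\Ba(u)}(\Phi)(r)\,\mu(u)(dr)\ge0$, where $\Gamma_{\Ba(u)}$ denotes the carr\'e du champ of $\Ba(u)$.
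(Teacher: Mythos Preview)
Your identification step via the perturbed test function $\psi^\e(z,u,r)=\psi(z)+\sqrt\e\,\frac{d\psi}{dz}(z)[\Phi(r,u)]$ is exactly the method the paper uses (Section~\ref{Id}), including the cancellation of the $\e^{-1}$ and $\e^{-1/2}$ contributions via the Poisson equation~\eqref{def:Phi:ms} and the ergodic averaging of the $O(1)$ piece against $\mu(u_s)$; your remark on the positivity of $C(u)$ also matches the paper's short argument in Section~\ref{Diff_op}.

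Where you diverge is in the tightness step. You go through a mild/Dynkin representation, writing $z^\e$ as a stochastic convolution against the Dynkin martingale of $\Phi$ plus remainders, and invoke the factorization method. The paper instead works entirely with energy estimates in the strong formulation: it differentiates $\|u^\e_t-u_t\|^2_H$, $(z^\e_t,e_k)^2$, and $\|z^\e_{\tau+\theta}-z^\e_\tau\|^2_H$ directly, and controls the centered piece $\langle G_{r^\e_s}(u^\e_s)-F(u^\e_s),\,\cdot\,\rangle$ by solving a \emph{scalar} Poisson equation (Proposition~\ref{bound:z}), then checks Aldous's criterion and the Hilbert-space tightness criterion (Criteria~\ref{GCT}--\ref{TH}). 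The payoff of the paper's route is precisely that it sidesteps the obstacle you flag: because the Poisson correction is applied to real-valued functionals rather than to the $H$-valued corrector $\Phi$, the term $\frac{d\Phi}{du}(r^\e_s,u^\e_s)[\partial_s u^\e_s]$ never appears, and no mollification or extra regularity of $u^\e$ is needed for tightness. Your approach is viable but heavier on the bookkeeping you describe; the paper's energy method is more elementary here. Note also that the paper only proves $\sup_{t}\EE\|z^\e_t\|_H^2\le C$ (not the sup inside the expectation), which is all that Criterion~\ref{TH} requires.
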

\noindent We can then consider the Langevin approximation $(\tilde u^\e_t,t\in[0,T])$ solution of the SPDE
\[
d\tilde u^\e_t=[A\tilde u^\e_t+F(\tilde u^\e_t)]dt+\sqrt{\e}B(\tilde u^\e_t)dW_t
\]
for $t\in[0,T]$. The Langevin approximation is well defined in $\mathcal{C}([0,T],H)$ and the following proposition holds.
\begin{proposition}
Let $T>0$. The Langevin approximation $(\tilde u^\e_t,t\in[0,T])$ satisifes
\begin{equation}
\lim_{\e\to0}\EE\left(\sup_{t\in[0,T]}\|u_t-\tilde u^\e_t\|^2_H\right)=0
\end{equation}
Therefore it is indeed an approximation of the averaged process $u$.

\end{proposition}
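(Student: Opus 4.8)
The plan is to pass both the averaged equation and the Langevin equation to their mild formulations and to estimate the difference $\rho^\e_t:=\tilde u^\e_t-u_t$ by a Gronwall argument in which the only $\e$-dependent term comes from the stochastic convolution and is shown to be $O(\sqrt\e)$. In mild form, for $t\in[0,T]$,
\[
\rho^\e_t=\int_0^te^{A(t-s)}\big(F(\tilde u^\e_s)-F(u_s)\big)\,ds+\sqrt\e\int_0^te^{A(t-s)}B(\tilde u^\e_s)\,dW_s=:D^\e_t+\sqrt\e\,Z^\e_t .
\]
First I would record two elementary facts. Since $A$ is self-adjoint with eigenvalues $-\lambda_k<0$, the semigroup $\{e^{At}\}$ is a contraction on $H$. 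And since each $G_r$ is globally Lipschitz on $H$ uniformly in $r\in\RR$ and $\mu$ is Lipschitz, the averaged reaction $F(\cdot)=\int_\RR G_r(\cdot)\,\mu(\cdot)(dr)$ is globally Lipschitz on $H$ with some constant $L_F$. Together these give the pathwise inequality $\|\rho^\e_t\|_H\le L_F\int_0^t\|\rho^\e_s\|_H\,ds+\sqrt\e\,\|Z^\e_t\|_H$.

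The core of the argument is a maximal estimate for $Z^\e$ uniform in $\e\in(0,1]$, namely $\EE\sup_{t\in[0,T]}\|Z^\e_t\|_H^2\le C$ with $C=C(T)$. Its two ingredients are: (i) the a priori moment bounds underlying well-posedness of the Langevin equation, which hold uniformly in $\e\in(0,1]$ since the noise carries the harmless factor $\sqrt\e\le1$, so that $\sup_{\e\in(0,1]}\sup_{t\in[0,T]}\EE\|\tilde u^\e_t\|_H^p<\infty$ for $p$ large enough, together with well-posedness of the averaged equation, which gives $\sup_{t\in[0,T]}\|u_t\|_H<\infty$; and (ii) the trace estimate behind Proposition~\ref{A1}, which in the present setting reads
\[
\mathrm{Tr}\int_0^te^{A(t-s)}C(\tilde u^\e_s)e^{A(t-s)}\,ds\le\sum_{k\ge1}\int_0^t\big(a\|\tilde u^\e_s\|_H^2+b\|\tilde u^\e_s\|_H+c\big)e^{-2\lambda_k(t-s)}\,ds ,
\]
which is finite and bounded by $C'\big(1+\sup_{s\le T}\|\tilde u^\e_s\|_H^2\big)$ because $\sum_k\lambda_k^{-1}<\infty$. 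I would then run the factorization method: fix $\gamma\in(0,1/2)$, bound $\int_0^s(s-r)^{-2\gamma}\,\mathrm{Tr}\big(e^{A(s-r)}C(\tilde u^\e_r)e^{A(s-r)}\big)\,dr$ using (ii), apply a Burkholder-type inequality and the stochastic Fubini theorem, and take expectations using (i); this produces the claimed uniform bound. I expect step (ii) together with the factorization to be the main obstacle, precisely because $C(u)$ is not of trace class on $H$, so one has to exploit the smoothing of $e^{At}$ and the summability of $\lambda_k^{-1}$ rather than a crude estimate.

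Finally I would close by Gronwall. Squaring the pathwise inequality, taking suprema over $[0,t]$, using Cauchy--Schwarz and the maximal bound on $Z^\e$, one gets, with $\phi(t):=\EE\sup_{s\le t}\|\rho^\e_s\|_H^2$ (finite by the a priori bounds),
\[
\phi(t)\le 2L_F^2T\int_0^t\phi(s)\,ds+2\e\,C ,
\]
and Gronwall's lemma gives $\EE\sup_{t\in[0,T]}\|u_t-\tilde u^\e_t\|_H^2=\phi(T)\le 2C\,e^{2L_F^2T^2}\,\e\longrightarrow0$ as $\e\to0$, which is the assertion.
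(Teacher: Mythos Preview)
Your proposal is correct and follows the same route as the paper: write the difference in mild form, control the drift term via the Lipschitz property of $F$ and the contraction of the semigroup, bound the stochastic convolution uniformly in $\e$ using the trace estimate of Proposition~\ref{A1} together with the a priori moment bounds on $\tilde u^\e$, and close by Gronwall. Your use of the factorization method for the maximal bound on $Z^\e$ is in fact the more careful way to justify that step; the paper applies Burkholder--Davis--Gundy directly, which is somewhat formal since the integrand $e^{A(t-s)}B(\tilde u^\e_s)$ depends on $t$ and the stochastic convolution is not a martingale in $t$.
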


\section{Proofs}\label{Preuve}

We now proceed to prove Theorem \ref{DEDP} and Theorem \ref{thm:Lang}. In Theorem \ref{DEDP}, we want to prove the convergence in distribution of the process $z^\e$ when $\e$ goes to zero. As usual in this context such a proof can be divided in two parts: the proof of tightness of the family $\{z^\e,\e\in]0,1]\}$ which implies that there exists a convergent subsequence and the identification of the limit which allows us to characterize the limit and prove its uniqueness. We write in full details the proof in the all fast case, that is when all the states in $E$ communicate at fast rates of order $\frac1\e$. In this case there is a unique class of fast communications which is the whole state space $E$ (in this case $l=1$ w.r.t. the notation of section \ref{Sing}). The multi-scale case (when $l>1$) can be deduced from the all fast case and is mainly a complication in the notations. The comments on the extension to the multi-scale case are postponed to Section \ref{Multi}. Section \ref{Langevin} is devoted to the proof of Theorem \ref{thm:Lang}.

\subsection{The all fast case}\label{All}

For the sake of clarity, we first rewrite the statement of Theorem \ref{DEDP} in the all fast case. When all states in $E$ communicate at fast rates, for each $\e>0$, the generator of the process $(u^\e,r^\e)$ is given by 
\begin{equation}\label{gene_eps_fast}
\mathcal{A^\e}f(u^\e_t,r^\e_t)=\frac{d}{dt}f(u^\e_\cdot,r^\e_t)(t)+\frac1\e\Ba (u^\e_t)f(u^\e_t,\cdot)(r^\e_t)
\end{equation}
where the slow part of the generator reduces to zero and
\begin{equation*}
\Ba(u^\e_t) f(u^\e_t,r^\e_t)=\sum_{i\in\ZZ}\sum_{\xi\in E}[f(u^\e_t,r^\e_t(r^\e_t(i)\to\zeta))-f(u^\e_t,r^\e_t)]\alpha_{r^\e_t(i),\xi}(u^\e_t(z_i))
\end{equation*}
where $r_t(r_t(i)\to\zeta)$ is the element of $\RR$ with $r_t(r_t(i)\to\zeta)(j)$ equal to $r_t(j)$ if $j\neq i$ and to $\zeta$ if $j=i$. For any $i\in\ZZ$ with $u\in H$ held fixed, the Markov process $r(i)$ has a unique stationary distribution $\mu(u(z_i))$. Then the process $(r(i),i\in\ZZ)$ has the following stationary distribution
\begin{equation*}
\mu(u)=\bigotimes_{i\in\ZZ} \mu(u(z_i))
\end{equation*}
The averaged generalized function reduces to
\begin{eqnarray}\label{F}
F(u)&=&\int_{\RR}G_r(u)\mu(u)(dr)\\
&=&\frac{1}{N}\sum_{\xi\in E}\sum_{i\in \ZZ}c_\xi \mu(u(z_i))(\xi)(v_\xi-u(z_i))\delta_{z_i}\nonumber
\end{eqnarray}
The averaged limit $u$ is solution of the PDE
\[
\partial_tu_t=\D u_t+F(u_t)
\]
with initial condition $u_0$ and zero Dirichlet boundary conditions. In this case Theorem \ref{DEDP} reads as follows.
\begin{theorem}\label{DEDP_fast}
When $\e$ goes to $0$  the process $z^\e$ converges in distribution in $\mathcal{C}([0,T],H)$ towards a process $z$. This process is uniquely determined as the solution of the following martingale problem: for any measurable, bounded and twice Fr\'echet differentiable function $\psi: H\to\R$, the process 
\begin{equation}\label{m:prob:fast}
\bar N_{\psi}(t):=\psi(z_t)-\int_0^t\bar{\mathcal{G}}^1(s)\psi(z_s)ds
\end{equation}
where
\begin{eqnarray*}\label{gen1:moy}
&~&\bar{\mathcal{G}}^1(t)\psi(z)\\
&=&\frac{d\psi\mathbf{1}}{dz}(z)[\D z+\frac{dF}{du}(u_t)[z]]+{\rm Tr }\frac{d^2\psi\mathbf{1}}{dz^2}(z)C(u_t)
\end{eqnarray*}
for $t\in[0,T]$, is a martingale. We denote by $\mathbf{1}$ the vector $(1,\cdots,1)^t\in\R^N$. The diffusion operator $C(u)=B(u)B(u)^*$ is characterized by
\begin{eqnarray*}
&~&<C(u)e^*_j,e_i>\\
&=&\int_{\RR}<G_r(u)-F(u),e_i><\Phi(r,u),e_j>\mu(u)(dr)
\end{eqnarray*}
where $e^*_j(e_i)=\delta_{ij}$ for $i,j\geq1$. $\Phi$ is the unique solution of the equation
\begin{equation}\label{def:Phi:ms}
\left\{\begin{array}{ccc}
\Ba(u)\Phi(r,u)&=&-(G_r(u)-F(u))\\
\int_\R\Phi(r,u)\mu(u)(dr)&=&0
\end{array}\right.
\end{equation}
\end{theorem}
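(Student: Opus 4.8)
\emph{Overall strategy.} The plan is to follow the classical two-step scheme for a functional central limit theorem: first establish tightness of the family $\{z^\e,\e\in]0,1]\}$ in $\mathcal{C}([0,T],H)$, then identify every subsequential limit as a solution of the martingale problem \eqref{m:prob:fast}, and finally invoke uniqueness of that martingale problem to upgrade subsequential convergence to convergence of the whole family. The main device is the perturbed test function (corrector) method of Papanicolaou--Stroock--Varadhan, adapted to the infinite dimensional PDMP framework as in \cite{GT,RTW}. The central object is the corrector $\Phi$ of \eqref{def:Phi:ms}: for each fixed $u\in H$, $\Ba(u)$ is, in the $r$ variable, the generator of an irreducible continuous-time Markov chain on the finite set $\RR$ (the non-zero rates being bounded below by $\alpha_-$), hence invertible on the orthogonal complement of the constants in $L^2(\mu(u))$ with an inverse bounded uniformly in $u$; since $r\mapsto G_r(u)-F(u)\in H^*$ is centered under $\mu(u)$, equation \eqref{def:Phi:ms} has a unique solution $\Phi(\cdot,u)$, which is again a finite combination of the Dirac masses $\delta_{z_i}$, bounded in $H^*$ uniformly in $u$, and Lipschitz (together with its $u$-derivative) in $u$ because the rate functions and $\mu(\cdot)$ are. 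We shall use repeatedly the identity $G_r(u)-F(u)=-\Ba(u)\Phi(r,u)$.

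\emph{Tightness.} In weak form, for $v\in\mathcal{D}(\D)$, $\langle z^\e_t,v\rangle=\int_0^t\langle z^\e_s,\D v\rangle\,ds+\e^{-1/2}\int_0^t\langle G_{r^\e_s}(u^\e_s)-F(u_s),v\rangle\,ds$. Splitting $G_{r^\e_s}(u^\e_s)-F(u_s)=(G_{r^\e_s}(u^\e_s)-F(u^\e_s))+(F(u^\e_s)-F(u_s))$, the second bracket is $O(\sqrt\e\,\|z^\e_s\|_H)$ in $H^*$ by the Lipschitz property of $F$, while for the first we apply Dynkin's formula for the PDMP $(u^\e,r^\e)$ to the scalar functional $(u,r)\mapsto\langle\Phi(r,u),v\rangle$ and use $\Ba(u)\Phi(r,u)=-(G_r(u)-F(u))$: this turns $\e^{-1/2}\int_0^t\langle G_{r^\e_s}(u^\e_s)-F(u^\e_s),v\rangle\,ds$ into $\sqrt\e$ times a martingale --- whose bracket is $O(1)$, the $1/\e$ in the jump rates being compensated by the $\e$ produced by $(\sqrt\e)^2$ --- plus $\sqrt\e$ times an explicit boundary term and a remainder coming from the transport part of the generator. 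This is the step where the apparent $\e^{-1/2}$ blow-up is revealed to be spurious; combined with the a priori bound of Proposition \ref{Prop_bound} it yields $\sup_{\e}\EE\sup_{t\le T}\|z^\e_t\|_H^2<\infty$ together with a H\"older-type control of the time increments. Compact containment in $H$ and an Aldous--Rebolledo type criterion --- equivalently, the factorization estimates for stochastic convolutions of \cite{DPZ}, using that the convolved operators $Q_t=\int_0^t e^{\D(t-s)}C(u_s)e^{\D(t-s)}\,ds$ are of trace class --- then give tightness in $\mathcal{C}([0,T],H)$; as in \cite{GT,RTW}, the fact that the reaction terms are only $H^*$-valued (Dirac masses) forces all estimates to be organized through the Gelfand triple $H\subset L^2(I)\subset H^*$.

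\emph{Identification of the limit and uniqueness.} Fix a bounded, twice Fr\'echet differentiable $\psi:H\to\R$ (of cylindrical type, so that $\D$ may be transferred onto the test directions), put $z=(u-u_t)/\sqrt\e$ and introduce the perturbed functional $\psi^\e(t,u,r)=\psi(z)+\sqrt\e\,\psi_1(t,u,r)+\e\,\psi_2(t,u,r)$, with $\psi_1(t,u,r)=\frac{d\psi}{dz}(z)[\Phi(r,u_t)]$ and $\psi_2$ the solution of a second Poisson equation $\Ba(u_t)\psi_2(t,u,\cdot)(r)=h_\psi(t,z,r)-\int h_\psi(t,z,\cdot)\,d\mu(u_t)$, where $h_\psi$ collects the centered order-one terms generated by $\psi$ and $\psi_1$. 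A direct computation with the generator \eqref{gene_eps_fast} shows that the choice of $\psi_1$ cancels the order $\e^{-1/2}$ term --- here the Lipschitz continuity of $u\mapsto\Ba(u)$ produces, after averaging against $\mu(u_t)$, exactly the $\mu$-derivative contribution to $\frac{dF}{du}(u_t)[z]$ --- that the choice of $\psi_2$ removes the remaining centered order-one terms, and that the surviving $\mu$-averages reassemble into $\bar{\mathcal{G}}^1(s)\psi(z)$: the transport and drift corrections give $\frac{d\psi\mathbf{1}}{dz}(z)[\D z+\frac{dF}{du}(u_t)[z]]$, and $\int\frac{d^2\psi}{dz^2}(z)[G_r(u_t)-F(u_t),\Phi(r,u_t)]\,\mu(u_t)(dr)$ gives $\mathrm{Tr}\,\frac{d^2\psi\mathbf{1}}{dz^2}(z)C(u_t)$. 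Since $\psi^\e$ lies in the domain of the extended generator of $(u^\e,r^\e)$, the process $\psi^\e(t,u^\e_t,r^\e_t)-\int_0^t(\partial_s+\mathcal{A}^\e)\psi^\e(s,u^\e_s,r^\e_s)\,ds$ is a martingale; by the moment bounds of the tightness step, $\psi^\e(t,u^\e_t,r^\e_t)=\psi(z^\e_t)+O(\sqrt\e)$ uniformly in $t$, and $(\partial_s+\mathcal{A}^\e)\psi^\e=\bar{\mathcal{G}}^1(s)\psi(z^\e_s)+R^\e_s$ with $\EE\int_0^T|R^\e_s|\,ds\to0$. Passing to the limit along a tight subsequence, any limit point $z$ makes $\bar N_\psi$ a martingale for every admissible $\psi$. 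Finally, \eqref{m:prob:fast} is the martingale problem of the mild solution of the linear, time-inhomogeneous Ornstein--Uhlenbeck type SPDE \eqref{eq:dev}, for which existence and uniqueness follow from \cite{DPZ} once the convolved covariance $Q_t$ is of trace class, as noted in Section \ref{Main} (see also Proposition \ref{A1}). Hence \eqref{m:prob:fast} has a unique solution, and the whole family $z^\e$ converges in distribution to it.

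\emph{Main obstacle.} The delicate step is the tightness, and within it the uniform-in-$\e$ second-moment bound $\sup_{\e}\EE\sup_{t\le T}\|z^\e_t\|_H^2<\infty$: a priori $z^\e$ carries a factor $\e^{-1/2}$, and only the Dynkin/corrector manipulation reveals it to be of order one, but that manipulation involves the transport part of the generator applied to $\Phi$, hence the unbounded operator $\D$ acting on $u^\e$ and point evaluations of $u^\e$ at the channel sites $z_i$; since the PDE for $u^\e$ is forced by Dirac masses, $u^\e_t\notin\mathcal{D}(\D)$ in general, so these contributions must be treated through the mild formulation and the smoothing of the semigroup $e^{\D t}$, and the a priori estimate has to be closed with a localization/stopping argument to break the mild circularity. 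A secondary point is to verify that the second corrector $\psi_2$ is regular enough --- continuous along the PDE flow, with the necessary Fr\'echet differentiability --- to belong to the domain of the extended generator. All of this follows the pattern already developed in \cite{GT} for the averaging and in \cite{RTW} for limit theorems of infinite dimensional PDMPs, to which we refer for the full technical details.
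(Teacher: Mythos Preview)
Your proposal is correct and shares the paper's overall architecture (tightness via a Poisson-equation/corrector trick to neutralize the $\e^{-1/2}$, followed by identification via perturbed test functions), but the identification step is organized genuinely differently. The paper uses only a \emph{first} corrector $\gamma(z,r,t)=\frac{d\psi}{dz}(z)[\Phi(r,u_t)]$, so that the order-one part of the generator, $\mathcal G^1(t,r)\psi(z)$, still depends on $r$; it then introduces an auxiliary scale $\e^1$ and appeals to the averaging mechanism a second time to replace $\int_s^t\mathcal G^1(l,r^{\e}_l)\psi(z^\e_l)\,dl$ by $\int_s^t\bar{\mathcal G}^1(l)\psi(z_l)\,dl$ in the martingale identity. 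Your route instead solves a \emph{second} Poisson equation for $\psi_2$ to absorb the centered order-one terms directly, so that the remainder $R^\e$ is already $o(1)$ without any $\e^1$ argument. Both are standard variants of the Papanicolaou--Stroock--Varadhan scheme; your version is arguably cleaner and more self-contained, at the price of having to check that $\psi_2$ lies in the domain of the extended generator (continuity along the flow, Fr\'echet differentiability in $u$), which you correctly flag. The paper's version trades that regularity check for a somewhat delicate double-limit argument in $(\e,\e^1)$.

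Two minor points where your write-up diverges from what the paper actually proves. First, the paper establishes only $\sup_{t,\e}\EE\|z^\e_t\|_H^2<\infty$ (Proposition~\ref{bound:z}) rather than your stronger $\sup_\e\EE\sup_{t\le T}\|z^\e_t\|_H^2<\infty$; the weaker bound is all that Criterion~\ref{TH} requires, and together with a separate uniform control on the Fourier tails (Proposition~\ref{proche:z}) and an Aldous increment estimate (Proposition~\ref{aldous:z}) it suffices for tightness in $\mathcal C([0,T],H)$. Second, your mention of ``factorization estimates for stochastic convolutions'' is slightly misplaced at the tightness stage: $z^\e$ is a deterministic PDE process between jumps, not a stochastic convolution, so the compact containment in $H$ is obtained in the paper by the spectral tail estimate $(z^\e_t,e_k)^2\le C(1+(k\pi)^2)e^{-(k\pi)^2 t}$ rather than by \cite{DPZ}-type arguments. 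Neither point is a gap in your strategy, but aligning with the paper's estimates would spare you the localization/stopping detour you anticipate.
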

\noindent The associated evolution equation is the following stochastic partial differential equation
\begin{equation}\label{SPDE_fast}
dz_t=\left(\D z_t +\frac{dF}{du}(u_t)[z_t]\right)dt+B(u_t)dW_t
\end{equation}
with initial condition $0$ and zero Dirichlet boundary conditions. We remark that in this case, the limiting process $z$ is solution of an SPDE which is no longer \emph{hybrid}.

\subsection{Tightness}\label{Tight}

To show that the family $\{z^\e,\e\in]0,1]\}$ is tight in $\mathbb{D}([0,T],H)$, we use Aldous criterion (cf. \cite{metivier}) which can be splitted in two parts as follows.

\begin{criterion}[General criterion for tightness \cite{metivier}]\label{GCT}
Let us assume that the family $\{z^\e,\e\in]0,1]\}$ satisfies Aldous's condition: for any $\delta,M>0$, there exist $\eta,\e_0>0$ such that for all stopping times $\tau$ with $\tau+\eta<T$,
\begin{equation}\label{Ald}
\sup_{\e\in]0,\e_0]}\sup_{\theta\in]0,\eta[}\PP(\|z^\e_{\tau+\theta}-z^\e_\tau\|_H\geq M)\leq\delta
\end{equation}
and moreover, for each $t\in[0,T]$, the family $\{z^\e_t,\e\in]0,1]\}$ is tight in $H$. Then $\{z^\e,\e\in]0,1]\}$ is tight in $\mathbb{D}([0,T],H)$.
\end{criterion}
\begin{criterion}[Tightness in a Hilbert space \cite{metivier}]\label{TH}
Let $H$ be a separable Hilbert space endowed with a basis $\{e_k,k\geq1\}$ and for $k\geq1$ define
\[
L_k=span\{e_i,1\leq i\leq k\}
\]
Then  $(z^\e_t,\e\in]0,1])$ is tight in $H$ if, and only if, for any $\delta,\eta>0$ there exist $\rho,\e_0>0$ and $L_{\delta,\eta}\subset \{L_k,k\geq1\}$ such that

\begin{equation}\label{TH1} 
\sup_{\e\in]0,\e_0]} \PP(\|z^\e_t\|_H>\rho)\leq\delta
\end{equation}
\begin{equation}\label{TH2} \sup_{\e\in]0,\e_0]} \PP(d(z^\e_t,L_{\delta,\eta})>\eta)\leq\delta
\end{equation}
where $d(z^\e_t,L_{\delta,\eta})=\inf_{v\in L_{\delta,\eta}}\|z^\e_t-v\|_H$ is the distance of $z^\e$ to the subspace $L_{\delta,\eta}$.
\end{criterion}
\noindent We begin by showing that for a fixed $t\in[0,T]$, the family $\{z^\e_t,\e\in]0,1]\}$ is uniformly bounded in $L^2(\Omega,H^1_0(I))$.
\begin{proposition}\label{bound:z}
There exists a constant $C$ depending only on $T$ but otherwise neither on $t\in[0,T]$ nor on $\e\in]0,1]$ such that
\[
\EE(\|z^\e_t\|^2_H)\leq C
\]
In particular, for any fixed $t\in[0,T]$, condition (\ref{TH1}) is satisfied by the family $\{z^\e_t,\e\in]0,1]\}$. 
\end{proposition}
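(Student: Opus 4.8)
The plan is to start from the mild formulation. Since $u^\e$ and $u$ issue from the same $u_0$, subtracting the mild forms of (\ref{dyn}) and of the averaged PDE $\partial_t u=\D u+F(u)$ and dividing by $\sqrt\e$ gives
\[
z^\e_t=\frac1{\sqrt\e}\int_0^t e^{\D(t-s)}\big(G_{r^\e_s}(u^\e_s)-F(u_s)\big)\,ds .
\]
Inserting $\pm F(u^\e_s)$, I would write $z^\e_t=z^{\e,1}_t+z^{\e,2}_t$, where $z^{\e,1}$ is driven by the ``drift'' term $F(u^\e_s)-F(u_s)$ and $z^{\e,2}$ by the centred ``fluctuation'' term $G_{r^\e_s}(u^\e_s)-F(u^\e_s)$, which has zero mean against the fast invariant measure $\mu(u^\e_s)$.

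For $z^{\e,1}$ I would use that $F$ (see (\ref{F})) is a fixed finite combination of Dirac masses whose coefficients are Lipschitz in the pointwise values $u(z_i)$, so that $\|F(v)-F(w)\|_{H^*}\le C\sup_I|v-w|\le CC_P\|v-w\|_H$ via $H\hookrightarrow\mathcal C(I)$, together with the smoothing bound $\|e^{\D\tau}\delta_{z_i}\|_H\le C\tau^{-3/4}$. Hence $\|e^{\D\tau}\big(F(v)-F(w)\big)\|_H\le C\tau^{-3/4}\|v-w\|_H$, and, since $3/4<1$, a weighted Cauchy--Schwarz in the measure $(t-s)^{-3/4}\,ds$ yields
\[
\EE\|z^{\e,1}_t\|_H^2\le C\int_0^t(t-s)^{-3/4}\,\EE\|z^\e_s\|_H^2\,ds ,
\]
which is exactly the shape needed for a singular Gronwall argument.

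The term $z^{\e,2}$ is the heart of the matter. Here I would use the corrector $\Phi$ of (\ref{def:Phi:ms}), which solves $\Ba(u)\Phi(r,u)=-(G_r(u)-F(u))$, and apply Dynkin's formula for the PDMP $(u^\e,r^\e)$ to $s\mapsto\Phi(r^\e_s,u^\e_s)$, tested against the Hilbert basis $(e_k)$. Because the fast part of the generator is $\frac1\e\Ba$, this rewrites $\frac1\e\big(G_{r^\e_s}(u^\e_s)-F(u^\e_s)\big)$ as the sum of a telescoping term $\Phi(r^\e_0,u^\e_0)-\Phi(r^\e_s,u^\e_s)$, a bounded ``flow-derivative'' term, and the increments of a martingale $M^\e$ whose predictable quadratic variation reproduces, to leading order, the diffusion operator $C$ of Theorem \ref{DEDP}. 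Multiplying back by $\sqrt\e$: on the range of $(u^\e,r^\e)$, which is bounded in $H\times\RR$ by Proposition \ref{Prop_bound}, $\Phi$ and its flow-derivative are bounded in $H^*$, so the telescoping and flow contributions to $z^{\e,2}_t$, once convolved with $e^{\D(t-s)}$, are $O(\sqrt\e)$ in $L^2(\Omega,H)$; and the martingale contribution, after stochastic convolution with the semigroup, should have second moment comparable to ${\rm Tr}\int_0^t e^{\D(t-s)}C(u_s)e^{\D(t-s)}\,ds$, which is finite (the trace in the $L^2(I)$ sense, as noted after Theorem \ref{DEDP}; the two factors $e^{\D(t-s)}$ are what make the corresponding series converge). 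This would give $\EE\|z^{\e,2}_t\|_H^2\le C$ uniformly in $t,\e$. Combining the two bounds, $\EE\|z^\e_t\|_H^2\le C_1+C_2\int_0^t(t-s)^{-3/4}\EE\|z^\e_s\|_H^2\,ds$, and the singular Gronwall (Henry) lemma closes the estimate with $C$ depending only on $T$; Markov's inequality then yields (\ref{TH1}).

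The step I expect to be the main obstacle is precisely the martingale part of $z^{\e,2}$. Since $\Phi$ takes values only in $H^*$ (a finite sum of Dirac masses) and $M^\e$ has a quadratic-variation rate of order $1/\e$, the stochastic convolution $\int_0^t e^{\D(t-s)}\,dM^\e_s$ cannot be controlled in the $H$-norm by crude operator bounds: $\|e^{\D\tau}\|_{H^*\to H}\sim\tau^{-1}$ and $\|e^{\D\tau}\delta_{z_i}\|_H^2\sim\tau^{-3/2}$ are both non-integrable, so the $1/\e$ factor is only tamed by the fine cancellation structure of the noise (the decay of the diagonal entries $\langle C(u)e^*_k,e_k\rangle$ and the double smoothing in $\int_0^t e^{\D(t-s)}C(u_s)e^{\D(t-s)}\,ds$). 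Making this rigorous may require either a careful mode-by-mode second-moment computation that keeps track of the $O(\e)$ decorrelation of the fast process, or a mollification of the corrector (solving $\Ba(u)\Phi^\lambda(r,u)=-e^{\lambda\D}\big(G_r(u)-F(u)\big)$ with $\lambda=\lambda(t-s)$ chosen small), splitting $G_r(u)-F(u)$ into a smooth part handled by $\Phi^\lambda$ and a small remainder absorbed by the heat-kernel singularity.
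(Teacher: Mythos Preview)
Your approach via the mild formulation and the $H^*$-valued corrector $\Phi$ is genuinely different from the paper's, and the obstacle you flag in the last paragraph is real: controlling the stochastic convolution $\int_0^t e^{\D(t-s)}\,dM^\e_s$ in the $H$-norm is delicate precisely because, for the Dirac-mass model, $Q_t$ is trace class only in $L^2(I)$ and \emph{not} in $H$ (the paper says so explicitly when motivating the mollified Langevin approximation). Your proposed fixes (mode-by-mode second moments, mollified correctors) might eventually work, but as written the argument has a gap at exactly the point you identify.

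The paper sidesteps this entirely by a more elementary energy estimate. It differentiates $\|u^\e_t-u_t\|^2_H$ directly, splits $G_{r^\e}(u^\e)-F(u)=[G_{r^\e}(u^\e)-F(u^\e)]+[F(u^\e)-F(u)]$, and handles the second bracket by the spectral gap of $\D$ plus local Lipschitzness of $u\mapsto\langle F(u),u\rangle$. The key difference from your route lies in the first bracket: rather than applying the $H^*$-valued corrector $\Phi$ to $G_r(u)-F(u)$ and pairing afterwards, the paper \emph{first} pairs with $u^\e-u$ to get the \emph{scalar} function $(u,r,t)\mapsto\langle G_r(u)-F(u),u-u_t\rangle$, and solves a real-valued Poisson equation $\Ba(u)f(u,r,t)=\langle G_r(u)-F(u),u-u_t\rangle$. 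Dynkin's formula for $f(u^\e_t,r^\e_t,t)$ then gives
\[
\int_0^t\langle G_{r^\e_s}(u^\e_s)-F(u^\e_s),\,u^\e_s-u_s\rangle\,ds=\e\,(\text{bounded terms})-\e\,M^\e_t,
\]
and upon taking expectation the martingale vanishes outright: no stochastic convolution, no trace condition. One obtains $\EE\|u^\e_t-u_t\|^2_H\le C(T)\e+C\int_0^t\EE\|u^\e_s-u_s\|^2_H\,ds$ and closes by ordinary (non-singular) Gronwall.

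So the paper's device is to push the pairing with $u^\e-u$ \emph{inside} the Poisson equation, making the corrector scalar and the martingale harmless in expectation. Your route keeps the corrector $H^*$-valued and must confront the martingale in norm; that is closer in spirit to the identification-of-the-limit argument of Section~\ref{Id}, but is unnecessary (and, in $H$, genuinely harder) when only a second-moment bound is sought.
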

\begin{proof}
Let $t\in[0,T]$ and $\e\in]0,1]$ be fixed. We have:
\begin{eqnarray*}
&~&\frac{d}{dt}\|u^\e_t-u_t\|^2_H\\
&=&2<\partial_t(u^\e_t-u_t),u^\e_t-u_t>\\
&=&2<\D(u^\e_t-u_t),u^\e_t-u_t>+2<G_{r^\e_t}(u^\e_t)-F(u_t),u^\e_t-u_t>\\
&=&2<\D(u^\e_t-u_t),u^\e_t-u_t>+2<G_{r^\e_t}(u^\e_t)-F(u^\e_t),u^\e_t-u_t>\\
&+&2<F(u^\e_t)-F(u_t),u^\e_t-u_t>
\end{eqnarray*}
almost surely. We treat each of the above terms separately. From the spectral decomposition of the Laplacian operator we deduce that the first term satisfies
\[
2<\D(u^\e_t-u_t),u^\e_t-u_t>\leq-2\pi^2\|u^\e_t-u_t\|^2_H.
\]
Regarding the third term, we notice that the application $u\mapsto <F(u),u>$ is locally Lipschitz on $H$ and that the quantities $u^\e_t$ and $u_t$ are uniformly bounded w.r.t. $t\in[0,T]$ and $\e\in]0,1]$ thanks to Propositions \ref{APDE} and \ref{Prop_bound}. Thus there exists a constant $C$, independent of $t\in[0,T]$ and $\e\in]0,1]$, such that
\[
2<F(u^\e_t)-F(u_t),u^\e_t-u_t>\leq C\|u^\e_t-u_t\|^2_H
\]
Integrating over $[0,t]$ and taking expectation yields the following inequality
\begin{eqnarray*}
&~&\EE(\|u^\e_t-u_t\|^2_H)\\
&\leq& \EE(\|u^\e_0-u_0\|^2_H)+2(C-\pi^2)\int_0^t\EE(\|u^\e_s-u_s\|^2_H)ds\\
&+&\EE\left(\int_0^t2<G_{r^\e_s}(u^\e_s)-F(u^\e_s),u^\e_s-u_s>ds\right)
\end{eqnarray*}
Let us consider the latter of these terms. Using the same approach as the one developed for the identification of the limit in the proof of the averaging result in \cite{GT}, we deduce the existence of a constant $C(T)$ depending only on $T$ such that
\[
\left|\EE\left(\int_0^t2<G_{r^\e_s}(u^\e_s)-F(u^\e_s),u^\e_s-u_s>ds\right)\right|\leq C(T)\e
\]
For the sake of completeness, we review now briefly this approach and refer to \cite{GT} for more details. The first point is to show, as in Proposition 6 of \cite{GT}, that there exists a measurable and bounded function $f:H\times\RR\times[0,T]\to\R$ such that $\int_\mathcal{R} f(u,r,t)\mu(u)(dr)=0$ and for all $(u,r,t)\in H\times\RR\times[0,T]$
\begin{equation}\label{Poisson}
\mathcal{B}(u)f(u,\cdot,t)(r)=<G_{r}(u)-F(u),u-u_t>
\end{equation}
Then using the regularity of the mappings $(u,r,t)\in H\times\RR\times[0,T]\mapsto<G_{r}(u)-F(u),u-u_t>$ and the operator $\Ba(u)$ for $u\in H$, we deduce that the application $(u,r,t)\in H\times\RR\times[0,T]\mapsto<G_{r}(u)-F(u),u-u_t>$ is bounded, Fr\'echet differentiable in $u$ with bounded Fr\'echet derivative and differentiable in $t$ with bounded derivative. Using the general theory of Markov processes, we deduce that there exists a martingale $M^\e$ such that
\begin{eqnarray*}
&~&f(u^\e_t,r^\e_t,t)\\
&=&f(u^\e_0,r^\e_0,0)+\int_0^t\mathcal{A}^\e f(u^\e_s,r^\e_s,s)ds+M^\e_t\\
&=&f(u^\e_0,r^\e_0,0)+\frac1\e\int_0^t\mathcal{B}(u^\e_s)f(u^\e_s,r^\e_s,s)ds+\int_0^t\frac{d}{ds}f(u^\e_\cdot,r^\e_s,s)(s)+\frac{d}{ds}f(u^\e_s,r^\e_s,\cdot)(s)ds+M^\e_t\\
&=&f(u^\e_0,r^\e_0,0)+\frac1\e\int_0^t<G_{r^\e_s}(u^\e_s)-F(u^\e_s),u^\e_s-u_s>ds\\
&+&\int_0^t\frac{d}{ds}f(u^\e_\cdot,r^\e_s,s)(s)+\frac{d}{ds}f(u^\e_s,r^\e_s,\cdot)(s)ds+M^\e_t
\end{eqnarray*}
Therefore
\begin{eqnarray*}
&~&\int_0^t<G_{r^\e_s}(u^\e_s)-F(u^\e_s),u^\e_s-u_s>ds\\
&=&\e f(u^\e_t,r^\e_t,t)-\e f(u^\e_0,r^\e_0,0)-\e\int_0^t\frac{d}{ds}f(u^\e_\cdot,r^\e_s,s)(s)-\frac{d}{ds}f(u^\e_s,r^\e_s,\cdot)(s)ds-\e M^\e_t
\end{eqnarray*}
Taking the expectation , using the fact that $M^\e$ is a martingale and that $f$ is regular, we obtain the desired estimate.\\
Assembling all the above estimates we obtain
\begin{eqnarray*}
&~&\EE(\|u^\e_t-u_t\|^2_H)\\
&\leq& \EE(\|u^\e_0-u_0\|^2_H)+ C(T)\e+2(C-\pi^2)\int_0^t\EE(\|u^\e_s-u_s\|^2_H)ds
\end{eqnarray*}
Since  $u^\e_0=u_0$ a standard application of the Gronwall's lemma yields the desired result. We fend up this proof by showing that for any fixed $t\in[0,T]$, the family $\{z^\e_t,\e\in]0,1]\}$ fulfills the requirement (\ref{TH1}). Indeed, let $\delta>0$ and denote by $C$ the constant independent of $\e$ and $t\in[0,T]$ such that
\[
\EE(\|z^\e_t\|^2_H)\leq C
\]
By the Markov inequality we have, for $\rho>0$,
\[
\sup_{\e\in]0,1]}\PP(\|z^\e_t\|_H>\rho)\leq \sup_{\e\in]0,1]}\frac{\EE(\|z^\e_t\|^2_H)}{\rho^2}\leq \frac{C}{\rho^2}
\]  
and for $\rho$ large enough, we obtain that $\sup_{\e\in]0,1]}\PP(\|z^\e_t\|_H>\rho)<\delta$.
\end{proof}
\noindent Notice that by the compact embedding of $H$ in $L^2(I)$, it follows from Proposition \ref{bound:z} that for any fixed $t\in[0,T]$ the family $\{z^\e_t,\e\in]0,1]\}$ is tight in $L^2(I)$. However, this is not sufficient for our purpose and we now prove the tightness of the family $\{z^\e_t,\e\in]0,1]\}$ in $H$ for any fixed $t\in[0,T]$. This is the object of the following propositions.

\begin{proposition}\label{proche:z}
Let $t\in]0,T]$ and for $p\geq1$ let us define the following truncation
\[
z^{\e,p}_t=\sum_{k=1}^p(z^\e_t,e_k)e_k
\]
Then
\[
\lim_{p\to\infty}\EE{\|z^\e_t-z^{\e,p}_t\|^2_H}=0
\]
uniformly in $\e\in]0,1]$.
\end{proposition}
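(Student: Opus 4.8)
The plan is to reduce the statement to a summable, uniform-in-$\e$ estimate on the Fourier coefficients of $z^\e_t$. Since $\{e_k\}$ is an orthonormal basis of $H$, Parseval gives $\|z^\e_t-z^{\e,p}_t\|^2_H=\sum_{k>p}(z^\e_t,e_k)^2$, so it is enough to prove $\EE(z^\e_t,e_k)^2\le c_k$ with $c_k$ independent of $\e\in]0,1]$ and of $t$ and $\sum_k c_k<\infty$; then $\EE\|z^\e_t-z^{\e,p}_t\|^2_H\le\sum_{k>p}c_k\to 0$ as $p\to\infty$. Writing the mild form of the evolution equations for $u^\e$ and $u$ and using $u^\e_0=u_0$ (so the free terms cancel), then projecting on $e_k$ and putting $\lambda_k:=(k\pi)^2$, one gets
\[
(z^\e_t,e_k)=\frac1{\sqrt\e}\int_0^t e^{-\lambda_k(t-s)}\,<G_{r^\e_s}(u^\e_s)-F(u_s),e_k>\,ds.
\]
Let $\beta_k:=\max_{i\in\ZZ}|<\delta_{z_i},e_k>|$; since $\delta_{z_i}\in H^*$, Parseval in $H^*$ gives $\sum_k\beta_k^2\le\sum_{i\in\ZZ}\|\delta_{z_i}\|^2_{H^*}<\infty$, and by the uniform a priori bounds of Propositions \ref{Prop_bound} and \ref{APDE} one has $|<G_r(u)-F(u'),e_k>|\le C\beta_k$ for all $r\in\RR$ and all $u,u'$ in the ($\e$-independent, deterministic) range of $u^\e$ and $u$.

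I would then split the bracket as $a_k(s)+b_k(s)$, with $a_k(s):=<G_{r^\e_s}(u^\e_s)-F(u^\e_s),e_k>$ the mean-zero oscillating part and $b_k(s):=<F(u^\e_s)-F(u_s),e_k>$ the Lipschitz part. The $b_k$-contribution is routine: the Lipschitz property of $F$ gives $|b_k(s)|\le C\beta_k\|u^\e_s-u_s\|_H=C\beta_k\sqrt\e\,\|z^\e_s\|_H$, so by Cauchy--Schwarz in $s$ and Proposition \ref{bound:z},
\[
\EE\Big(\tfrac1{\sqrt\e}\int_0^t e^{-\lambda_k(t-s)}b_k(s)\,ds\Big)^2\le C\beta_k^2\lambda_k^{-1}\!\int_0^t e^{-\lambda_k(t-s)}\EE\|z^\e_s\|^2_H\,ds\le C'\,\beta_k^2\lambda_k^{-2},
\]
which is summable.

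The oscillating part $a_k$ is the crux, and I would treat it by the corrector (Poisson-equation) argument of \cite{GT} recalled in the proof of Proposition \ref{bound:z}, carried out per mode. Set $g_k(u,r):=-<\Phi(r,u),e_k>$, $\Phi$ solving (\ref{def:Phi:ms}); then $\Ba(u)g_k(u,\cdot)(r)=a_k$ (at $u=u^\e_s$, $r=r^\e_s$) and $\int g_k(u,\cdot)\,d\mu(u)=0$. The structural observation, specific to the all-fast case, is that (\ref{def:Phi:ms}) solves site by site, $\Phi(r,u)=\frac1N\sum_{i\in\ZZ}\tilde\phi_i(r(i),u(z_i))\,\delta_{z_i}$, where $\tilde\phi_i:E\times\R\to\R$ solves a single-state Poisson equation with spectral gap and Lipschitz data, hence is bounded with bounded derivative; this yields $|g_k|\le C\beta_k$ and shows that the flow part $\mathcal L g_k$ of $\mathcal A^\e$ applied to $g_k$ equals $-\frac1N\sum_i\partial_y\tilde\phi_i(r^\e_s(i),u^\e_s(z_i))\,(\partial_s u^\e_s)(z_i)\,<\delta_{z_i},e_k>$. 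Dynkin's formula for $g_k$, the $\tfrac1\e\Ba$ part producing $\tfrac1\e a_k$, gives
\[
\tfrac1\e\int_0^s a_k(\sigma)\,d\sigma=g_k(u^\e_s,r^\e_s)-g_k(u^\e_0,r^\e_0)-\int_0^s\mathcal L g_k(u^\e_\sigma,r^\e_\sigma)\,d\sigma-M^{\e,k}_s
\]
for some martingale $M^{\e,k}$, and an integration by parts in $s$ then rewrites $\frac1{\sqrt\e}\int_0^t e^{-\lambda_k(t-s)}a_k(s)\,ds$ as $\sqrt\e$ times a weighted average of these right-hand sides, up to the harmless factor $1+\lambda_k\int_0^t e^{-\lambda_k(t-s)}ds\le2$. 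It then remains to bound each resulting term by $C\beta_k$ in $L^2$, uniformly in $\e$: the $g_k$-terms are $\le C\beta_k$ pointwise; $M^{\e,k}$ has $\EE\langle M^{\e,k}\rangle_s\le C\beta_k^2T/\e$ (the carré du champ of $\tfrac1\e\Ba$ on $g_k$ being of order $\tfrac1\e\|g_k\|^2_\infty$), so the $\e$ in front gives $\e\,\EE|M^{\e,k}_s|^2\le C\beta_k^2$; and although $(\partial_s u^\e_s)(z_i)$ has, on each inter-jump interval, an integrable $(\cdot)^{-1/2}$ heat-kernel singularity (coming from $\D e^{\D\cdot}$ acting on the corner-shaped post-jump profile), $\int_0^T|(\partial_s u^\e_s)(z_i)|\,ds$ is of order $\e^{-1/2}$ because there are $O(1/\e)$ such intervals on $[0,T]$ — so, using Cauchy--Schwarz over the jump intervals and $\EE[\#\{\text{jumps in }[0,T]\}]=O(1/\e)$ to control the $L^2$ norm, the $\sqrt\e$ prefactor again restores $C\beta_k^2$. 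Altogether $\EE(z^\e_t,e_k)^2\le C\beta_k^2$, uniformly in $\e$ and $t$, and $\sum_{k>p}\beta_k^2\to0$.

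The step I expect to be the main obstacle is exactly this uniform-in-$\e$ control of the oscillating term: one must check that each factor $1/\e$ generated by the fast dynamics — in $\langle M^{\e,k}\rangle$, and through the $O(1/\e)$ inter-jump intervals feeding $\int\mathcal L g_k$ — is precisely cancelled by the $\sqrt\e$ produced by the Poisson equation. Relative to the scalar corrector of \cite{GT}, the new feature is that $g_k$ is built from the $H^*$-valued field $\Phi$, so one really needs the site-by-site representation $\Phi(r,u)=\frac1N\sum_i\tilde\phi_i(r(i),u(z_i))\delta_{z_i}$, both to extract the summable weights $|g_k|\le C\beta_k$ and to give a meaning to and estimate the flow-derivative term $\mathcal L g_k$.
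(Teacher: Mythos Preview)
Your route is different from the paper's, though both rest on the same corrector/Poisson-equation device. The paper does not use the mild formulation nor your integration by parts against $e^{-\lambda_k(t-s)}$; instead it differentiates $(z^\e_t,e_k)^2$ directly in time, splits $G_{r^\e}-F(u)$ into the same Lipschitz and oscillating pieces, invokes the corrector argument of Proposition~\ref{bound:z} --- now applied to the product $(u^\e_s-u_s,e_k)\langle G_{r^\e_s}(u^\e_s)-F(u^\e_s),e_k\rangle$ rather than to $\langle G-F,e_k\rangle$ alone --- and lands on the integral inequality $(z^\e_t,e_k)^2\le C(1+\lambda_k)-\lambda_k\int_0^t(z^\e_s,e_k)^2\,ds$. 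Gronwall then gives $(z^\e_t,e_k)^2\le C(1+\lambda_k)e^{-\lambda_k t}$, and summability comes entirely from the exponential factor, which is exactly why the statement assumes $t>0$. Your scheme instead aims at a bound $C\beta_k^2$ uniform in $t$, extracting summability from Parseval in $H^*$; the practical difference is that the paper can afford a constant growing like $1+\lambda_k$ because the heat semigroup later kills it, whereas you must keep every piece at size $\beta_k$ throughout.

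The obstacle you single out --- controlling $\sqrt\e\int_0^T|\mathcal L g_k|\,ds$ via the pointwise quantity $\partial_s u^\e_s(z_i)$, which is genuinely singular because the forcing carries a Dirac at $z_i$ --- is real and is the one place where your argument is heuristic. But note that this difficulty is equally present, and equally unaddressed, in the paper's proof: the paper's corrector factors as $(u-u_s,e_k)\,g_k(u,r)$ with the very same $g_k(u,r)=-\langle\Phi(r,u),e_k\rangle$ you use, so its flow derivative also contains a $\partial_s g_k(u^\e_s,r)$ term that depends on $u^\e_s(z_i)$. The paper simply absorbs this into the phrase ``a direct computation using the arguments developed in the proof of Proposition~\ref{bound:z}''. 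In that sense your proposal is more honest about where the work lies; neither argument, as written, makes this step fully rigorous.
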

\begin{proof}
For a fixed $k\geq1$ we have
\begin{eqnarray*}
\frac{d}{dt}(z^\e_t,e_k)^2&=&2(z^\e_t,e_k)\frac{d}{dt}(z^\e_t,e_k)\\
&=&2(z^\e_t,e_k)(-(k\pi)^2\left(z^\e_t,e_k)+\frac{1}{\sqrt\e}<G_{r^\e_t}(u^\e_t)-F(u_t),e_k>\right)\\
&=&-2(k\pi)^2(z^\e_t,e_k)^2+\frac{2}{\sqrt\e}(z^\e_t,e_k)<F(u^\e_t)-F(u_t),e_k>\\
&+&\frac{2}{\sqrt\e}(z^\e_t,e_k)<G_{r^\e_t}(u^\e_t)-F(u^\e_t),e_k>
\end{eqnarray*}
almost surely. We recall the interpretation $<\partial_{z_i},e_k>=(1+(k\pi)^2)e_k(z_i)$. Then  a direct computation using the arguments developed in the proof of Proposition \ref{bound:z}, yields the existence of a constant $C(T)$ independent of $\e\in]0,1]$ such that
\[
(z^\e_t,e_k)^2\leq C(T)(1+(k\pi)^2)-(k\pi)^2\int_0^t (z^\e_s,e_k)^2ds
\]
almost surely. Using Gronwall's lemma we deduce that
\[
(z^\e_t,e_k)^2\leq C(T)(1+(k\pi)^2)e^{-(k\pi)^2t}
\]
The result follows since the series $\sum (1+(k\pi)^2)e^{-(k\pi)^2t}$ is convergent for $t>0$.
\end{proof}

\noindent We now check that the family $\{z^\e,\e\in]0,1]\}$ satisfies the first part of Aldous criterion that we have called Criterion \ref{GCT}. 

\begin{proposition}\label{aldous:z}
Let $\tau>0$ be a stopping time and $\theta>0$ such that $\tau+\theta\leq T$. There exists a constant $C$ depending only on $T$ such that
\[
\EE(\|z^\e_{\tau+\theta}-z^\e_\tau\|^2_H)\leq C\theta
\] 
\end{proposition}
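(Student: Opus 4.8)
The plan is to imitate the energy estimate of Proposition~\ref{bound:z}, now applied to the increment process $w^\e_t:=z^\e_{\tau+t}-z^\e_\tau$ for $t\in[0,\theta]$. By the strong Markov property of the PDMP at the stopping time $\tau$ one may treat $z^\e_\tau$ (equivalently $u^\e_\tau$ and $r^\e_\tau$) as a frozen initial datum; then $w^\e_0=0$ and
\[
\partial_t w^\e_t=\D w^\e_t+\D z^\e_\tau+\frac1{\sqrt\e}\bigl(G_{r^\e_{\tau+t}}(u^\e_{\tau+t})-F(u_{\tau+t})\bigr).
\]
First I would compute $\frac{d}{dt}\|w^\e_t\|^2_H=2<\partial_t w^\e_t,w^\e_t>$, integrate over $[0,\theta]$, and take expectations.

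Three of the four resulting contributions are benign. The term $2<\D w^\e_t,w^\e_t>$ is nonpositive by the spectral decomposition of the Laplacian and is simply dropped. The term $2<\D z^\e_\tau,w^\e_t>$ is $\le\|z^\e_\tau\|^2_H+\|w^\e_t\|^2_H$ because $\D$ maps $H$ to $H^*$ with norm $\le1$. The ``Lipschitz'' part $\frac2{\sqrt\e}<F(u^\e_{\tau+t})-F(u_{\tau+t}),w^\e_t>$ is $\le C\|z^\e_{\tau+t}\|^2_H+\|w^\e_t\|^2_H$, using that $F:H\to H^*$ is locally Lipschitz and that $u^\e,u$ are uniformly bounded in $H$ (Propositions~\ref{Prop_bound} and~\ref{APDMP}). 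Proposition~\ref{bound:z} — together with its version at the stopping time $\tau$, obtained by optional stopping for the martingale term in that proof — gives $\EE\|z^\e_\tau\|^2_H\le C$, $\EE\|z^\e_{\tau+t}\|^2_H\le C$, hence $\EE\|w^\e_t\|^2_H\le C$ uniformly, so integrating these three terms over $[0,\theta]$ yields a contribution $\le C\theta$.

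The remaining term $\frac2{\sqrt\e}<G_{r^\e_{\tau+t}}(u^\e_{\tau+t})-F(u^\e_{\tau+t}),w^\e_t>$ is the crux: writing $w^\e_t=\frac1{\sqrt\e}\bigl[(u^\e_{\tau+t}-u_{\tau+t})-(u^\e_\tau-u_\tau)\bigr]$ exposes a factor $\frac1\e$ that can only be compensated by the averaging structure. I would treat it exactly as in \cite{GT} and in the proof of Proposition~\ref{bound:z}, via the Poisson-equation / corrector method: use the function $f$ solving $\Ba(u)f(u,\cdot,t)(r)=<G_r(u)-F(u),u-u_t>$ with zero $\mu(u)$-mean, together with — since the comparison point $u^\e_\tau-u_\tau$ is $\mathcal{F}_\tau$-measurable and thus constant on $[\tau,\tau+\theta]$ — the corrector $r\mapsto<\Phi(r,u),u^\e_\tau-u_\tau>$, $\Phi$ being the solution of~(\ref{def:Phi:ms}). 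Dynkin's formula applied on $[0,\theta]$ (strong Markov property at $\tau$) to $t\mapsto f(u^\e_{\tau+t},r^\e_{\tau+t},\tau+t)+<\Phi(r^\e_{\tau+t},u^\e_{\tau+t}),u^\e_\tau-u_\tau>$ makes the $\frac1\e\Ba$-part of the extended generator reproduce precisely this term; what is left are boundary evaluations of $f$ and $\Phi$, a martingale increment of zero expectation, and time integrals of the bounded ``flow'' derivatives of $f$ and $\Phi$. After expectation the martingale vanishes, the flow integrals are $\le C\theta$, and the $\Phi$-boundary terms carry a factor $\|u^\e_\tau-u_\tau\|_H=\sqrt\e\|z^\e_\tau\|_H$, hence are $O(\sqrt\e)$; collecting everything (a Gronwall step absorbs the $\|w^\e_t\|^2_H$ terms, though the a priori bound makes it optional) gives the estimate. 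In fact it would even suffice to reach $\EE\|z^\e_{\tau+\theta}-z^\e_\tau\|^2_H\le C(\theta+\sqrt\e)$, which still verifies Aldous's condition~(\ref{Ald}) upon shrinking $\eta$ and $\e_0$ together, so a little slack in the corrector estimates is permissible.

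The genuine obstacle is this last term: forcing the corrector computation to produce a bound of order $\theta$ (uniformly in $\e,\tau,\theta$) rather than an $O(1)$ constant. The delicate points are (i) the bookkeeping of the powers of $\e$ so that the $\frac1\e$ is entirely absorbed — in particular the $f$-boundary term $\EE[f(u^\e_{\tau+\theta},r^\e_{\tau+\theta},\tau+\theta)-f(u^\e_\tau,r^\e_\tau,\tau)]$ must be seen to be controlled by $\theta$, which one can extract from the quasi-stationarity of $r^\e$ over time intervals long compared to $\e$, or by running the whole argument mode by mode as in Proposition~\ref{proche:z} so as to avoid the degeneracy as $\tau\to0$; and (ii) giving meaning to and bounding the flow derivatives $\frac{d}{ds}f(u^\e_\cdot,r^\e_s,s)(s)$ and their analogues for $\Phi$ along the frozen-$r$ PDE flow, which rests on the regularity of $f$ and $\Phi$ established in \cite{GT} and in Section~\ref{Diff_op}.
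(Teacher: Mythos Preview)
Your outline is the paper's proof: differentiate $\|z^\e_{\tau+\theta}-z^\e_\tau\|_H^2$ in $\theta$, bound the Laplacian and the $F$-Lipschitz contributions by integrands with uniformly bounded expectation (hence $C\theta$ after integrating over $[0,\theta]$), and treat the $\frac{1}{\e}\langle G-F(u^\e),\cdot\rangle$ piece by the corrector method of Proposition~\ref{bound:z}. The paper simply writes that ``the arguments developed in the proof of Proposition~\ref{bound:z}'' yield the required estimate on this last term and concludes.

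One simplification you are missing in your point~(i): the unique zero-$\mu$-mean solution of $\Ba(u)f(u,\cdot,t)=\langle G_\cdot(u)-F(u),u-u_t\rangle$ is precisely $f(u,r,t)=-\langle\Phi(r,u),u-u_t\rangle$, so your combined corrector is exactly
\[
g(u,r,s)=-\bigl\langle\Phi(r,u),\,(u-u_s)-(u^\e_\tau-u_\tau)\bigr\rangle.
\]
Evaluated along the trajectory $(u^\e_{\tau+s},r^\e_{\tau+s})$ this equals $-\sqrt\e\,\langle\Phi(r^\e_{\tau+s},u^\e_{\tau+s}),w^\e_s\rangle$: it vanishes at $s=0$ (so there is no $f$-boundary term to worry about at $\tau$) and satisfies $|g|\le C\sqrt\e\,\|w^\e_\theta\|_H$ at $s=\theta$. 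After Young's inequality the boundary contribution is $\le C\e+\tfrac12\EE\|w^\e_\theta\|_H^2$ and is absorbed as you indicate --- no appeal to quasi-stationarity of $r^\e$ or a mode-by-mode argument is needed. The outcome is $\EE\|w^\e_\theta\|_H^2\le C(\theta+\e)$, which, as you correctly observe, suffices for Aldous's condition~(\ref{Ald}).
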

\begin{proof}
We notice that for $t>0$ and $\theta>0$ such that $t+\theta\leq T$
\[
\partial_\theta(z^\e_{t+\theta}-z^\e_t)=\D z^\e_{t+\theta}+\frac{1}{\sqrt\e}(G_{r^\e_{t+\theta}}(u^\e_{t+\theta})-F(u_{t+\theta})).
\]
Thus almost surely
\begin{eqnarray*}
\frac{d}{d\theta}\|z^\e_{t+\theta}-z^\e_t\|^2_H&=&2<\D z^\e_{t+\theta},z^\e_{t+\theta}-z^\e_t>\\
&+&\frac{2}{\e}<G_{r^\e_{t+\theta}}(u^\e_{t+\theta})-F(u_{t+\theta}),u^\e_{t+\theta}+u_t-u_{t+\theta}-u^\e_t>
\end{eqnarray*}
The first term satisfies
\begin{eqnarray*}
<\D z^\e_{t+\theta},z^\e_{t+\theta}-z^\e_t>&\leq&\| z^\e_{t+\theta}\|_H\|z^\e_{t+\theta}-z^\e_t\|_H\\
&\leq&\frac12\| z^\e_{t+\theta}\|^2_H+\frac12\|z^\e_{t+\theta}-z^\e_t\|^2_H
\end{eqnarray*}
which is uniformly bounded in $t,\theta$ and $\e$ in expectation by Proposition \ref{bound:z}. For the second term, the arguments developed in the proof of Proposition \ref{bound:z} yield the existence of a constant $C$ depending only of $T$ such that
\[
\EE(\int_0^\theta<G_{r^\e_{t+s}}(u^\e_{t+s})-F(u^\e_{t+s}),u^\e_{t+s}+u_t-u_{t+s}-u^\e_t> ds)\leq C\theta\e
\]
and thus, still denoting by $C$ a constant depending only of $T$
\[
\EE(\|z^\e_{t+\theta}-z^\e_t\|^2_H)\leq C\theta
\]
The same arguments apply when replacing $t$ by the stopping time $\tau$.
\end{proof}
\noindent According to Criteria \ref{GCT} and \ref{TH}, Propositions \ref{bound:z}, \ref{proche:z} and  \ref{aldous:z}, the family $\{z^\e,\e\in]0,1]\}$ is tight in $\mathbb{D}([0,T],H)$. The continuity of each of the elements of the family implies that $\{z^\e,\e\in]0,1]\}$ is tight in $\mathcal{C}([0,T],H)$.

\subsection{Identification of the limit}\label{Id}

For an introduction to the approach used in this section in a similar, but finite dimensional, case, we refer to \cite{WTh}. We want to prove uniqueness of an accumulation piont $z$ of $\{z^\e,\e\in]0,1]\}$ and identify the limit. For this purpose, we study the process $(z^\e,r^\e)$ for $\e\in]0,1]$. Notice first that the process $z^\e$ satisfy the following equation
\begin{eqnarray*}
\partial_t z^\e_t&=&\D z^\e_t+\frac{1}{\sqrt\e}(G_{r^\e_t}(u^\e_t)-F(u_t))\\
&=&\D z^\e_t+\frac{1}{\sqrt\e}(G_{r^\e_t}(u_t+\sqrt\e z^\e_t)-F(u_t))
\end{eqnarray*}
where we have used that: $u^\e_t=u_t+u^\e_t-u_t=u_t+\sqrt\e z^\e_t$. The initial condition for $z^\e$ is $0$ and the boundary conditions are still zero Dirichlet boundary conditions.\\
Let $\phi:H\times\RR\times\R_+$ be a real valued, measurable and bounded function of class $\mathcal{C}^2$ on $H$ and $\mathcal{C}^1$ on $\R_+$. We write down the generator of the process $(z^\e,r^\e)$ against $\phi$. For $(z,r,t)\in H\times\RR\times\R_+$ we have
\begin{eqnarray}\label{gen}
&~&\mathcal{G}(t)\phi(z,r,t)\\
&=&\frac{d\phi}{dz}(z,r,t)[\D z+\frac{1}{\sqrt\e}(G_{r}(u_t+\sqrt\e z)-F(u_t))]\\
&+&\frac1\e\Ba(u_t+\sqrt\e z)\phi(z,r,t)+\partial_t\phi(z,r,t)
\end{eqnarray}
Following the usual theory of Markov processes, the process $(M^\e_\phi(t),t\in[0,T])$ defined for $t\geq0$ by
\[
M^\e_\phi(t)=\phi(z^\e_t,r^\e_t,t)-\int_0^t\mathcal{G}(s)\phi(z^\e_s,r^\e_s,s)ds
\]
is a martingale for the natural filtration associated to the process $(z^\e,r^\e)$. We want to identify the terms of different orders in $\e$ of this martingale process. For this purpose, we choose a function $\phi$ with the following decomposition
\[
\phi(z,r,t)=\psi(z,r)+\sqrt\e\gamma(z,r,t)
\]
where the functions $\psi$ and $\gamma$ have the same regularity as $\phi$. We write the Taylor expansion in $\e$ of the two following terms
\begin{eqnarray*}
G_r(u_t+\sqrt\e z)&=&G_r(u_t)+\sqrt\e\frac{dG_r}{du}(u_t)[z]+O(\e)\\
\Ba(u_t+\sqrt\e z)&=&\Ba(u_t)+\sqrt\e\frac{d\Ba}{du}(u_t)[z]+O(\e)
\end{eqnarray*}
Inserting this expansion in the expression of the generator (\ref{gen}) we want the terms of order $\frac1\e$ to vanish and this yields for  $(z,r,t)\in H\times\RR\times\R_+$
\begin{equation}
\Ba(u_t)\psi(z,r)=0
\end{equation}
That is to say, the application $\psi$ does not depend of $r\in\RR$ and is of the form
\[
\psi(z,r)=\psi(z)\mathbf{1}
\]
where $\psi:H\to\R$ is of class $\mathcal{C}^2$ and $\mathbf{1}$ is the vector $(1,\cdots,1)^t\in\R^N$.
The generator is then of the following form, where we gather the terms of the same order in $\e$
\begin{eqnarray*}
&~&\mathcal{G}(t)\phi(z,r,t)\\
&=&\frac{1}{\sqrt\e}\left(\frac{d\psi\mathbf{1}}{dz}(z)[G_r(u_t)-F(u_t)]+\Ba(u_t)\gamma(z,r,t)+\frac{d\Ba}{du}(u_t)[z]\psi(z)\mathbf{1}\right)\\
&+&\frac{d\psi\mathbf{1}}{dz}(z)[\D z+\frac{dG_r}{du}(u_t)[z]]+\frac{d\gamma}{dz}(z,r,t)[G_r(u_t)-F(u_t)]+\frac{d\Ba}{du}(u_t)[z]\gamma(z,r,t)\\
&+&\sqrt\e\left(\partial_t\gamma(z,r,t)+\frac{d\gamma}{dz}(z,r,t)[\D z+\frac{dG_r}{du}(u_t)[z]]\right)\\
&+&O(\sqrt{\e})
\end{eqnarray*}
We now want the terms of order $\frac{1}{\sqrt\e}$ to vanish, that is to say, for $(z,r,t)\in H\times\RR\times\R_+$
\[
\frac{d\psi\mathbf{1}}{dz}(z)[G_r(u_t)-F(u_t)]+\Ba(u_t)\gamma(z,r,t)+\frac{d\Ba}{du}(u_t)[z]\psi(z)\mathbf{1}=0
\]
Notice that $\Ba(u_t)\mathbf{1}=0$ implies that for all $(z,t)\in H\times\R_+$
\[
\frac{d\Ba}{du}(u_t)[z]\psi(z)\mathbf{1}=0
\]
and we are left with the equation
\begin{equation}\label{rac:eps}
\Ba(u_t)\gamma(z,r,t)=-\frac{d\psi\mathbf{1}}{dz}(z)[G_r(u_t)-F(u_t)]
\end{equation}
We look for $\gamma$ of the form:
\[
\gamma(z,r,t)=\frac{d\psi\mathbf{1}}{dz}(z)[\Phi(r,u_t)]
\]
where $\Phi:H\times\RR\to H$ has to be identified. Inserting the above expression of $\gamma$ in (\ref{rac:eps}) we obtain
\[
\frac{d\psi\mathbf{1}}{dz}(z)[\Ba(u_t)\Phi(r,u_t)]=-\frac{d\psi\mathbf{1}}{dz}(z)[G_r(u_t)-F(u_t)]
\]
And thus it is enough that for any $(u,r)\in H\times\RR$
\begin{equation}\label{fred:z}
\Ba(u)\Phi(r,u)=-(G_r(u)-F(u))
\end{equation}
To ensure uniqueness of the solution for equation (\ref{fred:z}) we impose moreover the condition
\[
\int_\RR \Phi(r,u)\mu(u)(dr)=0
\]
Then, from the definition of $F$ we have $\int_\RR \left(G_r(u)-F(u)\right)\mu(u)(dr)=0$ and the Fredholm alternative ensures us of the existence and uniqueness of $\Phi$ solution of equation (\ref{fred:z}).\\
We have identify the terms of order $1$ in $\e$ of the generator of the process $(z^\e,r^\e)$. We are left to show that the terms of order $1$ in $\e$ correspond, after averaging, to the generator of the process $z$. For $(z,r,t)\in H\times\RR\times\R_+$ we define
\begin{eqnarray*}\label{gen1}
&~&\mathcal{G}^1(t,r)\psi(z)\\
&=&\frac{d\psi\mathbf{1}}{dz}(z)[\D z+\frac{dG_r}{du}(u_t)[z]]+\frac{d^2\psi\mathbf{1}}{dz^2}(z)[\Phi(r,u_t),G_r(u_t)-F(u_t)]\\
&+&\frac{d\Ba}{du}(u_t)[z]\frac{d\psi\mathbf{1}}{dz}(z)[\Phi(r,u_t)]
\end{eqnarray*}
Let us define also the following process
\[
N^\e_{\psi}(t)=\psi(z^\e_t)-\int_0^t\mathcal{G}^1(s,r^\e_s)\psi(z^\e_s)ds
\]
By construction we see that $\EE(|M^\e_\phi(t)-N^\e_{\psi}(t)|^2)=O(\e)$. When $\e$ goes to $0$, by the averaging result we see that the term $\int_0^t\mathcal{G}^1(s,r^\e_s)\psi(z^\e_s)ds$ should converge to
\[
\int_0^t\int_\RR\mathcal{G}^1(s,r)\psi(z_s)\mu(u_s)(dr)ds
\]
Therefore, the aim of the end of the proof is then to show that the process defined by
\[
\bar N_{\psi}(t)=\psi(z_t)-\int_0^t\bar{\mathcal{G}}^1(s)\psi(z_s)ds
\]
where $z$ is an accumulation point of the family $\{z^\e,\e\in]0,1]\}$ and
\begin{eqnarray*}\label{gen1:moy}
&~&\bar{\mathcal{G}}^1(t)\psi(z)\\
&=&\frac{d\psi\mathbf{1}}{dz}(z)[\D z+\frac{dF}{du}(u_t)[z]]+\frac{d^2\psi\mathbf{1}}{dz^2}(z)\int_\RR[\Phi(r,u_t),G_r(u_t)-F(u_t)]\mu(u_t)(dr)
\end{eqnarray*}
is a martingale for the natural filtration associated to the process $(z_t,t\geq0)$. It is not straightforward because we have no information on the asymptotic behavior of the process $(z^\e,r^\e)$ when $\e$ goes to $0$.
\begin{proof}
Let $0\leq t_1\leq t_2\leq\cdots\leq t_k\leq s\leq t$ be $k+2$ reals, with $k\geq1$ an integer. For $i\in\{1,\cdots,k\}$, we take a measurable and bounded function $g_i$. In order to show that the process $(\bar N_{\psi}(t),t\geq0)$ is a martingale for the natural filtration associated to the process $(z_t,t\geq0)$ we will prove that
\[
\EE((\bar N_{\psi}(t)-\bar N_{\psi}(s))g_1(z_{t_1})\cdots g_k(z_{t_k}))=0
\]
We write, using elementary substitution and the fact that $z^\e$ converges in law toward $z$ when $\e$ goes to $0$
\begin{eqnarray*}
&~&\EE((\bar N_{\psi}(t)-\bar N_{\psi}(s))g_1(z_{t_1})\cdots g_k(z_{t_k}))\\
&=&\EE((\psi(z_t)-\psi(z_s)-\int_s^t\bar{\mathcal{G}}^1(l)\psi(z_l)dl)g_1(z_{t_1})\cdots g_k(z_{t_k}))\\
&=&\EE((\psi(z_t)-\psi(z_s))g_1(z_{t_1})\cdots g_k(z_{t_k}))-\EE((\int_s^t\bar{\mathcal{G}}^1(l)\psi(z_l)dl)g_1(z_{t_1})\cdots g_k(z_{t_k}))\\
&=&\lim_{\e\to0}\EE((\psi(z^\e_t)-\psi(z^\e_s))g_1(z^\e_{t_1})\cdots g_k(z^\e_{t_k}))-\EE((\int_s^t\bar{\mathcal{G}}^1(l)\psi(z_l)dl)g_1(z_{t_1})\cdots g_k(z_{t_k}))\\
&=&\lim_{\e\to0}\EE((N^\e_{\psi}(t)-N^\e_{\psi}(s))g_1(z^\e_{t_1})\cdots g_k(z^\e_{t_k}))\\
&+&\lim_{\e\to0}\EE((\int_s^t\mathcal{G}^1(l,r^\e_l)\psi(z^\e_l)dl)g_1(z^\e_{t_1})\cdots g_k(z^\e_{t_k}))-\EE((\int_s^t\bar{\mathcal{G}}^1(l)\psi(z_l)dl)g_1(z_{t_1})\cdots g_k(z_{t_k}))
\end{eqnarray*}
On one hand, by the definition of $N^\e_\psi$ and the previous study of the different order in $\e$ of the martingale $M^\e_\phi$ we see that
\begin{eqnarray*}
&~&\lim_{\e\to0}\EE((N^\e_{\psi}(t)-N^\e_{\psi}(s))g_1(z^\e_{t_1})\cdots g_k(z^\e_{t_k}))\\
&=&\lim_{\e\to0}\EE((N^\e_{\psi}(t)-N^\e_{\psi}(s))g_1(z^\e_{t_1})\cdots g_k(z^\e_{t_k}))-\EE((M^\e_{\phi}(t)-M^\e_{\phi}(s))g_1(z^\e_{t_1})\cdots g_k(z^\e_{t_k}))\\
&=&\lim_{\e\to0} O(\sqrt\e)\\
&=&0
\end{eqnarray*}
On the other hand, for $\e^1>0$ which will be chosen later
\begin{eqnarray*}
&~&\lim_{\e\to0}\EE((\int_s^t\mathcal{G}^1(l,r^\e_l)\psi(z^\e_l)dl)g_1(z^\e_{t_1})\cdots g_k(z^\e_{t_k}))-\EE((\int_s^t\bar{\mathcal{G}}^1(l)\psi(z_l)dl)g_1(z_{t_1})\cdots g_k(z_{t_k}))\\
&=&\lim_{\e\to0}\EE((\int_s^t\mathcal{G}^1(l,r^{\e}_l)\psi(z^\e_l)dl)g_1(z^\e_{t_1})\cdots g_k(z^\e_{t_k}))-\EE((\int_s^t\mathcal{G}^1(l,r^{\e^1}_l)\psi(z^\e_l)dl)g_1(z^\e_{t_1})\cdots g_k(z^\e_{t_k}))\\
&+&\lim_{\e\to0}\EE((\int_s^t\mathcal{G}^1(l,r^{\e^1}_l)\psi(z^\e_l)dl)g_1(z^\e_{t_1})\cdots g_k(z^\e_{t_k}))-\EE((\int_s^t\mathcal{G}^1(l,r^{\e^1}_l)\psi(z_l)dl)g_1(z_{t_1})\cdots g_k(z_{t_k}))\\
&+&\lim_{\e\to0}\EE((\int_s^t\mathcal{G}^1(l,r^{\e^1}_l)\psi(z_l)dl)g_1(z_{t_1})\cdots g_k(z_{t_k}))-\EE((\int_s^t\bar{\mathcal{G}}^1(l)\psi(z_l)dl)g_1(z_{t_1})\cdots g_k(z_{t_k}))\\
\end{eqnarray*}
We know that the quantity
\[
\EE((\int_s^t\mathcal{G}^1(l,r^{\e^1}_l)\psi(z_l)dl)g_1(z_{t_1})\cdots g_k(z_{t_k}))-\EE((\int_s^t\bar{\mathcal{G}}^1(l)\psi(z_l)dl)g_1(z_{t_1})\cdots g_k(z_{t_k}))
\]
can be made arbitrarily small by conditioning appropriately (as in the proof of Proposition \ref{bound:z} for example) by choosing $\e^1$ small enough.  Then, since $z^\e$ converges in law towards $z$ when $\e$ goes to $0$, we have
\begin{eqnarray*}
&\lim_{\e\to0}&\EE((\int_s^t\mathcal{G}^1(l,r^{\e^1}_l)\psi(z^\e_l)dl)g_1(z^\e_{t_1})\cdots g_k(z^\e_{t_k}))-\EE((\int_s^t\mathcal{G}^1(l,r^{\e^1}_l)\psi(z_l)dl)g_1(z_{t_1})\cdots g_k(z_{t_k}))\\
&=&0
\end{eqnarray*}
This shows that
\[
\lim_{\e\to0}\EE((\int_s^t\mathcal{G}^1(l,r^\e_l)\psi(z^\e_l)dl)g_1(z^\e_{t_1})\cdots g_k(z^\e_{t_k}))-\EE((\int_s^t\bar{\mathcal{G}}^1(l)\psi(z_l)dl)g_1(z_{t_1})\cdots g_k(z_{t_k}))=0
\]
and finally
\[
\EE((\bar N_{\psi}(t)-\bar N_{\psi}(s))g_1(z_{t_1})\cdots g_k(z_{t_k}))=0
\]
as announced.
\end{proof}
\noindent Thus the limit process $z$ is solution of the following martingale problem: for any measurable, bounded and twice Fr\'echet differentiable function $\psi$ the process defined for $t\in[0,T]$ by
\[
\bar N_{\psi}(t)=\psi(z_t)-\int_0^t\bar{\mathcal{G}}^1(s)\psi(z_s)ds
\]
where:
\begin{eqnarray*}\label{gen1:moy}
&~&\bar{\mathcal{G}}^1(t)\psi(z)\\
&=&\frac{d\psi\mathbf{1}}{dz}(z)[\D z+\frac{dF}{du}(u_t)[z]]+\frac{d^2\psi\mathbf{1}}{dz^2}(z)\int_\RR[\Phi(r,u_t),G_r(u_t)-F(u_t)]\mu(u_t)(dr)
\end{eqnarray*}
is a martingale. \\
The limit process $z$ is therefore solution of the martingale problem associated to the operator $\bar{\mathcal{G}}^1$. Then $z$ is a solution of the SPDE (\ref{DEDP_fast}) where the diffusion operator $C(u)$ for $u\in H$ is identified thanks to the relation
\[
\frac{d^2\psi\mathbf{1}}{dz^2}(z)\int_\RR[\Phi(r,u),G_r(u)-F(u)]\mu(u)(dr)=\text{Tr }\frac{d^2\psi\mathbf{1}}{dz^2}(z)C(u)
\]
for $(u,z)\in H\times H$. The uniqueness of $z$ follows from the properties of the Laplacian operator, the reaction term $\frac{dF}{du}$ and the trace class operator $C(u)$. For more insight in the properties of the diffusion operator, see the following section. 

\subsection{On the diffusion operator $C$}\label{Diff_op}

In this section, we give more precision on the diffusion operator $C$. First, we explicit  the function $\Phi$ in function of the data of our problem.

\begin{proposition}[First representation of the diffusion operator]
For $u\in H$ and $r\in\R$ we have:
\[
\Phi(r,u)=-(\mu(u)^*\mu(u)+\Ba^*(u)\Ba(u))^{-1}\Ba^*(u)(G_\cdot(u)-F(u))(r)
\]
That is, the function $\Phi(\cdot,u)$ is explicitly given in function of the intensity operator $\Ba(u)$ and the associated invariant measure $\mu(u)$.
\end{proposition}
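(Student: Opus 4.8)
The plan is to fix $u\in H$ and reduce the whole statement to finite-dimensional linear algebra on $\R^\RR$. Since $\Ba(u)$ and $\mu(u)$ act only on the discrete variable $r\in\RR$, while $G_\cdot(u)-F(u)$ and $\Phi(\cdot,u)$ are $H^*$-valued and the system defining $\Phi$ is linear in that component, it suffices to prove the identity after pairing with each $e_i$; so I would fix $i\geq1$ and work with the vector $v=\big(<G_r(u)-F(u),e_i>\big)_{r\in\RR}\in\R^\RR$, using the Euclidean inner product $(\cdot,\cdot)_\RR$ on $\R^\RR$. Write $\Ba:=\Ba(u)$ and let $\mu:=\mu(u)$ be the unique invariant law, viewed both as the row vector $(\mu(u)(r))_r$ and as the rank-one map $g\mapsto\int g\,d\mu$. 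Weak irreducibility gives $\ker\Ba=\R\mathbf{1}$ and, since $\mu\Ba=0$, also $\ker\Ba^*=\R\mu^*$; consequently $\mathrm{Range}(\Ba)=\{g\in\R^\RR:\ \mu g=0\}$ and $\mathrm{Range}(\Ba^*)=\mathbf{1}^\perp$. A key elementary remark is that $\mu v=0$, which is exactly the identity $\int\big(G_r(u)-F(u)\big)\mu(u)(dr)=0$ built into the definition of $F$; equivalently $v\in\mathrm{Range}(\Ba)$, which is the solvability condition already invoked in Section \ref{Id}.

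First I would check that $L:=\mu^*\mu+\Ba^*\Ba$ is invertible on $\R^\RR$. It is symmetric and nonnegative, and $(Lg,g)_\RR=(\mu g)^2+\|\Ba g\|^2_\RR=0$ forces $\Ba g=0$, hence $g=c\mathbf{1}$, and then $0=\mu g=c\,(\mu\mathbf{1})=c$; so $L$ is positive definite. Next I would introduce the candidate $\psi:=L^{-1}\Ba^*v\in\R^\RR$ and verify that it satisfies the two requirements of the defining system. For the normalization $\mu\psi=0$: pairing $L\psi=\Ba^*v$ with $\mathbf{1}$ and using $\Ba\mathbf{1}=0$ and $\mu\mathbf{1}=1$ gives $(\mu^*\mu\psi,\mathbf{1})_\RR+(\Ba^*\Ba\psi,\mathbf{1})_\RR=(\mu\psi)(\mu\mathbf{1})+(\Ba\psi,\Ba\mathbf{1})_\RR=\mu\psi$ on the left and $(v,\Ba\mathbf{1})_\RR=0$ on the right, hence $\mu\psi=0$. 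Then $\mu^*\mu\psi=0$, so $L\psi=\Ba^*\Ba\psi=\Ba^*v$, i.e. $\Ba^*(\Ba\psi-v)=0$, i.e. $\Ba\psi-v\in\ker\Ba^*=\R\mu^*$; but $\Ba\psi\in\mathrm{Range}(\Ba)$ and $v\in\mathrm{Range}(\Ba)$, and $\mathrm{Range}(\Ba)\cap\R\mu^*=\{0\}$ because $\mathrm{Range}(\Ba)=(\ker\Ba^*)^\perp$, so $\Ba\psi=v$. Thus $\psi$ solves $\Ba(u)\psi=v$ with $\mu(u)\psi=0$ in each coordinate $i$, which is precisely the system defining $\Phi(\cdot,u)$ paired with $e_i$.

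To conclude I would invoke uniqueness: the system defining $\Phi$ has a unique solution by the Fredholm alternative, exactly as used in Section \ref{Id}, so, recombining the coordinates, $\Phi(\cdot,u)=-L^{-1}\Ba^*(u)(G_\cdot(u)-F(u))=-(\mu(u)^*\mu(u)+\Ba^*(u)\Ba(u))^{-1}\Ba^*(u)(G_\cdot(u)-F(u))$, as claimed.

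The only genuinely delicate point is the bookkeeping around adjoints and inner products: one must fix the inner-product convention on $\R^\RR$ so that $\Ba^*(u)$ and $\mu(u)^*$ are the operators appearing in the statement, and verify that the left kernel of $\Ba(u)$ is spanned by $\mu(u)$ — equivalently $\mu(u)\Ba(u)=0$ — which is just weak irreducibility together with stationarity of $\mu(u)$. Everything else is the orthogonal splitting $\R^\RR=\ker\Ba\oplus\mathrm{Range}(\Ba^*)=\ker\Ba^*\oplus\mathrm{Range}(\Ba)$ and the observation that the rank-one perturbation $\mu^*\mu$ restores invertibility of $L$ without disturbing the component of $\psi$ orthogonal to $\mathbf{1}$.
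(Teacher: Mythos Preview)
Your proof is correct and follows essentially the same route as the paper: both reduce to showing that $L=\mu(u)^*\mu(u)+\Ba^*(u)\Ba(u)$ is invertible on $\R^\RR$, and your positive-definiteness argument is a cleaner version of the paper's kernel computation. The only presentational difference is that the paper stacks the two constraints as $D(u)\Phi=(0,-(G_\cdot-F))^T$ with $D(u)=(\mu(u),\Ba(u))^T$ and left-multiplies by $D(u)^*$ to obtain the formula directly from $D^*D=L$, whereas you write down the candidate $-L^{-1}\Ba^*v$ and verify it satisfies the defining system via the orthogonal splitting $\R^\RR=\ker\Ba^*\oplus\mathrm{Range}(\Ba)$; both arguments are equivalent and yours is slightly more self-contained.
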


\begin{proof}
The application $\Phi$ is defined by the two conditions
\begin{equation}\label{def:Phi}
\left\{\begin{array}{ccc}
\Ba(u)\Phi(r,u)&=&-(G_r(u)-F(u))\\
\int_\RR\Phi(r,u)\mu(u)(dr)&=&0
\end{array}\right.
\end{equation}
for $(u,r)\in H\times\RR$. Let $u\in H$ held fixed. Defining $D(u)=(\mu(u),\Ba(u))^T$, we have that (\ref{def:Phi}) reduces to
\[
D(u)\Phi(\cdot,u)=-\left(\begin{array}{c}
0\\G_\cdot(u)-F(u)
\end{array}\right)
\]
Then
\[
D^*(u)D(u)\Phi(\cdot,u)=-D^*(u)\left(\begin{array}{c}
0\\G_\cdot(u)-F(u)
\end{array}\right)
\]
The key point is that the operator $D^*(u)D(u)$ is invertible. Indeed
\[
D^*(u)D(u)=\mu(u)^*\mu(u)+\Ba^*(u)\Ba(u)
\]
and the kernel of the two operators $\mu(u)^*\mu(u)$ and $\Ba^*(u)\Ba(u)$ are in direct sum and span the whole space  $\R^{N}$. Let ${\rm x}\in \R^{|\ZZ|}$, then ${\rm x}={\rm z}+{\rm y}$ with ${\rm z}\in {\rm Ker }\mu(u)^*\mu(u)$ and ${\rm y}\in {\rm Ker }\Ba^*(u)\Ba(u)$. We have
\[
\mu(u)^*\mu(u){\rm y}+\Ba^*(u)\Ba(u){\rm z}=0
\]
Since $\Ba(u)\mathbf{1}=0$ (where $\mathbf{1}\in \R^{|\RR|}$) and $\mu(u)\mathbf{1}=1$, multiplying the above equation to the left by $\mathbf{1}^T$ we have
\[
\mu(u){\rm y}=0
\]
Since $y\in {\rm Ker }\Ba^*(u)\Ba(u)={\rm Ker }\Ba(u)={\rm span}\mathbf{1}$ here, we have ${\rm y}=y\mathbf{1}$ with $y\in\R$ and
\[
\mu(u){\rm y}=\mu(u)y\mathbf{1}=y
\]
and thus $y=0$ and ${\rm y}=0$. Therefore ${\rm x}={\rm z}\in{\rm Ker }\mu(u)^*\mu(u)$ and $\Ba^*(u)\Ba(u){\rm z}=0$. Thus $z\in {\rm Ker }\mu(u)^*\mu(u)\cap {\rm Ker }\Ba^*(u)\Ba(u)=\{0\}$ and ${\rm x}={\rm z}=0$. The operator $D^*(u)D(u)$ is then invertible.
\end{proof}

\begin{proposition}[Second representation of the diffusion operator]\label{SRDO}
For any $(u,r)\in H\times\RR$
\[
\Phi(u,r)=\int_0^\infty \EE_r(G_{r^u_s}(u)-F(u))ds
\]
where $r^u$ has the same law of $r$ when $u$ is held fixed.
\end{proposition}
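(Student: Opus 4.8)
The plan is to show that the right-hand side, which I denote
$\Psi(r,u):=\int_0^\infty\EE_r\big(G_{r^u_s}(u)-F(u)\big)\,ds$,
is well defined and satisfies the two conditions that characterise $\Phi$ in (\ref{def:Phi:ms}), namely $\Ba(u)\Psi(\cdot,u)=-(G_\cdot(u)-F(u))$ and $\int_\RR\Psi(r,u)\,\mu(u)(dr)=0$; the uniqueness already obtained from the Fredholm alternative then forces $\Psi=\Phi$. Throughout, fix $u\in H$, write $h(r):=G_r(u)-F(u)\in H^*$ and let $(P^u_s)_{s\ge0}=(e^{s\Ba(u)})_{s\ge0}$ be the semigroup of the finite-state Markov chain $r^u$ obtained by freezing $u$, so that $(P^u_sh)(r)=\EE_r(h(r^u_s))$ and $\Psi(r,u)=\int_0^\infty(P^u_sh)(r)\,ds$.

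First I would check that the integral converges. In the all-fast case the chain $r^u$ lives on the finite set $\RR=E^{\ZZ}$ and is irreducible (it is a product of the irreducible coordinate chains, whose nonzero rates are bounded below by $\alpha_-$), so $\Ba(u)$ has a spectral gap and $P^u_s$ converges geometrically to the projection onto $\mu(u)$: there are $C,\lambda>0$ such that $\sup_{r\in\RR}\|(P^u_sh)(r)-\mu(u)(h)\|_{H^*}\le Ce^{-\lambda s}\max_{r\in\RR}\|h(r)\|_{H^*}$. By the very definition of $F$ one has $\mu(u)(h)=\int_\RR(G_r(u)-F(u))\,\mu(u)(dr)=0$, so $\|(P^u_sh)(r)\|_{H^*}$ decays exponentially in $s$ and the Bochner integral defining $\Psi(r,u)$ converges absolutely in $H^*$.

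Next I would verify the two defining conditions. For the centering condition, invariance of $\mu(u)$ gives $\int_\RR(P^u_sh)(r)\,\mu(u)(dr)=\mu(u)(h)=0$ for every $s\ge0$, hence $\int_\RR\Psi(r,u)\,\mu(u)(dr)=0$ by Fubini. For the Poisson equation, $\Ba(u)$ is a bounded operator acting only on the (finite) $r$-index and it commutes with $P^u_s$, with $\frac{d}{ds}P^u_sh=\Ba(u)P^u_sh$; hence for every $T>0$,
\[
\Ba(u)\int_0^T(P^u_sh)\,ds=\int_0^T\Ba(u)P^u_sh\,ds=\int_0^T\frac{d}{ds}(P^u_sh)\,ds=P^u_Th-h.
\]
Letting $T\to\infty$: the left-hand side converges to $\Ba(u)\Psi(\cdot,u)$ by continuity of the bounded operator $\Ba(u)$ and convergence of the integral, while $P^u_Th\to\mu(u)(h)=0$; therefore $\Ba(u)\Psi(r,u)=-h(r)=-(G_r(u)-F(u))$. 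So $\Psi(\cdot,u)$ solves (\ref{def:Phi:ms}) and, by uniqueness, $\Psi=\Phi$.

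The only point that needs genuine care is the uniform (in $s$) exponential ergodicity of $r^u$ invoked for the convergence of the time integral and the passage $T\to\infty$; this is exactly where the standing assumptions — weak irreducibility of $\Ba(u)$ and the lower bound $\alpha_-$ on the nonzero rates — are used. The rest is the standard representation of the solution of a Poisson equation as the value at zero of the resolvent and, since $\RR$ is finite and $u$ is frozen, amounts to elementary linear algebra and calculus carrying along the fixed $H^*$-valued data.
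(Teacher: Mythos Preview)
Your argument is correct. The route, however, is the reverse of the paper's: you define the candidate $\Psi(r,u)=\int_0^\infty P^u_sh(r)\,ds$, establish absolute convergence via the spectral gap of $\Ba(u)$, and then verify the two characterising conditions (\ref{def:Phi:ms}) by semigroup calculus, concluding $\Psi=\Phi$ via the Fredholm uniqueness. The paper instead starts from the already-constructed $\Phi$, applies Dynkin's formula to the function $r\mapsto\Phi(u,r)$ along the frozen chain $r^u$ to obtain
\[
\EE_r(\Phi(u,r^u_t))=\Phi(u,r)-\int_0^t\EE_r\big(G_{r^u_s}(u)-F(u)\big)\,ds,
\]
and then lets $t\to\infty$, using ergodicity together with the centering $\int_\RR\Phi(r,u)\,\mu(u)(dr)=0$ to kill the left-hand side. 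Both approaches rely on the same ergodic input; the paper's version is a little shorter because it never needs to justify convergence of the improper integral separately (it falls out of the convergence of $\EE_r(\Phi(u,r^u_t))$), whereas your version has the advantage of being self-contained: it would prove existence of a centred solution of the Poisson equation even without invoking the Fredholm alternative beforehand.
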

\begin{proof}
It is a direct consequence of the fact that the process defined by
\[
M_t=\Phi(u,r^u_t)-\Phi(u,r)-\int_0^t\Ba(u)\Phi(u,r^u_s)ds
\]
is a martingale for the natural filtration associated to the process $r^u$. Then, taking expectation and recalling that
\begin{equation}\label{def:Phi2}
\left\{\begin{array}{ccc}
\Ba(u)\Phi(r,u)&=&-(G_r(u)-F(u))\\
\int_\RR\Phi(r,u)\mu(u)(dr)&=&0
\end{array}\right.
\end{equation}
yields:
\[
\EE_r(\Phi(u,r^u_t))=\Phi(u,r)-\int_0^t\EE_r(G_{r^u_s}(u)-F(u))ds
\]
Since:
\[
\lim_{t\to\infty}\EE_r(\Phi(u,r^u_t))=\int_\RR\Phi(r,u)\mu(u)(dr)=0
\]
the desired result follows.
\end{proof}

\begin{proposition}
The diffusion operator $C(u)$, for $u\in H$, is positive in the sense that
\[
\text{Tr } C(u)\geq0
\]
Therefore the operator $B(u)$ is well defined.
\end{proposition}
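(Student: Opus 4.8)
The plan is to evaluate $\text{Tr } C(u)=\sum_{i\geq1}<C(u)e^*_i,e_i>$ term by term and to recognise each diagonal entry as the value of a Dirichlet form of the fast chain (with $u$ held fixed), which is automatically non-negative. Fix $u\in H$ and set $g_r:=G_r(u)-F(u)\in H^*$. For $i\geq1$ introduce the scalar functions $h_i,\psi_i:\RR\to\R$ defined by $h_i(r):=<g_r,e_i>$ and $\psi_i(r):=<\Phi(r,u),e_i>$; since $\RR$ is finite, $h_i,\psi_i\in L^2(\RR,\mu(u))$. By the definition of $F$, $\int_\RR h_i\,d\mu(u)=<\int_\RR g_r\,\mu(u)(dr),e_i>=0$. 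Since $\Ba(u)$ acts only on the channel variable and is linear in its $H^*$-valued output, it commutes with the pairing $<\cdot,e_i>$; projecting the relation $\Ba(u)\Phi(\cdot,u)=-g_\cdot$ onto $e_i$ therefore gives $-\Ba(u)\psi_i=h_i$ together with $\int_\RR\psi_i\,d\mu(u)=0$ (equivalently, by Proposition \ref{SRDO}, $\psi_i(r)=\int_0^\infty\EE_r\big(h_i(r^u_s)\big)\,ds$). Taking $j=i$ in the characterisation of $C(u)$ then yields
\[
<C(u)e^*_i,e_i>=\int_\RR h_i(r)\psi_i(r)\,\mu(u)(dr)=-<\Ba(u)\psi_i,\psi_i>_{\mu(u)},
\]
where $<\cdot,\cdot>_{\mu(u)}$ is the inner product of the real Hilbert space $L^2(\RR,\mu(u))$.

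It remains to show that $-<\Ba(u)\psi,\psi>_{\mu(u)}\geq0$ for every $\psi$, i.e. that the Dirichlet form $\mathcal{E}^u(\psi,\psi):=-<\psi,\Ba(u)\psi>_{\mu(u)}$ of the fast chain is non-negative (the two expressions coincide by symmetry of the scalar product). This is the classical non-negativity of the Dirichlet form of a Markov generator; since $\Ba(u)$ need not be self-adjoint in $L^2(\mu(u))$, I would justify it by symmetrisation. The value $-<\psi,\Ba(u)\psi>_{\mu(u)}$ is unchanged if $\Ba(u)$ is replaced by $\Ba_{\mathrm{sym}}(u):=\tfrac12\big(\Ba(u)+\Ba(u)^{*}\big)$, where $\Ba(u)^{*}$ is the $L^2(\mu(u))$-adjoint, namely the generator of the $\mu(u)$-time-reversed chain; $\Ba(u)^{*}$ still has non-negative off-diagonal entries and vanishing row sums, hence so does $\Ba_{\mathrm{sym}}(u)$, which is thus a genuine Markov generator, reversible with respect to $\mu(u)$. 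For such a generator the Dirichlet form has the manifestly non-negative form $\tfrac12\sum_{r,\tilde r}\mu(u)(r)\,\Ba_{\mathrm{sym}}(u)(r,\tilde r)\,(\psi(r)-\psi(\tilde r))^2\geq0$. Consequently $<C(u)e^*_i,e_i>=\mathcal{E}^u(\psi_i,\psi_i)\geq0$ for every $i$, and summing over $i$ gives $\text{Tr } C(u)\geq0$ (the sum lying a priori in $[0,+\infty]$, which suffices, since as recalled in Section \ref{Main} the operator $C(u)$ is not of trace class in $H$).

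Finally, redoing the computation of the first paragraph with an arbitrary finite linear combination $v=\sum_iv_ie_i\in H$ in place of a single $e_i$ — i.e. with $<g_\cdot,v>$ and $<\Phi(\cdot,u),v>$ in place of $h_i$ and $\psi_i$ — shows that the bilinear form $(v,w)\mapsto\sum_{i,j}v_iw_j<C(u)e^*_j,e_i>$ is symmetric and, evaluated at $v=w$, equals $\mathcal{E}^u\big(<\Phi(\cdot,u),v>,<\Phi(\cdot,u),v>\big)\geq0$; thus $C(u)$ is a symmetric non-negative operator and its square root $B(u)=C(u)^{1/2}$ is well defined, as required to give a meaning to $B_{\bar r}(u)$ in (\ref{eq:dev}). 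The only genuinely delicate point is the non-negativity of the Dirichlet form in the non-reversible case, handled by the symmetrisation argument above; all remaining steps follow directly from Proposition \ref{SRDO} and from the defining formula of $C(u)$.
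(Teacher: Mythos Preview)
Your argument follows the same skeleton as the paper's: write each diagonal entry of $C(u)$ as $\int_\RR<G_r(u)-F(u),e_k><\Phi(r,u),e_k>\mu(u)(dr)$, substitute $G_r(u)-F(u)=-\Ba(u)\Phi(r,u)$ from the Poisson equation, and invoke the non-positivity of $\Ba(u)$. The paper dispatches the last step in one line (``all the eigenvalues of $\Ba(u)$ are non positive''), whereas you justify it properly via the Dirichlet form and symmetrisation, which is the right thing to do since $\Ba(u)$ need not be $\mu(u)$-symmetric and a spectral statement alone does not control the quadratic form. You also go beyond the paper by checking that $C(u)$ is positive semidefinite as an operator, not merely that its trace is non-negative; this is in fact what is needed to take the square root $B(u)=C(u)^{1/2}$, so your final paragraph closes a small logical gap in the statement as written.
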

\begin{proof}
For $u\in H$ we have:
\begin{eqnarray*}
\text{Tr }C(u)&=&\sum_{k\geq1}\int_\RR<G_r(u)-F(u_t),e_k><\Phi(r,u),e_k>\mu(u)(dr)\\
&=&-\sum_{k\geq1}\int_\RR<\Ba(u)\Phi(r,u),e_k><\Phi(r,u),e_k>\mu(u)(dr)\\
&\geq&0
\end{eqnarray*}
because of the negativity of the operator $\Ba(u)$: all the eigenvalues of the operator $\Ba(u)$ are non positive.
\end{proof}

\subsection{The multi-scale case}\label{Multi}

Let us recall that by multi-scale case we mean the case when there is at least two spaces $E_i$ or in other words, the ionic channels exhibit different time scales.\\
For the tightness and the identification of the limit, this is, as said before, mainly a complication in the notations and this is why we will be very brief in this section. We refer for example, in the finite dimensional case, to the work of \cite{WTh} or in our framework for the proof of averaging, to Section 3.2 of \cite{GT}. For another instructive example dealing with slow-fast continuous Markov chain, see \cite{YZ}.

\subsection{Langevin approximation}\label{Langevin}

This section is devoted to the study of the Stochastic Hodgkin-Huxley model with mollifier. The model is presented in Section \ref{Main}. More particularly, we are interested in this section by the Langevin approximation of the averaged model. We begin by the proof of Proposition \ref {A1} and \ref{lang:def}. As for the Central Limit Theorem we detail the proof only in the all-fast case.

\begin{proposition}
The following estimate holds, where the trace is taken in the $L^2(I)$-sense,
\[
\text{Tr }\int_0^t e^{\D (t-s)}C_\phi(u_s)e^{\D (t-s)}ds\leq \sum_{k\geq1}\int_0^t(\alpha\|u_s\|^2_{L^2(I)}+\beta\|u_s\|_{L^2(I)}+\gamma )e^{-2(k\pi)^2(t-s)}ds 
\]
for all $t\in[0,T]$ and all functions $u\in\mathcal{C}([0,T],L^2(I))$ with $\alpha,\beta,\gamma$ three constants.
\end{proposition}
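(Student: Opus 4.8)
\noindent The plan is to diagonalise the heat semigroup in the $L^2(I)$-eigenbasis of the Laplacian and thereby reduce the statement to a single affine bound, uniform in the mode index, for the diagonal entries of the diffusion operator $C_\phi$.

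First I would record that $\{f_k\}_{k\geq1}$ is a Hilbert basis of $L^2(I)$ with $\|f_k\|_{L^2(I)}=1$ and $e^{\D\tau}f_k=e^{-(k\pi)^2\tau}f_k$ for $\tau\geq0$. Since $C_\phi(u)=B_\phi(u)B_\phi(u)^*$ is nonnegative and symmetric, $\int_0^t e^{\D(t-s)}C_\phi(u_s)e^{\D(t-s)}ds$ is a nonnegative operator, and using the self-adjointness of $e^{\D\tau}$ in $L^2(I)$ together with Tonelli's theorem,
\[
\text{Tr}\int_0^t e^{\D(t-s)}C_\phi(u_s)e^{\D(t-s)}ds=\sum_{k\geq1}\int_0^t e^{-2(k\pi)^2(t-s)}\,(C_\phi(u_s)f_k,f_k)_{L^2(I)}\,ds .
\]
Hence it suffices to prove $(C_\phi(u)f_k,f_k)_{L^2(I)}\leq \alpha\|u\|^2_{L^2(I)}+\beta\|u\|_{L^2(I)}+\gamma$ with $\alpha,\beta,\gamma$ independent of $k$, of $u\in L^2(I)$ and of the (finitely many) averaged states $\bar r$.

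Next I would use the characterisation of the diffusion operator from Theorem \ref{DEDP}, transported to the mollified model, in the form $(C_\phi(u)f_k,f_k)_{L^2(I)}=\int_\RR (G_{r,\phi}(u)-F_\phi(u),f_k)_{L^2(I)}(\Phi_\phi(r,u),f_k)_{L^2(I)}\,\mu(u)(dr)$, where $\Phi_\phi$ is the corrector for the mollified model. Cauchy--Schwarz together with $\|f_k\|_{L^2(I)}=1$ gives
\[
\big|(C_\phi(u)f_k,f_k)_{L^2(I)}\big|\leq \Big(\sup_{r\in\RR}\|G_{r,\phi}(u)-F_\phi(u)\|_{L^2(I)}\Big)\Big(\sup_{r\in\RR}\|\Phi_\phi(r,u)\|_{L^2(I)}\Big),
\]
the suprema over the finite set $\RR$ also absorbing any dependence on $\bar r$. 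From the explicit expressions for $G_{r,\phi}$ and $F_\phi$, bounding $|c_\xi|,|v_\xi|$, using $0\leq\mu_j(\cdot)(\zeta)\leq1$, the boundedness of $\|\phi_{z_i}\|_{L^2(I)}$ and $|(u,\phi_{z_i})_{L^2(I)}|\leq\|\phi_{z_i}\|_{L^2(I)}\|u\|_{L^2(I)}$, one obtains an affine estimate $\sup_r\|G_{r,\phi}(u)-F_\phi(u)\|_{L^2(I)}\leq a\|u\|_{L^2(I)}+b$ with $a,b$ depending only on the data of the model; this step is routine.

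The crux is to control $\sup_r\|\Phi_\phi(r,u)\|_{L^2(I)}$ by the same affine quantity. I would invoke the representation of Proposition \ref{SRDO} (valid in $L^2(I)$, the fast generator acting only in the channel variable), $\Phi_\phi(r,u)=\int_0^\infty \EE_r\big(G_{r^u_s,\phi}(u)-F_\phi(u)\big)ds$, and use that in the all-fast case $\Ba(u)$ is the generator of $N$ independent, weakly irreducible chains on $E$ whose rates are bounded above by $\alpha^+$ and, when not identically zero, below by $\alpha_-$; this yields a spectral gap $\lambda_0>0$ uniform in $u$, whence $\|\EE_r(G_{r^u_s,\phi}(u)-F_\phi(u))\|_{L^2(I)}\leq Ce^{-\lambda_0 s}\sup_r\|G_{r,\phi}(u)-F_\phi(u)\|_{L^2(I)}$ (the centring under $\mu(u)$ being exactly the defining property of $F_\phi$). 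Integrating in $s$ gives $\sup_r\|\Phi_\phi(r,u)\|_{L^2(I)}\leq (C/\lambda_0)(a\|u\|_{L^2(I)}+b)$; the same bound can alternatively be read off the first representation $\Phi_\phi(\cdot,u)=-(\mu(u)^*\mu(u)+\Ba^*(u)\Ba(u))^{-1}\Ba^*(u)(G_{\cdot,\phi}(u)-F_\phi(u))$, whose pseudo-inverse is controlled by the same uniform gap. Combining, $(C_\phi(u)f_k,f_k)_{L^2(I)}\leq (C/\lambda_0)(a\|u\|_{L^2(I)}+b)^2$, which after expanding the square is of the desired form $\alpha\|u\|^2_{L^2(I)}+\beta\|u\|_{L^2(I)}+\gamma$ with constants independent of $k$ and $\bar r$; substituting into the first display finishes the proof. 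The only genuinely delicate ingredient is the uniform-in-$u$ exponential ergodicity of the fast chain used to bound $\Phi_\phi$; the rest is bookkeeping with the mollifiers $\phi_{z_i}$ and the diagonalisation of the heat semigroup.
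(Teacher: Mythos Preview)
Your argument is correct and is precisely the approach the paper intends: its own proof consists of the single sentence that the estimate follows from Proposition~\ref{SRDO} (the integral representation of $\Phi$), together with a pointer to the explicit Morris--Lecar computation in Section~\ref{Exemple}. You have filled in exactly those details --- diagonalising the heat semigroup in the $L^2(I)$-basis, bounding the diagonal entries of $C_\phi$ by $\sup_r\|G_{r,\phi}(u)-F_\phi(u)\|_{L^2(I)}\sup_r\|\Phi_\phi(r,u)\|_{L^2(I)}$, and controlling the latter factor via the time-integral formula together with the uniform-in-$u$ spectral gap of the fast chain.
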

\begin{proof}
This is a direct consequence of Proposition \ref{SRDO}. For an explicit example, see Section \ref{Exemple}.
\end{proof}
\noindent In particular, the operator $Q_t$ defined by
\[
Q_t: \psi\in\mapsto \int_0^t e^{\D (t-s)}C(u_s)e^{\D (t-s)}\psi ds
\]
with $(j,u)\in \{1,\cdots,l\}\times \mathcal{C}([0,T],L^2(I))$ is of trace class in $L^2(I)$. The Langevin approximation of $u$ is then well defined as stated in the following proposition. We recall that in the all-fast case
\[
 F_\phi(u)=\frac{1}{N}\sum_{i\in \ZZ}\sum_{\xi\in E}c_\xi \mu((u,\phi_{z_i})_{L^2(I)})(\xi)(v_\xi-(u,\phi_{z_i})_{L^2(I)})\phi_{z_i}
\]
for $u\in L^2(I)$.
\begin{proposition}
Let $\e>0$. The SPDE
\begin{equation}
d\tilde u^\e=[\Delta \tilde u^\e + F_\phi(\tilde u^\e)]dt+\sqrt{\e}B_\phi(\tilde u^\e)dW_t
\end{equation}
with initial condition $u_0$ and zero Dirichlet boundary condition has a unique solution with sample paths in $\mathcal{C}([0,T],L^2(I))$. Moreover the quantity
\begin{equation}\label{bound:lang}
\sup_{t\in[0,T]}\EE(\|\tilde u^\e_t\|^2_{L^2(I)})
\end{equation}
is finite.
\end{proposition}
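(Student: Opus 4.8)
The plan is to realise $\tilde u^\e$ as the unique \emph{mild} solution of the stated equation, that is, of
\[
\tilde u^\e_t=e^{\D t}u_0+\int_0^t e^{\D (t-s)}F_\phi(\tilde u^\e_s)\,ds+\sqrt\e\int_0^t e^{\D (t-s)}B_\phi(\tilde u^\e_s)\,dW_s,
\]
and to deduce existence, uniqueness and the moment bound from the classical well-posedness theory of semilinear stochastic evolution equations in a Hilbert space (\cite{DPZ}). To apply that theory I would verify three facts. (i) $\{e^{\D t},t\ge0\}$ is a strongly continuous---indeed analytic, contraction---semigroup on $L^2(I)$, which is immediate from the spectral decomposition recalled in Section \ref{Not}. (ii) $F_\phi:L^2(I)\to L^2(I)$ is Lipschitz on bounded sets and of linear growth: each $\phi_{z_i}$ lies in $L^2(I)$, the maps $u\mapsto(u,\phi_{z_i})_{L^2(I)}$ are bounded linear functionals, and $y\mapsto \mu(y)$, hence $y\mapsto c_\xi\mu(y)(\xi)(v_\xi-y)$, is bounded and Lipschitz on bounded subsets of $\R$ by the standing assumptions on the rate functions and the quasi-stationary distribution. (iii) The noise coefficient obeys the Hilbert--Schmidt integrability and Lipschitz estimates described in the next paragraph.

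For the integrability, the content of Proposition \ref{A1} is precisely that for every $u\in\mathcal{C}([0,T],L^2(I))$ and every $t>0$,
\[
\text{Tr}\int_0^t e^{\D (t-s)}C_\phi(u_s)e^{\D (t-s)}\,ds\le\sum_{k\ge1}\int_0^t\bigl(\alpha\|u_s\|^2_{L^2(I)}+\beta\|u_s\|_{L^2(I)}+\gamma\bigr)e^{-2(k\pi)^2(t-s)}\,ds<\infty,
\]
the finiteness coming from $\sum_{k\ge1}\int_0^t e^{-2(k\pi)^2(t-s)}\,ds=\sum_{k\ge1}\frac{1-e^{-2(k\pi)^2t}}{2(k\pi)^2}<\infty$; note also that the right-hand side grows at most linearly in $\sup_{s\le t}\|u_s\|^2_{L^2(I)}$. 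This is exactly the condition under which $s\mapsto e^{\D (t-s)}B_\phi(u_s)$ is stochastically integrable and the stochastic convolution is a well-defined $L^2(I)$-valued process; since $e^{\D t}$ is analytic, the factorization method of \cite{DPZ} then produces a version with paths in $\mathcal{C}([0,T],L^2(I))$. For the Lipschitz dependence I would use the explicit representation $\Phi(u,r)=\int_0^\infty\EE_r\bigl(G_{r^u_s,\phi}(u)-F_\phi(u)\bigr)\,ds$ of Proposition \ref{SRDO}, together with the Lipschitz continuity in $u$ of $\mu$ and of the rate functions, to show that $u\mapsto C_\phi(u)$ is Lipschitz on bounded sets in trace norm; because the non-zero rates are bounded below by $\alpha_-$, the relevant entries stay bounded below on the range of interest and a measurable square root $u\mapsto B_\phi(u)$ of $C_\phi(u)$ can be chosen that is Lipschitz on bounded sets, yielding the matching bound for $\|e^{\D (t-s)}(B_\phi(u_s)-B_\phi(v_s))\|$ in Hilbert--Schmidt norm, with the same summable weight $e^{-2(k\pi)^2(t-s)}$.

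With (i)--(iii) in hand, local existence and uniqueness of a mild solution in $\mathcal{C}([0,\tau],L^2(\Omega,L^2(I)))$ on a short interval $[0,\tau]$ follow from a Banach fixed point argument (Picard iteration) on the mild map, the factorization estimate supplying the required contraction for the stochastic term. Since the coefficients are only locally Lipschitz with linear growth, I would first solve the equation with $F_\phi$ and $B_\phi$ truncated outside $\{\,\|u\|_{L^2(I)}\le R\,\}$, obtaining for each $R$ a global solution $\tilde u^{\e,R}$ and a stopping time $\sigma_R=\inf\{t:\|\tilde u^{\e,R}_t\|_{L^2(I)}\ge R\}$; consistency of these solutions defines a solution up to $\sigma_\infty=\lim_R\sigma_R$. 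To see that $\sigma_\infty=T$ a.s.\ and to obtain the bound on $\sup_{t\in[0,T]}\EE\|\tilde u^\e_t\|^2_{L^2(I)}$, I would estimate the second moment directly from the mild formula for the truncated solution: $\|e^{\D t}u_0\|_{L^2(I)}\le\|u_0\|_{L^2(I)}$; the deterministic convolution is controlled by $\int_0^t\|F_\phi(\tilde u^{\e,R}_s)\|_{L^2(I)}\,ds$ and the linear growth of $F_\phi$; and the stochastic convolution, by a Burkholder-type/factorization estimate combined with Proposition \ref{A1}, has $\EE\sup_{s\le t}\|\cdot\|^2_{L^2(I)}$ bounded by $C\e\int_0^t\EE\bigl(1+\|\tilde u^{\e,R}_s\|^2_{L^2(I)}\bigr)\,ds$. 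Gronwall's lemma then bounds $\sup_{t\le T}\EE\|\tilde u^{\e,R}_t\|^2_{L^2(I)}$ uniformly in $R$, so $\PP(\sigma_R<T)\to0$, hence $\sigma_\infty=T$, and passing to the limit yields the global solution together with the asserted moment bound.

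The step I expect to be the main obstacle is the handling of the noise term. The covariance $C_\phi(u)$ is \emph{not} of trace class in $L^2(I)$ on its own---it becomes so only after conjugation by the semigroup, which is precisely Proposition \ref{A1}---so one cannot use a naive Itô formula for $\|\tilde u^\e_t\|^2_{L^2(I)}$ (the formal correction term $\text{Tr}\,C_\phi(\tilde u^\e_t)$ need not be finite) and must work throughout with the regularised operator $e^{\D (t-s)}B_\phi(\cdot)$, i.e.\ with the factorization method of \cite{DPZ}. The attendant technical point is exhibiting a square root $B_\phi$ of $C_\phi$ that depends Lipschitz-continuously on $u$; this is where the lower bound $\alpha_-$ on the non-zero jump rates is used, exactly as in the finite-dimensional chemical Langevin setting.
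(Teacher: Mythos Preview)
Your proposal is correct and follows the same approach as the paper: verify the hypotheses of the Da~Prato--Zabczyk theory (analytic contraction semigroup for $\Delta$, local Lipschitz continuity of $F_\phi$, and Proposition~\ref{A1} for the stochastic convolution) and then obtain the moment bound via a mild-formula/BDG/Gronwall argument, which is exactly what the paper means by ``arguments similar to those used in the proof of the following theorem.'' You have simply made explicit the details---the truncation to pass from local to global Lipschitz, the factorization method for pathwise continuity, and the square-root regularity issue---that the paper's two-sentence proof leaves implicit.
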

\begin{proof}
Thanks to the properties of the laplacian operator, the local Lipschitz continuity of $F_\phi$ and Proposition \ref{A1}, we can apply classical results on SPDE, see for example \cite{DPZ}. The bound (\ref{bound:lang}) is obtained using arguments similar to those used in the proof of the following theorem.
\end{proof}
\noindent We proceed to the proof of Theorem \ref{thm:Lang}.
\begin{theorem}
Let $T>0$ held fixed. The Langevin approximation is indeed an approximation of $u$
\begin{equation}
\lim_{\e\to0}\EE\left(\sup_{t\in[0,T]}\|u_t-\tilde u^\e_t\|^2_{L^2(I)}\right)=0
\end{equation}
\end{theorem}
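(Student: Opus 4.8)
The plan is to compare the Langevin process $\tilde u^\e$ with the averaged process $u$ by a direct energy estimate, using that both solve the same PDE up to the small noise term $\sqrt\e B_\phi(\tilde u^\e)\,dW_t$ and that the drift operator $\Delta + F_\phi(\cdot)$ is dissipative in $L^2(I)$. First I would write the mild or variational form of the difference $d^\e_t := \tilde u^\e_t - u_t$, which satisfies $d d^\e_t = [\Delta d^\e_t + F_\phi(\tilde u^\e_t) - F_\phi(u_t)]\,dt + \sqrt\e B_\phi(\tilde u^\e_t)\,dW_t$ with $d^\e_0 = 0$. Applying It\^o's formula to $\|d^\e_t\|^2_{L^2(I)}$, the $\langle \Delta d^\e_t, d^\e_t\rangle$ term is $\le -2\pi^2\|d^\e_t\|^2_{L^2(I)} \le 0$, the reaction difference is controlled by the local Lipschitz property of $F_\phi$ together with the uniform bounds on $u$ (Proposition \ref{APDMP}) and on $\tilde u^\e$ (Proposition \ref{lang:def}), giving a term bounded by $C\|d^\e_t\|^2_{L^2(I)}$, and the It\^o correction contributes $\e\,\mathrm{Tr}\,C_\phi(\tilde u^\e_t)$ plus a martingale term $2\sqrt\e\int_0^t (d^\e_s, B_\phi(\tilde u^\e_s)\,dW_s)$.

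Next I would take the supremum over $t\in[0,T]$ and then expectations. The deterministic terms are handled by Gronwall after bounding $\EE\sup_t \|d^\e_t\|^2$ against $\int_0^T \EE\|d^\e_s\|^2\,ds$. For the stochastic integral I would use the Burkholder--Davis--Gundy inequality: $\EE\sup_{t\le T}\big|\int_0^t (d^\e_s, \sqrt\e B_\phi(\tilde u^\e_s)\,dW_s)\big| \le C\sqrt\e\,\EE\big(\int_0^T \|d^\e_s\|^2_{L^2(I)}\,\mathrm{Tr}\,C_\phi(\tilde u^\e_s)\,ds\big)^{1/2}$, which after the elementary inequality $ab \le \tfrac14 a^2 + b^2$ is absorbed partly into $\tfrac14\EE\sup_t\|d^\e_t\|^2$ and partly into an $O(\e)$ term, using $\sup_{t\le T}\EE\,\mathrm{Tr}\,C_\phi(\tilde u^\e_t) < \infty$ (a consequence of Proposition \ref{A1} and the bound \eqref{bound:lang}). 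The It\^o-correction term $\e\,\EE\int_0^T \mathrm{Tr}\,C_\phi(\tilde u^\e_s)\,ds$ is likewise $O(\e)$ by Proposition \ref{A1}. Collecting everything yields
\[
\EE\Big(\sup_{t\in[0,T]}\|d^\e_t\|^2_{L^2(I)}\Big) \le C_1\e + C_2\int_0^T \EE\Big(\sup_{l\le s}\|d^\e_l\|^2_{L^2(I)}\Big)\,ds,
\]
and Gronwall's lemma gives $\EE(\sup_{t\le T}\|d^\e_t\|^2_{L^2(I)}) \le C_1 e^{C_2 T}\e \to 0$.

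The main obstacle is making the stochastic-integral estimate rigorous: one must control $\mathrm{Tr}\,C_\phi(\tilde u^\e_s)$ uniformly enough to apply BDG, and $C_\phi$ is only trace class \emph{after} smoothing by the heat semigroup, not on its own. The clean way around this is to work with the mild formulation, writing $d^\e_t = \int_0^t e^{\Delta(t-s)}[F_\phi(\tilde u^\e_s)-F_\phi(u_s)]\,ds + \sqrt\e\int_0^t e^{\Delta(t-s)}B_\phi(\tilde u^\e_s)\,dW_s$, and estimating the stochastic convolution directly via the factorization method, using precisely the trace-class bound of Proposition \ref{A1} for $Q_t$; this shows $\EE\sup_{t\le T}\|\sqrt\e\int_0^t e^{\Delta(t-s)}B_\phi(\tilde u^\e_s)\,dW_s\|^2_{L^2(I)} = O(\e)$. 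The deterministic convolution is then handled by a standard Gronwall argument using the contractivity of $e^{\Delta t}$ on $L^2(I)$ and the Lipschitz bound on $F_\phi$. The multi-scale case introduces only notational overhead: $F_\phi$ is replaced by $F_{\bar r_t,\phi}$ and one additionally tracks the coupled jump component $\bar r_t$, but since $\bar r$ is the \emph{same} driving jump process in both \eqref{Av:phi} and \eqref{lang} (it does not feel the $\sqrt\e$ noise), the estimate goes through verbatim conditionally on the jump trajectory.
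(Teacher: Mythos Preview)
Your proposal is correct and, once you pivot to the mild formulation, follows exactly the paper's route: bound the deterministic convolution $\int_0^t e^{\Delta(t-s)}[F_\phi(\tilde u^\e_s)-F_\phi(u_s)]\,ds$ via the Lipschitz property of $F_\phi$ and the uniform bound on $u$, bound the stochastic convolution in $L^2(\Omega;\mathcal C([0,T],L^2(I)))$ as $O(\sqrt\e)$ using the trace estimate of Proposition~\ref{A1}, and close with Gronwall. Your remark that the factorization method is the rigorous tool for the $\sup_t$ bound on the stochastic convolution is in fact more careful than the paper's own argument, which applies Burkholder--Davis--Gundy somewhat formally to the stochastic convolution (whose integrand depends on the upper limit $t$ and is therefore not a martingale in $t$).
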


\begin{proof}
First we notice that we have
\begin{eqnarray*}
&~&\tilde u^\e_t-u_t\\
&=&\int_0^t e^{\D (t-s)}(F_\phi(\tilde u^\e_s)-F_\phi(u_s))ds+\sqrt\e\int_0^t e^{\D (t-s)}B_\phi(\tilde u^\e_s)dW_s
\end{eqnarray*}
We deal with the two above terms separately. Recall that for any $u\in L^2(I)$
\begin{eqnarray*}
F_\phi(u)&=&\frac{1}{N}\sum_{\xi\in E}\sum_{i\in \ZZ}c_\xi \mu((u,\phi_{z_i})_{L^2(I)})(\xi)(v_\xi-(u,\phi_{z_i})_{L^2(I)})\phi_{z_i}\\
&=&\frac{1}{N}\sum_{i\in \ZZ}f((u,\phi_{z_i})_{L^2(I)})\phi_{z_i}
\end{eqnarray*}
If moreover $\tilde u\in {L^2(I)}$ we have
\[
|f((\tilde u,\phi_{z_i})_{L^2(I)})-f((u,\phi_{z_i})_{L^2(I)})|\leq\max_{\xi\in E} c_\xi(1+\max_{\xi\in E}|v_\xi|+\|u\|_{L^2(I)})\|\tilde u-u\|_{L^2(I)}
\]
We have, using the fact that the quantity $\sup_{t\in[0,T]}\|u_t\|_{L^2(I)}$ is finite almost-surely
\begin{eqnarray*}
&~&\left\|\int_0^t e^{\D (t-s)}(F_\phi(\tilde u^\e_s)-F_\phi(u_s))ds\right\|^2_{L^2(I)}\\
&=&\left\|\sum_{k\geq1} \frac{1}{N}\sum_{i\in \ZZ}\int_0^t(f((\tilde u^\e_s,\phi_{z_i})_{L^2(I)})-f((u_s,\phi_{z_i})_{L^2(I)}))e^{-(k\pi)^2 (t-s)}ds (\phi_{z_i},f_k)_{L^2(I)}f_k\right\|^2_{L^2(I)}\\
&=&\sum_{k\geq1}\left( \frac{1}{N}\sum_{i\in \ZZ}\int_0^t (f((\tilde u^\e_s,\phi_{z_i})_{L^2(I)})-f((u_s,\phi_{z_i})_{L^2(I)}))e^{-(k\pi)^2(t-s)}ds (\phi_{z_i},f_k)_{L^2(I)}\right)^2\\
&\leq&\frac{C_1}{N}\sum_{i\in \ZZ}\sum_{k\geq1}(\phi_{z_i},f_k)^2_{L^2(I)}\left( \int_0^t (f((\tilde u^\e_s,\phi_{z_i})_{L^2(I)})-f((u_s,\phi_{z_i})_{L^2(I)}))e^{-(k\pi)^2(t-s)}ds \right)^2\\
&\leq&\frac{C_1}{N}\sum_{i\in \ZZ}\sum_{k\geq1}(\phi_{z_i},f_k)^2_{L^2(I)}\left( \int_0^t \max_{\xi\in E} c_\xi(1+\max_{\xi\in E}|v_\xi|+\|u_s\|_{L^2(I)})\|\tilde u^\e-u_s\|_{L^2(I)}ds \right)^2\\
&\leq&\frac{C_2}{N}\sum_{i\in \ZZ}\sum_{k\geq1}(\phi_{z_i},f_k)^2_{L^2(I)}\left(\int_0^t\|\tilde u^\e-u_s\|_{L^2(I)}ds \right)^2
\end{eqnarray*}
where $C_1$ and $C_2$ are two deterministic constants depending only on $T$. Thus, there exists a constant $C_3$ depending only on $T$ such that
\[
\left\|\int_0^t e^{\D (t-s)}(F_\phi(\tilde u^\e_s)-F_\phi(u_s))ds\right\|_{L^2(I)}\leq C_3\int_0^t\|\tilde u^\e-u_s\|_{L^2(I)}ds
\]
We prove now that there exists a constant $C_4$ such that
\begin{equation}\label{eqn2}
\EE\left(\sup_{t\in[0,T]}\left\|\int_0^te^{\D(t-s)}B(\tilde u^\e_s)dW_s\right\|^2_{L^2(I)}\right)\leq C_4
\end{equation}
Using the Burkholder-Davis-Gundy inequality we see that
\begin{eqnarray*}
&~&\EE\left(\sup_{t\in[0,T]}\left\|\int_0^te^{\D(t-s)}B_\phi(\tilde u^\e_s)dW_s\right\|^2_{L^2(I)}\right)\\
&=&\EE\left(\sup_{t\in[0,T]}\sum_{k\geq1}\left(\int_0^t<e^{\D(t-s)}B_\phi(\tilde u^\e_s)f_k,f_k>d\beta_k(s)\right)^2\right)\\
&\leq&\sum_{k\geq1}\EE\left(\sup_{t\in[0,T]}\left(\int_0^t<e^{\D(t-s)}B_\phi(\tilde u^\e_s)f_k,f_k>d\beta_k(s)\right)^2\right)\\
&\leq&\sum_{k\geq1}\EE\left(\int_0^T<e^{\D(t-s)}B_\phi(\tilde u^\e_s)f_k,f_k>^2ds\right)\\
&\leq&\EE\left(\text{Tr }\int_0^Te^{\D s}C_\phi(\tilde u^\e_{t-s})e^{\D s}ds\right)\\
&\leq&C_4
\end{eqnarray*}
with $C_4$ a constant depending only of $T$ according to Proposition \ref{A1} and Proposition \ref{lang:def}. From the above inequality we obtain that
\begin{eqnarray*}
&~&\EE(\sup_{t\in[0,T]}\|\tilde u^\e_t-u_t\|^2_{L^2(I)})\\
&=&\EE\left(\sup_{t\in[0,T]}\left\|\int_0^t e^{\D (t-s)}(F_\phi(\tilde u^\e_s)-F_\phi(u_s))ds+\sqrt\e\int_0^t e^{\D (t-s)}B(\tilde u^\e_s)dW_s\right\|^2_{L^2(I)}\right)\\
&\leq&2\EE\left(\sup_{t\in[0,T]}\left\|\int_0^t e^{\D (t-s)}(F_\phi(\tilde u^\e_s)-F_\phi(u_s))ds\right\|^2_{L^2(I)}\right)+2\e\EE\left(\sup_{t\in[0,T]}\left\|\int_0^t e^{\D (t-s)}B(\tilde u^\e_s)dW_s\right\|^2_{L^2(I)}\right)\\
&\leq&2\EE\left(\left(C_3\int_0^T\sup_{t\in[0,s]}\|\tilde u^\e_t-u_t\|_{L^2(I)}ds\right)^2\right)+2\e C_4\\
&\leq&2C^2_3 T\int_0^T\EE\left(\sup_{t\in[0,s]}\|\tilde u^\e_t-u_t\|^2_{L^2(I)}\right)ds+2\e C_4
\end{eqnarray*}
A standard application of the Gronwall's lemma yields the result.
\end{proof}

\section{Example}\label{Exemple}

We consider in this section a spatially extended stochastic Morris-Lecar model. Since the seminal work \cite{Morris}, the deterministic Morris-Lecar model is considered as one of the classical mathematical models for investigating neuronal behavior. At first, this model was design to describe the voltage dynamic of the barnacle giant muscle fiber (see \cite{Morris} for a complete description of the deterministic Morris-Lecar model). To take into account the intrinsic variability of the ion channels dynamic, a stochastic interpretation of this class of models has been introduced (see \cite{BR,WTh} and references therein) in which ion channels are modeled by jump Markov processes. The model then falls into the class of stochastic Hodgkin-Huxley models considered in the present paper. Let us proceed to the mathematical description of the spatially extended stochastic Morris-Lecar model. In this model, the total current is of the form
\begin{eqnarray*}
&~&G_{r^\text{K},r^\text{Ca}}(u)\\
&=&\frac{1}{C}\left[\frac{1}{N_\text{K}}\sum_{i\in\ZZ_\text{K}}1_{1}(r^\text{K}(i))c_{\text{K}}(v_{\text{K}}-u(z_i))\delta_{z_i}+\frac{1}{N_\text{Ca}}\sum_{i\in\ZZ_\text{Ca}}1_{1}(r^\text{Ca}(i))c_{\text{Ca}}(v_{\text{Ca}}-u(z_i))\delta_{z_i}+I\right]
\end{eqnarray*}
and the evolution equation for the transmembrane potential 
\[
\partial_t u_t=\frac{a}{2RC}\D u_t+G_{r^\text{K}_t,r^\text{Ca}_t}(u_t)
\]
on $[0,T]\times[0,1]$ and with zero Dirichlet boundary condition. The total current is the sum of the potassium K current,  the calcium Ca current and the impulse $I$. The positive constants $a,R,C$ are relative to the bio-physical properties of the membrane. When the voltage is held fixed, for any $i\in\ZZ_\text{q}$ where $q$ is equal to K or Ca, $r^q$ is a continuous time Markov chain with only two states $0$ for {\it closed} and $1$ for {\it open}. The jump rate from $1$ to $0$ is given by $\beta_q(u(z_i))$ and from $0$ to $1$ by $\alpha_q(u(z_i))$. All the jump rates are bounded below and above by positive constants. We will assume that the potassium ion channels communicate at fast rates of order $\frac1\e$ for a small $\e>0$. The calcium rates are of order $1$. The invariant measure associated to each channel $i\in\ZZ_\text{K}$ is given by
\[
\mu^\text{K}_i(u(z_i))=\left(\frac{\beta_\text{K}(u(z_i))}{\alpha_\text{K}(u(z_i))+\beta_\text{K}(u(z_i))},\frac{\alpha_\text{K}(u(z_i))}{\alpha_\text{K}(u(z_i))+\beta_\text{K}(u(z_i))}\right).
\]
Therefore the averaged applied current is 
\begin{eqnarray*}
F_{r^\text{Ca}}(u)&=&\frac{1}{C}\left[\frac{1}{N_\text{K}}\sum_{i\in\ZZ_\text{K}}\frac{\alpha_\text{K}(u(z_i))}{\alpha_\text{K}(u(z_i))+\beta_\text{K}(u(z_i))}c_{\text{K}}(v_{\text{K}}-u(z_i))\delta_{z_i}\right.\\
&+&\left.\frac{1}{N_\text{Ca}}\sum_{i\in\ZZ_\text{Ca}}1_{1}(r^\text{Ca}(i))c_{\text{Ca}}(v_{\text{Ca}}-u(z_i))\delta_{z_i}+I\right]
\end{eqnarray*}
In this case the application $\Phi$ of Theorem \ref{DEDP} reads as follows, for $(u,r)\in H\times \RR_\text{K}$,
\[
\Phi(u,r)=\frac{1}{C}\frac{1}{N_\text{K}}\sum_{i\in\ZZ_\text{K}}c_{\text{K}}(v_{\text{K}}-u(z_i))\delta_{z_i}\int_0^\infty \EE_r\left(1_{1}(r^{\text{K},u}_s(i))-\frac{\alpha_\text{K}(u(z_i))}{\alpha_\text{K}(u(z_i))+\beta_\text{K}(u(z_i))}\right)ds
\]
Of course, in this case, the law of $(r^{\text{K},u}_s(i),s\geq0)$ can be fully explicited. After some algebra one obtains
\[
\Phi(u,r)=\frac{1}{C}\frac{1}{N_\text{K}}\sum_{i\in\ZZ_\text{K}}\frac{c_{\text{K}}}{\alpha_\text{K}(u(z_i))+\beta_\text{K}(u(z_i))}(v_{\text{K}}-u(z_i))\left(1_1(r(i))-\frac{\alpha_\text{K}(u(z_i))}{\alpha_\text{K}(u(z_i))+\beta_\text{K}(u(z_i))}\right)\delta_{z_i}
\]
Then the diffusion operator is given for $u\in H$ by
\[
<C^\text{K}(u)\phi,\psi>=\frac{1}{N^2_\text{K}}\sum_{i\in\ZZ_\text{K}}c^2_{\text{K}}(v_{\text{K}}-u(z_i))^2\frac{a_\text{K}(u(z_i))b_\text{K}(u(z_i))}{(\alpha_\text{K}(u(z_i))+\beta_\text{K}(u(z_i)))^3}<\delta_{z_i},\phi><\delta_{z_i},\psi>
\]
for $(\phi,\psi)\in H\times H$. From the above expression, we see that for any $u\in H$, $C^\text{K}$ is not of trace class. However, let us consider, for $t\in[0,T]$
\[
Q_t:\phi\in L^2(I)\mapsto \int_0^te^{\D(t-s)}C^\text{K}(u_s)e^{\D(t-s)}\phi ds
\]
where $(u_s,s\in[0,T])$ is the averaged limit. In the $L^2(I)$-sense, we have, for any $t>0$
\begin{eqnarray*}
&~&\text{Tr }Q_t\\
&=&\sum_{k\geq1}\frac{1}{N^2}\sum_{i\in\ZZ_\text{K}}c^2_{\text{K}}\int_0^te^{-2(k\pi)^2(t-s)}(v_{\text{K}}-u_s(z_i))^2\frac{a_\text{K}(u_s(z_i))b_\text{K}(u_s(z_i))}{(\alpha_\text{K}(u_s(z_i))+\beta_\text{K}(u_s(z_i)))^3}2\sin^2(k\pi z_i)ds\\
&\leq&c^2_{\text{K}}(|v_{\text{K}}|+\sup_{t\in[0,T]}\|u_s\|_H)^2\sup_{y\in\R}\frac{a_\text{K}(y)b_\text{K}(y)}{(\alpha_\text{K}(y)+\beta_\text{K}(y))^3}\sum_{k\geq1}\frac{2}{(k\pi)^2}
\end{eqnarray*}
which is a finite quantity.\\
 We present in Figure \ref{Fig1} numerical simulations of the slow fast Morris-Lecar model with no Calcium current for various $\e>0$. The averaged model (denoted by $\e=0$) and the trace of the diffusion operator are also plotted. We set the calcium current equals to zero in our simulations to emphasize the convergence of the slow-fast spatially extended Morris-Lecar model towards the associated averaged model. See \cite{Morris} Figure 2 for simulations of the deterministic finite dimensional Morris-Lecar system with no calcium current. We observe in Figure \ref{Fig1} that averaging affects the model in several ways. As $\e$ goes to zero, the averaged  number of spikes on a fixed time duration increases until finally form a front wave in the averaged model ($\e=0$). In the same time the intensity of the spikes decreases. Let us also mention the fact that the trace of the diffusion operator is higher in the neighborhood of a spike in accordance to $\cite{WTh}$ where the same phenomenon has been observed for the finite dimensional stochastic Morris-Lecar model.
\begin{figure}
\begin{center}
\begin{tabular}{cc}
\includegraphics[width=7cm]{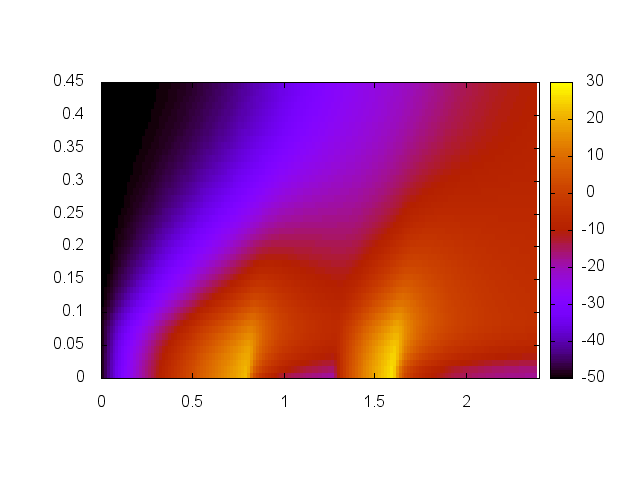}&\includegraphics[width=7cm]{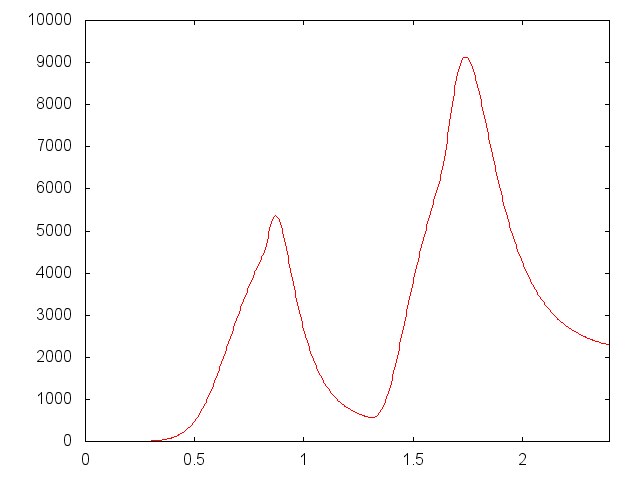}\\
\includegraphics[width=7cm]{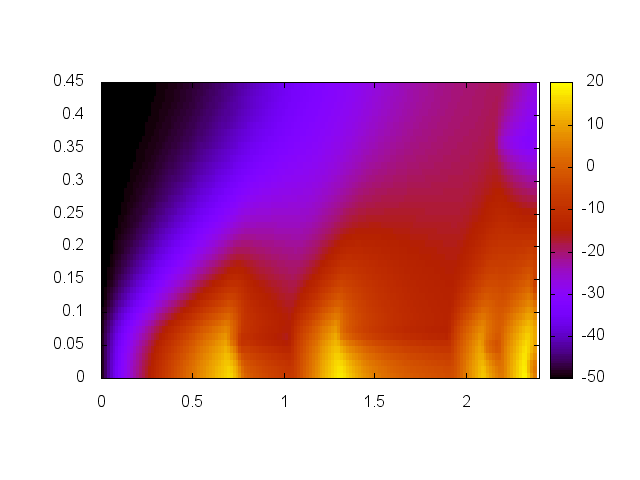}&\includegraphics[width=7cm]{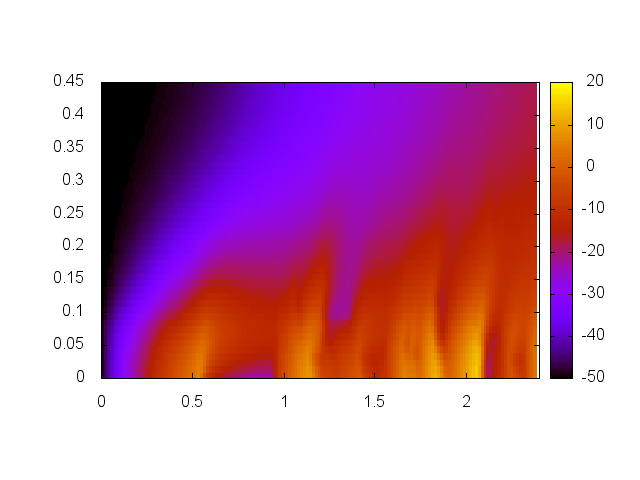}\\
\includegraphics[width=7cm]{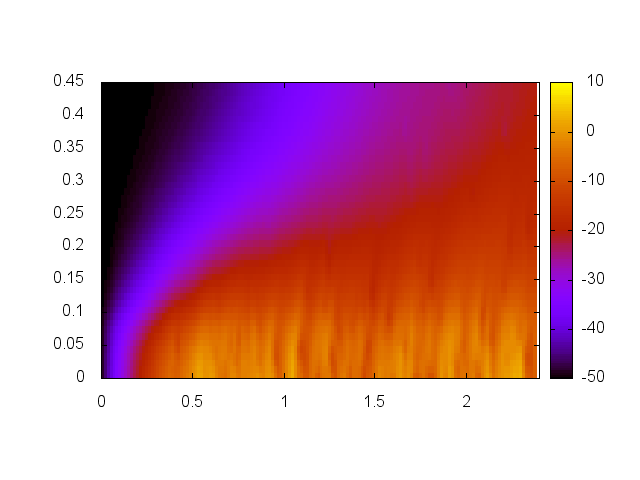}&\includegraphics[width=7cm]{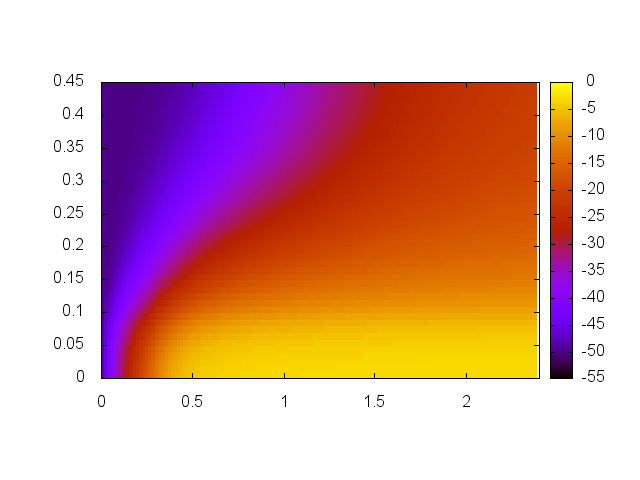}
\end{tabular}
\caption{Simulation of the spatially extended Morris-Lecar model with no Calcium current for $\e$ equal successively to $\e=1$ (up left) $\e=0.1,0.01,0.001,0.0001$ (upper left corner) and $\e=0$ (lower right corner), that is for the averaged model. The plotted curve (up right) corresponds to the corresponding simulation of the Morris Lecar model on its left side, it is the plot of the function $t\mapsto \text{Tr }Q_t$. A stimulus have been injected to the membrane during all the time duration of the simulation on the portion $[0,0.1]$ of the fiber.}\label{Fig1}
\end{center}
\end{figure}

\appendix

\section{Numerical data for the simulations}

Here are the numerical data used for the simulations of the Morris Lecar model
\begin{equation*}
\begin{array}{ccc}
C=1& c_\text{K}=32& v_\text{K}=-70\\
a=1& c_\text{Ca}=0& v_\text{Ca}=0\\
R=0.5& N_\text{K}=50& N_\text{Ca}=0
\end{array}
\end{equation*}
The length of the fiber is $l=0.5$ and the time duration is $T=2.4$. The impulse $I$ is of the form
\[
I(x,t)=\lambda1_{[0,0.1]}(x)
\]
with $\lambda=300$. The data for the internal resistance $R$ and the capacitance $C$ are arbitraly chosen for the purpose of the simulations. The values for the other parameters correspond to \cite{Morris}.

\end{document}